\newtheorem{theorem}{Theorem}[section]
\newtheorem{lemma}[theorem]{Lemma}
\newtheorem{corollary}[theorem]{Corollary}
\newtheorem{conjecture}[theorem]{Conjecture}
\theoremstyle{definition}
\theoremstyle{remark}
\numberwithin{equation}{section}
\newcommand{\mmod}[1]{\,\,(\text{mod}\,\,#1)}
\def\bfa{{\mathbf a}}
 \def\bfF{{\mathbf F}}
\def\bfh{{\mathbf h}}
\def\bfm{{\mathbf m}}
\def\bfn{{\mathbf n}}
\def\bfv{{\mathbf v}}
\def\bfw{{\mathbf w}}
\def\bfx{{\mathbf x}}
\def\bfy{{\mathbf y}}
\def\bfz{{\mathbf z}}
\def\calB{{\mathcal B}}
\def\calP{{\mathcal P}}
\def\calR{{\mathcal R}}
\def\gtil{\tilde{g}}\def\ghat{\hat g}
\def\ftil{\widetilde{f}}
\def\dbC{{\mathbb C}}\def\dbD{{\mathbb D}}
\def\dbN{{\mathbb N}}
\def\dbR{{\mathbb R}}
\def\dbZ{{\mathbb Z}}\def\dbQ{{\mathbb Q}}
\def\gra{{\mathfrak a}}
\def\bfgra{{\boldsymbol \gra}}
\def\grb{{\mathfrak b}}\def\grB{{\mathfrak B}}
\def\bfgrb{{\boldsymbol \grb}}
\def\grc{{\mathfrak c}}
\def\grf{{\mathfrak f}}\def\grF{{\mathfrak F}}
\def\grg{{\mathfrak g}}
 \def\grG{{\mathfrak G}}
\def\grH{{\mathfrak H}}
\def\grJ{{\mathfrak J}}
\def\grm{{\mathfrak m}}\def\grM{{\mathfrak M}}
\def\grS{{\mathfrak S}}
\def\grB{{\mathfrak B}}
\def\alp{{\alpha}} \def\bfalp{{\boldsymbol \alpha}}
\def\bet{{\beta}}  \def\bfbet{{\boldsymbol \beta}}
\def\gam{{\gamma}} 
\def\del{{\delta}} \def\Del{{\Delta}}
\def\zet{{\zeta}} \def\bfzet{{\boldsymbol \zeta}} 
\def\tet{{\theta}} \def\bftet{{\boldsymbol \theta}} 
\def\lam{{\lambda}} \def\Lam{{\Lambda}} 
\def\bfxi{{\boldsymbol \xi}}
\def\Ups{{\Upsilon}} 
\def\ome{{\omega}} \def\Ome{{\Omega}}
\def\d{{\partial}}
\def\eps{\varepsilon}
\def\le{\leqslant} \def\ge{\geqslant}
\def\d{{\,{\rm d}}}
\def\llbracket{\lbrack\;\!\!\lbrack} \def\rrbracket{\rbrack\;\!\!\rbrack}
\begin{document}
\title[Discrete Fourier restriction]{Discrete Fourier restriction via efficient congruencing: 
basic principles}
\author[Trevor D. Wooley]{Trevor D. Wooley}
\address{School of Mathematics, University of Bristol, University Walk, Clifton, 
Bristol BS8 1TW, United Kingdom}
\email{matdw@bristol.ac.uk}
\subjclass[2010]{42B05, 11L07, 42A16}
\keywords{Fourier series, exponential sums, restriction theory, Strichartz inequalities}
\date{}
\begin{abstract} We show that whenever $s>k(k+1)$, then for any complex sequence 
$(\gra_n)_{n\in \dbZ}$, one has 
$$\int_{[0,1)^k}\biggl| \sum_{|n|\le N}\gra_ne(\alp_1n+\ldots +\alp_kn^k)
\biggr|^{2s}\d\bfalp \ll N^{s-k(k+1)/2}\biggl( \sum_{|n|\le N}|\gra_n|^2\biggr)^s.$$
Bounds for the constant in the associated periodic Strichartz inequality from $L^{2s}$ to 
$l^2$ of the conjectured order of magnitude follow, and likewise for the constant in the 
discrete Fourier restriction problem from $l^2$ to $L^{s'}$, where $s'=2s/(2s-1)$. These 
bounds are obtained by generalising the efficient congruencing method from Vinogradov's 
mean value theorem to the present setting, introducing tools of wider application into the 
subject.
\end{abstract}
\maketitle

\section{Introduction} Our goal in this paper is to introduce a flexible new method for 
analysing a wide class of Fourier restriction problems, illustrating our approach in this first 
instance with a model example. In order to set the scene, consider a natural 
number $k$ and a function $g:(\dbR/\dbZ)^k\rightarrow \dbC$ having an associated 
Fourier series
\begin{equation}\label{1.1}
\gtil(\alp_1,\ldots ,\alp_k)=\sum_{\bfn\in \dbZ^k}\ghat(n_1,\ldots ,n_k)
e(n_1\alp_1+\ldots +n_k\alp_k).
\end{equation}
Here, we permit the Fourier coefficients $\ghat(\bfn)$ to be arbitrary complex numbers, 
and as usual write $e(z)=e^{2\pi i z}$. Beginning with work of Stein (see 
\cite{Ste1970}, and \cite{BCT2006} and \cite{Tao2004} for more recent broader context), 
there is by now an extensive body of research concerning the norms of 
operators restricting such Fourier series to integral points $\bfn$ lying on manifolds of 
various dimensions. Thus, for example, in work concerning the non-linear 
Schr\"odinger and KdV equations, Bourgain \cite{Bou1993a,Bou1993b} has considered 
the situation with $k=2$ and the restriction $\bfn=(n,n^l)$ to the Fourier series
$$\calR g=\sum_{n\in \dbZ}\ghat(n,n^l)e(n\alp_1+n^l\alp_2)\quad (l=2,3),$$
as well as higher dimensional analogues (see \cite{HL2013, HL2014} for recent work on 
these problems). Such results have also found recent application in 
additive combinatorics in work concerning the solutions of translation invariant equations 
with variables restricted to dense subsets of the integers (see \cite{Hen2014, Kei2014, 
Kei2015}).\par

We now focus on the example central to this paper, namely that in which the integral 
points $\bfn$ are restricted to lie on the curve $(n,n^2,\ldots ,n^k)$. When 
$(\gra_n)_{n\in \dbZ}$ is a sequence of complex numbers, write
\begin{equation}\label{1.2}
f_\bfgra (\bfalp ;N)=\sum_{|n|\le N}\gra_ne(\alp_1n+\alp_2n^2+\ldots +\alp_kn^k).
\end{equation}
Our goal is to obtain the periodic Strichartz inequality
\begin{equation}\label{1.3}
\| f_\bfgra(\bfalp;N)\|_{L^p}\le K_{p,N}\|\gra_n\|_{l^2}\quad (p\ge 2),
\end{equation}
with the sharpest attainable constant $K_{p,N}$, uniformly in $(\gra_n)$. By duality, this 
problem is related to the discrete restriction problem of obtaining the sharpest attainable 
constant $A_{p,N}$ for which
\begin{equation}\label{1.4}
\sum_{|n|\le N}|\ghat(n,n^2,\ldots ,n^k)|^2\le A_{p,N}\|g\|^2_{p'}\quad (p\ge 2),
\end{equation}
where $g:(\dbR/\dbZ)^k\rightarrow \dbC$, and $p'=p/(p-1)$. Indeed, one has 
$K_{p,N}\sim A_{p,N}^{1/2}$.\par

By adapting the efficient congruencing method introduced in work \cite{Woo2012} of the 
author associated with Vinogradov's mean value theorem, we obtain in \S\S6 and 7 the 
following conclusion.

\begin{theorem}\label{theorem1.1}
Suppose that $k\ge 2$ and $s\ge k(k+1)$. Then for any $\eps>0$, and any complex 
sequence $(\gra_n)_{n\in \dbZ}$, one has\footnote{We employ the convention that 
whenever $G:[0,1)^k\rightarrow \dbC$ is integrable, then $\oint G(\bfalp)\d\bfalp 
=\int_{[0,1)^k}G(\bfalp)\d\bfalp$. Moreover, constants implicit in Vinogradov's notation 
$\ll$ and $\gg $ may depend on $s$, $k$ and $\eps$.} 
\begin{equation}\label{1.5}
\oint |f_\bfgra(\bfalp;N)|^{2s}\d\bfalp \ll N^{s-k(k+1)/2+\eps}\biggl( \sum_{|n|\le N}
|\gra_n|^2\biggr)^s.
\end{equation}
Moreover, when $s>k(k+1)$, one may take $\eps=0$.
\end{theorem}

By way of comparison, we note that when $k=2$, Bourgain \cite{Bou1993a} has obtained 
the estimate
$$\oint |f_\bfgra(\bfalp;N)|^s\d\bfalp \ll 
N^\eps (1+N^{s-3})\biggl( \sum_{|n|\le N}|\gra_n|^2\biggr)^s,$$
in which the factor $N^\eps$ may be deleted whenever $s\ne 6$. Indeed, Bourgain shows 
that when $s=6$ this factor may be replaced by one of the shape $\exp(c\log N/\log \log 
N)$, and that it cannot be deleted. Also, forthcoming work of Kevin Hughes 
\cite{Hug2015} delivers a similar conclusion to that of Theorem \ref{theorem1.1}, though 
only for values of $s$ roughly twice as large as demanded by our new theorem.\par

Recall (\ref{1.3}) and (\ref{1.4}). In \S8 we show that the conclusion of Theorem 
\ref{theorem1.1} yields the following corollary providing bounds for $K_{p,N}$ and 
$A_{p,N}$.

\begin{corollary}\label{corollary1.2} When $k\ge 2$, $p\ge 2k(k+1)$ and $\eps>0$, one 
has
$$K_{p,N}\ll N^{(1-\tet)/2+\eps}\quad \text{and}\quad A_{p,N}\ll N^{1-\tet+\eps},$$
where we write $\tet=k(k+1)/p$. Moreover, provided that $p>2k(k+1)$, one may take 
$\eps=0$.\end{corollary}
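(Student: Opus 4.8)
The plan is to derive both estimates from Theorem~\ref{theorem1.1} by interpolation, using the trivial bound at one endpoint. First I would set $s_0=k(k+1)$ and record the two basic facts about $f_\bfgra(\bfalp;N)$: on one hand, Theorem~\ref{theorem1.1} gives, for every $s\ge s_0$ and every $\eps>0$,
\begin{equation*}
\oint |f_\bfgra(\bfalp;N)|^{2s}\d\bfalp \ll N^{s-k(k+1)/2+\eps}\bigl( \textstyle\sum_{|n|\le N}|\gra_n|^2\bigr)^s,
\end{equation*}
with $\eps$ deletable when $s>s_0$; on the other hand, the trivial $L^\infty$ bound $|f_\bfgra(\bfalp;N)|\le \sum_{|n|\le N}|\gra_n|$ together with Cauchy--Schwarz yields $\|f_\bfgra(\cdot;N)\|_{L^\infty}\le (2N+1)^{1/2}\|\gra_n\|_{l^2}$. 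The point is that for $p\ge 2k(k+1)=2s_0$ we may write $f$ in $L^p$ as a geometric-mean interpolant of $L^{2s_0}$ and $L^\infty$: with $1/p=\tet'/(2s_0)+(1-\tet')/\infty$ we get $\tet'=2s_0/p=2k(k+1)/p=2\tet$, so by H\"older's inequality (or the Riesz--Thorin-type interpolation of $L^p$ norms)
\begin{equation*}
\|f_\bfgra(\cdot;N)\|_{L^p}\le \|f_\bfgra(\cdot;N)\|_{L^{2s_0}}^{2\tet}\,\|f_\bfgra(\cdot;N)\|_{L^\infty}^{1-2\tet}.
\end{equation*}

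Next I would substitute the two inputs. From the theorem with $s=s_0$ one has $\|f_\bfgra(\cdot;N)\|_{L^{2s_0}}\ll N^{(s_0-k(k+1)/2)/(2s_0)+\eps}\|\gra_n\|_{l^2}=N^{1/4+\eps}\|\gra_n\|_{l^2}$, since $s_0-k(k+1)/2=k(k+1)/2=s_0/2$. Inserting this and the trivial bound,
\begin{equation*}
\|f_\bfgra(\cdot;N)\|_{L^p}\ll \bigl(N^{1/4+\eps}\bigr)^{2\tet}\bigl(N^{1/2}\bigr)^{1-2\tet}\|\gra_n\|_{l^2}=N^{\tet/2+\,1/2-\tet+2\tet\eps}\|\gra_n\|_{l^2}=N^{(1-\tet)/2+\eps'}\|\gra_n\|_{l^2},
\end{equation*}
which is exactly the claimed bound $K_{p,N}\ll N^{(1-\tet)/2+\eps}$ after renaming $\eps$. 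For the endpoint refinement: when $p>2k(k+1)$ we have $\tet<1/2$, and I would want to avoid using the $s=s_0$ case of the theorem (which still carries the $\eps$). Instead, choose $s$ slightly larger than $s_0$, say $s=s_0+\del$ with $\del>0$ small, apply the $\eps=0$ version of Theorem~\ref{theorem1.1} at that $s$, and interpolate $L^p$ between $L^{2s}$ and $L^\infty$; since $2s<p$ is still available for $\del$ small enough (as $p>2s_0$ strictly), the same computation goes through with no $\eps$, the loss from $\del$ being absorbed into a slightly worse—but still admissible—exponent, and then letting $\del\to0$ recovers $N^{(1-\tet)/2}$ exactly. Finally, the bound for $A_{p,N}$ follows immediately from $K_{p,N}\sim A_{p,N}^{1/2}$, recorded in the text after~(\ref{1.4}): squaring gives $A_{p,N}\ll N^{1-\tet+\eps}$, with $\eps=0$ when $p>2k(k+1)$.

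The computation itself is routine; the only genuine point requiring care is the endpoint $\eps=0$ claim. The subtlety is that at $p=2k(k+1)$ one is forced onto the critical exponent $s=k(k+1)$ where Theorem~\ref{theorem1.1} genuinely loses $N^\eps$, whereas for $p>2k(k+1)$ one has room to move the interpolation endpoint strictly above criticality and invoke the clean bound. Making that ``room'' argument precise—checking that $2s<p$ can be arranged while keeping the interpolation weights legitimate and that the resulting exponent of $N$ tends to $(1-\tet)/2$ as $\del\to 0$—is where I would be most careful, though it presents no real obstacle. One should also note the duality assertion $K_{p,N}\sim A_{p,N}^{1/2}$ is used as a black box here, being stated in the introduction; if a self-contained treatment were wanted, one would additionally unwind the standard $T T^*$ argument relating (\ref{1.3}) and (\ref{1.4}).
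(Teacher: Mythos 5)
Your argument is correct, but it follows a somewhat different route from the paper's, so a comparison is in order. For the bound on $K_{p,N}$ the paper simply applies Theorem \ref{theorem1.1} directly with $2s=p$ (the theorem being stated for all real $s\ge k(k+1)$), obtaining $\oint|f_\bfgra(\bfalp;N)|^p\d\bfalp\ll N^{(p-k(k+1))/2+\eps}\bigl(\sum_{|n|\le N}|\gra_n|^2\bigr)^{p/2}$ and taking $p$-th roots; the $\eps=0$ claim for $p>2k(k+1)$ is then inherited at once from the $\eps=0$ case of the theorem. Your interpolation of $L^p$ between $L^{2k(k+1)}$ and the trivial $L^\infty$ bound reaches the same exponent and has the virtue of using the theorem only at (or just above) the critical exponent, but it is a longer path given that the theorem already covers every $s\ge k(k+1)$. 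On your endpoint step, note that the caution about a ``loss from $\del$'' is unnecessary: for any fixed $s$ with $k(k+1)<s\le p/2$ the exponent arithmetic is exact, namely $\tfrac{2s}{p}\cdot\tfrac{s-k(k+1)/2}{2s}+\bigl(1-\tfrac{2s}{p}\bigr)\tfrac12=\tfrac12-\tfrac{k(k+1)}{2p}=(1-\tet)/2$, so no limit $\del\to0$ is needed. The more substantive divergence is in the treatment of $A_{p,N}$: you invoke $K_{p,N}\sim A_{p,N}^{1/2}$ as a black box, whereas the paper proves the needed inequality $A_{p,N}\ll K_{p,N}^2$ explicitly, writing $\sum_{|n|\le N}|\ghat(n,\ldots,n^k)|^2=\oint g_0(\bfalp)\gtil(-\bfalp)\d\bfalp$, applying H\"older with exponents $p$ and $p'$, bounding $\|g_0\|_p$ by Theorem \ref{theorem1.1} applied to the sequence $\ghat(n,\ldots,n^k)$, and disentangling. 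Since the equivalence is only asserted, not proved, in the introduction, supplying this two-line duality computation (as you indicate one would) is what makes the deduction self-contained; with that included, your proof is complete and correct.
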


Finally, a straightforward argument in \S8 conveys us from the estimates provided by 
Theorem \ref{theorem1.1} to the bounds recorded in the following corollary.

\begin{corollary}\label{corollary1.3}
Suppose that $t\ge 1$ and that $k_1,\ldots ,k_t$ are positive integers with $1\le 
k_1<k_2<\ldots <k_t=k$. Let $s\ge k(k+1)$, and write $K=k_1+\ldots +k_t$. Then for 
any $\eps>0$, and any complex sequence $(\gra_n)$, one has
$$\oint \biggl| \sum_{|n|\le N}\gra_ne(\alp_1n^{k_1}+\ldots +\alp_tn^{k_t})
\biggr|^{2s}\d\bfalp \ll N^{s-K+\eps}\biggl( \sum_{|n|\le N}|\gra_n|^2\biggr)^s.$$
Moreover, when $s>k(k+1)$, one may take $\eps=0$.
\end{corollary}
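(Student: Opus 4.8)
The plan is to deduce the general multidegree estimate from the full-degree case already established in Theorem~\ref{theorem1.1}. The key point is that although the exponential sum in Corollary~\ref{corollary1.3} involves only the exponents $k_1<\ldots <k_t=k$, one can view its $2s$-th moment as a slice, or a specialisation, of the $2s$-th moment of the full sum $f_\bfgra(\bfalp;N)$ in which the coefficients of the missing powers $n^j$ (for $j\notin\{k_1,\ldots,k_t\}$) have been integrated out. Concretely, write $\bfbet=(\bet_{k_1},\ldots,\bet_{k_t})$ for the relevant $t$-tuple of variables, and introduce auxiliary variables $\gam_j$ for $j\in\{1,\ldots,k\}\setminus\{k_1,\ldots,k_t\}$. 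First I would use the trivial observation that integrating $|f_\bfgra(\cdot;N)|^{2s}$ over the full torus $[0,1)^k$ is an integral over all $k$ coordinates, and that holding $\bfbet$ fixed while integrating over the remaining $k-t$ coordinates $\gam_j$ can only decrease things by a bounded amount; more precisely, by Fubini and the nonnegativity of the integrand, there exists a choice of the $\gam_j$ for which
\begin{equation*}
\oint_{[0,1)^t}\biggl|\sum_{|n|\le N}\gra_n e\Bigl(\bet_{k_1}n^{k_1}+\ldots+\bet_{k_t}n^{k_t}+\sum_{j}\gam_j n^j\Bigr)\biggr|^{2s}\d\bfbet \le \oint_{[0,1)^k}|f_\bfgra(\bfalp;N)|^{2s}\d\bfalp.
\end{equation*}
The issue is that the left side contains the extra phase $\sum_j\gam_j n^j$, which we do not want. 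This is removed by the standard device of absorbing the fixed unimodular factors $e(\sum_j\gam_j n^j)$ into the coefficients: replacing $\gra_n$ by $\gra_n e(\sum_j\gam_j n^j)$ changes neither $\sum_{|n|\le N}|\gra_n|^2$ nor the hypotheses, since the bound in Theorem~\ref{theorem1.1} is uniform over all complex sequences. Hence the left-hand integral above, for the modified coefficients, is exactly the mean value appearing in Corollary~\ref{corollary1.3}, and it is bounded by $N^{s-k(k+1)/2+\eps}(\sum|\gra_n|^2)^s$.

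It remains to upgrade the exponent from $s-k(k+1)/2+\eps$ to $s-K+\eps$, where $K=k_1+\ldots+k_t$. This is where the genuine content lies, and it cannot come from Theorem~\ref{theorem1.1} applied to degree $k$ alone, because $k(k+1)/2\ge K$ with equality only when $\{k_1,\ldots,k_t\}=\{1,2,\ldots,k\}$. The fix is to run the very same reduction but starting from an auxiliary sum that already has the right ``total degree'' $K$: one should apply Theorem~\ref{theorem1.1} (or rather its natural analogue) in an ambient space whose dimension matches $K$. The cleanest route is to exploit the scaling/dilation structure: for each block one separates the curve $(n^{k_1},\ldots,n^{k_t})$ into its homogeneous pieces and uses the translation-dilation invariance of the mean value together with a dyadic/localisation argument, so that the relevant count is governed by the number of solutions of the simultaneous system $\sum (\pm n_i^{k_r})=0$ for $r=1,\ldots,t$, which by a Newton's-identities / Vandermonde argument is comparable to that for the system of all powers up to the appropriate degree after a change of variables. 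In effect, one shows that the mean value for the exponents $k_1<\ldots<k_t$ is majorised by $N^{s-K+\eps}$ times $(\sum|\gra_n|^2)^s$ directly from the corresponding full-degree-$K$ statement applied to a shorter curve, combined with the coefficient-absorption trick above to handle the ``extra'' directions as harmless unimodular weights.

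The main obstacle, then, is not the passage from weighted to unweighted coefficients (routine, as above) but rather arranging the dimension bookkeeping so that the exponent $K$, and not $k(k+1)/2$, governs the final bound — i.e. identifying the correct auxiliary curve and verifying that the system of equations it produces has the same (up to $N^\eps$) number of solutions as the one counted by Theorem~\ref{theorem1.1} at the matching total degree. Once that combinatorial equivalence of solution counts is in place, the rest is an application of Theorem~\ref{theorem1.1}, the Fubini slicing, and the absorption of phases into coefficients, and the claim with $\eps=0$ for $s>k(k+1)$ follows from the corresponding clause of the theorem since none of these manipulations loses more than the $N^\eps$ already present there.
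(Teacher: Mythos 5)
Your proposal does not reach a proof, and the two issues are worth separating. First, the Fubini/pigeonhole step in your opening paragraph has a direction problem: averaging over the auxiliary phases $\gam_j$ only shows that for \emph{some} choice of $\bfgam$ (depending on the sequence) the $t$-dimensional mean value of the \emph{modified} sequence $\gra_n e\bigl(\sum_j \gam_j n^j\bigr)$ is at most the full $k$-dimensional mean value; it gives no bound for the sequence you started with, because the $t$-dimensional integral, unlike the $k$-dimensional one, is not invariant under multiplying the coefficients by phases in the missing powers, and you cannot pre-compensate for a $\bfgam$ you only obtain after the pigeonhole. This defect is moot, however, since as you note that route could at best produce the exponent $s-\tfrac{1}{2}k(k+1)$, not $s-K$.

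The genuine gap is the step you yourself label the ``main obstacle'': the passage from $\tfrac{1}{2}k(k+1)$ to $K$ is left as an unproven ``combinatorial equivalence'' between the solution count for the exponents $k_1<\ldots<k_t$ and a full Vinogradov-type count of matching total degree, to be obtained by a change of variables, Newton's identities and dilation structure; no such equivalence is established, and nothing in Theorem \ref{theorem1.1} supplies it. The paper's argument is much simpler and avoids any auxiliary curve. By orthogonality the corollary's mean value counts (weighted) solutions of $\sum_{i=1}^s(x_i^{k_m}-y_i^{k_m})=0$ $(1\le m\le t)$ with $|\bfx|,|\bfy|\le N$. For any such solution the power sums at the missing exponents $l_1<\ldots<l_u$ (those with $\{k_1,\ldots,k_t\}\cup\{l_1,\ldots,l_u\}=\{1,\ldots,k\}$) take integer values $h_{l_j}$ with $|h_{l_j}|\le 2sN^{l_j}$, so the mean value equals $\sum_{\bfh}I(\bfh)$, where $I(\bfh)=\oint |f_\bfgra(\bfbet;N)|^{2s}e(-h_1\bet_1-\ldots-h_k\bet_k)\d\bfbet$, the sum runs over $O\bigl(N^{l_1+\ldots+l_u}\bigr)$ vectors $\bfh$, and the triangle inequality gives $|I(\bfh)|\le \oint |f_\bfgra(\bfbet;N)|^{2s}\d\bfbet$. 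Since $l_1+\ldots+l_u=\tfrac{1}{2}k(k+1)-K$, Theorem \ref{theorem1.1} then yields the bound $N^{s-K+\eps}\bigl(\sum_{|n|\le N}|\gra_n|^2\bigr)^s$, with $\eps=0$ permissible for $s>k(k+1)$: the price of the missing equations is exactly the number of possible values of the unconstrained power sums, which is the factor $N^{\frac{1}{2}k(k+1)-K}$ your sketch was trying, by much harder means, to avoid paying.
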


Bounds of the shape supplied by Theorem \ref{theorem1.1} are closely related to those 
available in Vinogradov's mean value theorem, which corresponds to the special case in 
which $(\gra_n)=(1)$. In the latter circumstances, one has the lower bound
\begin{equation}\label{1.6}
\oint \biggl| \sum_{|n|\le N}e(\alp_1n+\ldots +\alp_k n^k)\biggr|^{2s}\d\bfalp \gg 
N^s+N^{2s-k(k+1)/2}.
\end{equation}
It is therefore of interest to determine the least number $s_0=s_0(k)$ for which, for any 
$\eps>0$, one has the corresponding upper bound
$$\oint \biggl| \sum_{|n|\le N}e(\alp_1n+\ldots +\alp_k n^k)\biggr|^{2s}\d\bfalp \ll 
N^{2s-k(k+1)/2+\eps}.$$
The classical work of Vinogradov \cite{Vin1947} and Hua \cite{Hua1949} shows that one 
may take $s_0(k)\le 3k^2(\log k+O(\log \log k))$ for large $k$ (see Vaughan 
\cite[Theorem 7.4]{Vau1997}, for example). Very recently, with the arrival of the efficient 
congruencing method, such conclusions have become available for all $k\ge 2$ with 
$s_0(k)\le k(k+1)$ (see Wooley \cite[Theorem 1.1]{Woo2012}). The very latest 
refinements of such work \cite[Theorem 1.2]{Woo2014c} show that one may even take 
$s_0(k)\le k(k-1)$ whenever $k\ge 3$.\par

Following the discussion of the last paragraph, we are equipped to describe some 
consequences of forthcoming work \cite{Hug2015} of Kevin Hughes. This delivers a 
conclusion of the shape (\ref{1.5}) provided that $s\ge 2s_0(k)$, showing also that one 
may take $\eps=0$ for $s>2s_0(k)$. One can interpret Hughes' method as bounding 
mean values of $f_\bfgra(\bfalp;N)$ in terms of corresponding mean values of classical 
Vinogradov type with half as many underlying variables. This work demands essentially 
twice as many variables as are required in our Theorem \ref{theorem1.1}. In this paper, 
we work directly with mean values of $f_\bfgra(\bfalp;N)$, avoiding any reference to 
the classical version of Vinogradov's mean value theorem, and avoiding the losses 
inherent in previous approaches.\par

When $s>k(k+1)$, in the special case of the sequence $(\gra_n)=(1)$, the lower bound 
(\ref{1.6}) confirms that the upper bound (\ref{1.5}) of Theorem \ref{theorem1.1} is 
sharp (in which case one may take $\eps=0$). Indeed, by reference to this same special 
sequence, one may formulate the following conjecture.

\begin{conjecture}[Main Conjecture]\label{conjecture1.4} Suppose that $k\ge 1$. Then for 
any $\eps>0$, and any complex sequence $(\gra_n)$, one has
$$\oint |f_\bfgra (\bfalp;N)|^{2s}\d\bfalp \ll N^\eps \left(1+N^{s-k(k+1)/2}\right)
\biggl( \sum_{|n|\le N}|\gra_n|^2\biggr)^s.$$
\end{conjecture}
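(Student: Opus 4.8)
The plan has two parts: a soft reduction to the single critical exponent $s_0=k(k+1)/2$, followed by the genuinely hard task of establishing the estimate at that exponent. For the reduction, write $s_0=k(k+1)/2$ and suppose one knew the \emph{critical} bound
$$\oint |f_\bfgra(\bfalp;N)|^{2s_0}\d\bfalp \ll N^\eps\biggl( \sum_{|n|\le N}|\gra_n|^2\biggr)^{s_0}.$$
For the supercritical range $s>s_0$, Cauchy--Schwarz gives $|f_\bfgra(\bfalp;N)|\le (2N+1)^{1/2}(\sum_{|n|\le N}|\gra_n|^2)^{1/2}$, whence $|f_\bfgra(\bfalp;N)|^{2s}\le ((2N+1)\sum_{|n|\le N}|\gra_n|^2)^{s-s_0}|f_\bfgra(\bfalp;N)|^{2s_0}$; integrating and inserting the critical bound yields $\oint |f_\bfgra|^{2s}\d\bfalp \ll N^{s-s_0+\eps}(\sum |\gra_n|^2)^s$, which is the conjectured bound since $s-s_0=s-k(k+1)/2$. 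For the subcritical range $1\le s\le s_0$, Hölder's inequality with exponents $s_0/s$ and $s_0/(s_0-s)$ applied to $|f_\bfgra|^{2s}\cdot 1$, together with $\oint 1\,\d\bfalp=1$, gives $\oint |f_\bfgra|^{2s}\d\bfalp \le (\oint |f_\bfgra|^{2s_0}\d\bfalp)^{s/s_0}\ll N^\eps(\sum|\gra_n|^2)^s$; since $1+N^{s-k(k+1)/2}\asymp 1$ in this range, this too is the conjectured bound. Thus the conjecture is equivalent to its critical case, and everything rests on the displayed estimate.

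The substance is then to prove that critical estimate by driving the efficient congruencing machinery of \S\S6 and 7 down from the range $s\ge k(k+1)$ covered by Theorem \ref{theorem1.1} to $s=k(k+1)/2$. I would re-run the congruencing iteration with the weights $(\gra_n)$ carried through unchanged, but with much closer attention to the power of $N$ relinquished at each stage: when $s\ge k(k+1)$ the congruencing step has slack to spare, whereas at the critical exponent it must be essentially lossless over long stretches of the iteration. This points towards a multigrade version of the argument, in which the small gains accruing from several consecutive congruencing steps are amalgamated before any loss is booked, rather than the single-step balance used to prove Theorem \ref{theorem1.1}. The bound $s_0(k)\le k(k-1)$ valid for $k\ge 3$, quoted above, is evidence that at least partial descent below $k(k+1)$ is available by such refinements; the aim is the exact critical value $k(k+1)/2$.

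The main obstacle is precisely this factor-of-two gap in $s$. The critical weighted estimate is a restriction-strength inequality --- it is of equivalent strength to sharp $\ell^2$ decoupling for the curve $(n,n^2,\ldots,n^k)$, and is stronger than the corresponding unweighted mean value --- and in the model case $(\gra_n)=(1)$ it contains the Main Conjecture for Vinogradov's mean value theorem at its critical exponent, which for $k\ge 4$ lies beyond what the basic efficient congruencing principles introduced here can reach, only the cases $k\le 3$ being currently accessible even in the unweighted setting. I therefore expect that closing the gap needs genuinely new input --- a substantially more efficient (nested or multigrade) congruencing iteration that remains lossless at the critical exponent, or the importation of decoupling technology --- and not merely a careful re-optimisation of the estimates of this paper. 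A secondary but routine point is to check that whichever critical iteration one adopts continues to respect the $\ell^2$-normalisation of the weights; this should go through as in \S\S6 and 7, since the weights enter the congruencing relations only passively.
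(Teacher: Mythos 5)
The statement you are addressing is the paper's Main Conjecture (Conjecture \ref{conjecture1.4}); the paper itself offers no proof of it, only partial results: the trivial range $1\le s\le k$ via Newton's formulae, the case $s=k+1$ ``with work'', the range $1\le s\le 2k-1$ by Bourgain--Demeter, the supercritical range $s\ge k(k+1)$ via Theorem \ref{theorem1.1}, and an announced extension to $1\le s\le \tfrac12 k(k+1)-(\tfrac13+o(1))k$ deferred to future work. Your opening reduction is correct and coincides with the paper's own remark that confirming the conjecture for $1\le s\le k(k+1)/2$ confirms it for all $s\ge 1$: the supercritical step via $|f_\bfgra(\bfalp;N)|^{2s}\le \bigl((2N+1)\sum_{|n|\le N}|\gra_n|^2\bigr)^{s-s_0}|f_\bfgra(\bfalp;N)|^{2s_0}$ and the subcritical step via H\"older on the probability space $[0,1)^k$ are both sound.

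However, what remains after that reduction is the entire content of the conjecture, and your proposal does not supply it. The second and third paragraphs are a programme, not an argument: you correctly observe that the congruencing iteration of \S\S4--6 is engineered for $s=k^2$ (equivalently $2s\ge 2k(k+1)$ underlying variables) and loses a factor of roughly two in $s$ relative to the critical exponent, and you correctly judge that closing this gap requires either a substantially stronger (multigrade or nested) congruencing scheme or decoupling input --- but no such scheme is constructed, no lemma replacing Lemma \ref{lemma5.3} at the critical exponent is formulated, and no estimate is proved below $s=k(k+1)$. So the proposal establishes only the (easy, and already-noted) equivalence of the conjecture with its critical case; the critical case itself is left open, exactly as it is in the paper. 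As a smaller point, the assertion that the critical weighted estimate is ``of equivalent strength to'' sharp $\ell^2$ decoupling for the curve $(n,n^2,\ldots,n^k)$ is stated without justification; decoupling implies the discrete restriction estimate, but the converse implication is not something you may assume.
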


By orthogonality, the mean value on the left hand side here is a weighted count of the 
number of integral solutions of the Diophantine system
\begin{equation}\label{1.8}
\sum_{i=1}^sx_i^j=\sum_{i=1}^sy_i^j\quad (1\le j\le k),
\end{equation}
with $|x_i|,|y_i|\le N$. When $1\le s\le k$, it follows from Newton's formulae concerning 
roots of polynomials that $\{x_1,\ldots ,x_s\}=\{y_1,\ldots ,y_s\}$, and thus
$$\oint |f_\bfgra (\bfalp;N)|^{2s}\d\bfalp \ll \biggl( \sum_{|n|\le N}|\gra_n|^2\biggr)^s.$$
Thus, Conjecture \ref{conjecture1.4} holds when $1\le s\le k$ for essentially trivial reasons. 
With work, this line of reasoning can be extended to cover the case $s=k+1$. We note that 
Bourgain and Demeter \cite{BD2014} have confirmed this conjecture in the longer range 
$1\le s\le 2k-1$. By a substantial elaboration of the ideas of this paper, which we intend to 
pursue within a more general framework in a future memoir, we are able to extend this 
range substantially so as to confirm Conjecture \ref{conjecture1.4} in the interval 
$1\le s\le D(k)$, where $D(4)=8,\, D(5)=10,\, \ldots$, and 
$D(k)=\tfrac{1}{2}k(k+1)-(\tfrac{1}{3}+o(1))k$. Note that the confirmation of this 
conjecture for $1\le s\le k(k+1)/2$ would suffice to confirm it for all $s\ge 1$.\par

We have restricted ourselves in this paper to the relatively more straightforward proof of 
Theorem \ref{theorem1.1} so as to make the ideas underlying this generalisation of the 
efficient congruencing method more transparent. We hope, in this way, to permit other 
workers more easily to consider the use of such methods in applications farther afield.\par

Our basic strategy in proving Theorem \ref{theorem1.1} is to adapt to the present setting 
the efficient congruencing method introduced by the author in the context of Vinogradov's 
mean value theorem (see \cite{Woo2012}). Several complications must be surmounted in 
such a plan of attack. First, the presence of arbitrary complex coefficients $\gra_n$ implies 
that the strong translation-dilation invariance present in the setting of Vinogradov's mean 
value theorem is absent. However, we are able to normalise the exponential sums 
$f_\gra(\bfalp;N)$ by a factor $\left( \sum_{|n|\le N}|\gra_n|^2\right)^{-1/2}$ so as to 
achieve scale invariance, and subsequently consider mean values associated with extremal 
sequences. This initial preparation is discussed in \S2, and recovers a sufficient 
approximation to translation invariance that subsequent operations are not impeded.\par

We next impose a congruential condition, modulo an auxiliary prime number $\varpi$, on the 
summands underlying the mean value on the left hand side of (\ref{1.5}). By employing the 
weak translation-dilation invariance present in the underlying Diophantine system, as 
discussed in \S3, we find that a subset of the summands are subject to a strong congruence 
condition of the shape
$$\sum_{i=1}^kx_i^j\equiv \sum_{i=1}^ky_i^j\mmod{\varpi^j}\quad (1\le j\le k).$$
By incorporating a non-singularity condition {\it en passant} in \S4, one extracts in \S5 the 
condition $x_i\equiv y_i\mmod{\varpi^k}$ on the underlying variables. In this way, the 
original mean value is bounded above by a new mean value, both old and new containing 
complex weights, and the new one subject to powerful congruence constraints. By 
appropriate application of H\"older's inequality, this new mean value is bounded above by a 
further mean value encoding still stronger congruence constraints.\par

As in the efficient congruencing method for Vinogradov's mean value theorem, one now 
seeks to utilise this congruence concentration argument. In essence, if the original mean 
value is assumed to be significantly larger than conjectured, then one can show that a 
certain auxiliary mean value is significantly larger still by comparison to its expected size. By 
iterating, one ultimately arrives at a mean value shown to be so much larger than expected, 
that it exceeds even a trivial estimate for its magnitude. Thus one contradicts the initial 
assumption, and one is forced to conclude that the original mean value is of approximately 
the same magnitude as anticipated. This process is discussed in \S6. The additional 
complication associated with this approach concerns the complex weights $\gra_n$. For this 
reason, one must incorporate extra averaging by comparison with the earlier efficient 
congruencing approach.\par

One last matter deserves attention, namely that of the claim to the effect that one may 
take $\eps=0$ in the conclusion of Theorem \ref{theorem1.1} when $s>k(k+1)$. Such a 
conclusion first became available in the special case $k=2$ and $s>3$ in the work of 
Bourgain \cite{Bou1993a}. This approach has been generalised in forthcoming work of 
Kevin Hughes \cite{Hug2015}\footnote{Very recently, the author has been informed by Kevin 
Henriot that he has independently obtained such an $\eps$-removal lemma, and that this will 
appear in his forthcoming paper \cite{Hen2015}.}, so that whenever one has a bound of the 
shape
$$\oint |f_\bfgra (\bfalp;N)|^p\d\bfalp \ll N^{(p-k(k+1))/2+\eps}\biggl( \sum_{|n|\le N}
|\gra_n|^2\biggr)^{p/2},$$
valid for $p\ge p_0(k)$, then the same conclusion holds with $\eps=0$ provided that 
$p>p_0(k)$. Although we could apply Hughes' $\eps$-removal lemma, we have opted in \S7 
instead for a cheap treatment of slightly less generality in order that our account be 
self-contained. This approach is based on the Keil-Zhao device (see \cite{Kei2014} and 
\cite{Zha2014}), and we hope that it may be of independent interest.\par

It will be apparent to experts that the ideas introduced in this paper to surmount the 
difficulties associated with the complex weights $(\gra_n)$ are very flexible. Indeed, it 
seems that there is a metamathematical principle available that, when given a conventional 
unweighted mean value estimate established by a variant of the efficient congruencing 
method, delivers the corresponding estimate equipped with arbitrary complex weights 
through the methods of this paper. We discuss some of the immediate consequences of 
this principle in \S9.\par

The author is very grateful to Kevin Hughes for an inspiring seminar and subsequent 
conversations that provided motivation for this paper.

\section{The infrastructure for efficient congruencing} We initiate our discussion of the 
proof of Theorem \ref{theorem1.1} by introducing the components and basic notation 
required to assemble the weighted efficient congruencing iteration. Although analogous to 
that of our corresponding work \cite{Woo2012} concerning Vinogradov's mean value 
theorem, we are forced to deviate significantly from our earlier path.\par

Let $k$ be a fixed integer with $k\ge 2$, consider a complex sequence $(\gra_n)$, and 
recall the exponential sum $f_\bfgra(\bfalp;N)$ defined in (\ref{1.2}). Our first task is to 
replace the mean value central to Theorem \ref{theorem1.1} by a modification less 
intimately dependent on the sequence $(\gra_n)$. In this context, we remark that in the main 
thrust of our argument, we restrict attention to sequences other than the zero sequence 
$(\gra_n)=(0)$. Define $\rho(\bfgra;N)$ to be $1$ when $(\gra_n)=(0)$, and otherwise by taking
\begin{equation}\label{2.1}
\rho(\bfgra;N)=\biggl( \sum_{|n|\le N}|\gra_n|^2\biggr)^{1/2}.
\end{equation}
We define the normalised exponential sum
\begin{equation}\label{2.2}
\ftil_\bfgra(\bfalp;N)=\rho(\bfgra;N)^{-1}\sum_{|n|\le N}\gra_ne(\psi(n;\bfalp)),
\end{equation}
in which we put
\begin{equation}\label{2.2a}
\psi(n;\bfalp)=\alp_kn^k+\ldots +\alp_1n.
\end{equation}
Then, when $s>0$, we define the mean value
\begin{equation}\label{2.3}
U_{s,k}(N;\bfgra)=\oint |\ftil_\bfgra (\bfalp;N)|^{2s}\d\bfalp .
\end{equation}
A comparison of (\ref{1.2}) and (\ref{2.2}) reveals that
$$\oint |f_\bfgra(\bfalp;N)|^{2s}\d\bfalp \ll \rho(\bfgra;N)^{2s}U_{s,k}(N;\bfgra)=
U_{s,k}(N;\bfgra)\biggl( \sum_{|n|\le N}|\gra_n|^2\biggr)^s.$$
The proof of Theorem \ref{theorem1.1} will therefore be accomplished by establishing that, 
whenever $s\ge k(k+1)$, then for any $\eps>0$ and any complex sequence $(\gra_n)$, 
one has
$$U_{s,k}(N;\bfgra)\ll N^{s-k(k+1)/2+\eps},$$
and further that one may take $\eps=0$ when $s>k(k+1)$. Notice, in this context, that when 
$(\gra_n)=(0)$, then $U_{s,k}(N;\bfgra)=0$, so that we are entitled to ignore the zero 
sequence in subsequent discussion.\par

By applying Cauchy's inequality to (\ref{2.2}), one obtains the bound
$$|\ftil_\bfgra(\bfalp;N)|\le \rho(\bfgra;N)^{-1}\biggl( \sum_{|n|\le N}|\gra_n|^2
\biggr)^{1/2}(2N+1)^{1/2},$$
and thus it follows from (\ref{2.1}) that whenever $N\ge 1$, one has
\begin{equation}\label{2.X}
|\ftil_\bfgra(\bfalp;N)|\le 2N^{1/2},
\end{equation}
uniformly in $(\gra_n)$ and $\bfalp$. In particular, we infer from the definition (\ref{2.3}) 
that $U_{s,k}(N;\bfgra)\ll N^s$, uniformly in $(\gra_n)$. Further, by taking $(\gra_n)=(1)$ 
and applying orthogonality, one finds from (\ref{1.2}) that when $s\in \dbN$, the mean value 
$\oint |f_\bfgra(\bfalp;N)|^{2s}\d\bfalp $ counts the number of integral solutions of the 
system of equations (\ref{1.8}) with $|x_i|,|y_i|\le N$. The contribution of the diagonal 
solutions $x_i=y_i$ $(1\le i\le s)$ ensures in such circumstances that
$$\rho(\bfgra;N)^{-2s}\oint |f_\bfgra(\bfalp;N)|^{2s}\d\bfalp \gg (N^{1/2})^{-2s}N^s
=1.$$
We therefore deduce that when $(\gra_n)=(1)$, then one has $U_{s,k}(N;\bfgra)\gg 1$.\par

The definition of $U_{s,k}(N;\bfgra)$ ensures that it is non-negative for all $N$ and 
$(\gra_n)$. In addition, one sees from (\ref{2.2}) that $\ftil_\bfgra(\bfalp;N)$, and hence 
also $U_{s,k}(N;\bfgra)$, is scale invariant with respect to $(\gra_n)$, meaning that 
$U_{s,k}(N;\gam \bfgra)=U_{s,k}(N;\bfgra)$ for any $\gam>0$. We may therefore 
suppose without loss of generality that, for any fixed value of $N$, one has $|\gra_n|\le 1$ 
for $|n|\le N$. Write $\dbD$ for the unit disc $\{z\in \dbC: |z|\le 1\}$. Then it follows that for 
each fixed $N\ge 1$, one has
$$0\le \sup_{\substack{\gra_n\in \dbD\ (|n|\le N)\\ (\gra_n)\ne (0)}}\frac{\log 
U_{s,k}(N;\bfgra)}{\log N}\le s.$$
Thus, when $s\in \dbN$, we may define the quantity
\begin{equation}\label{2.6}
\lam_{s,k}=\limsup_{N\rightarrow \infty}\sup_{\substack{\gra_n\in \dbD\ (|n|\le N)\\ 
(\gra_n)\ne (0)}}\frac{\log U_{s,k}(N;\bfgra)}{\log N}.
\end{equation}

\par We make one further simplification before proceeding further, observing that there is no 
loss of generality in restricting the supremum in (\ref{2.6}) to be taken over complex 
sequences $(\gra_n)$ all of whose terms are real and positive. For given a complex 
sequence $(\gra_n)$, put $\grb_n=|\gra_n|$ for each $n$. Then it follows from (\ref{2.1}) 
that $\rho(\bfgrb;N)=\rho(\bfgra;N)$. Moreover, by orthogonality, the mean value 
$U_{s,k}(N;\bfgra)$ counts the number of integral solutions of the system of equations 
(\ref{1.8}) with $|x_i|,|y_i|\le N$, each solution $\bfx,\bfy$ being counted with weight
$$\rho(\bfgra;N)^{-2s}\gra_{x_1}\ldots \gra_{x_s}{\overline \gra}_{y_1}\ldots 
{\overline \gra}_{y_s}.$$
This weight has absolute value equal to
\begin{equation}\label{2.6z}
\rho(\bfgrb;N)^{-2s}\grb_{x_1}\ldots \grb_{x_s}{\overline \grb}_{y_1}\ldots 
{\overline \grb}_{y_s},
\end{equation}
and so by reversing the orthogonality argument, we find that
$$U_{s,k}(N;\bfgra)\le U_{s,k}(N;\bfgrb).$$
Moreover, should any sequence element $\grb_n$ be equal to $0$, then the weight 
(\ref{2.6z}) changes by a quantity lying in the interval $[0,\ome)$ when we substitute 
$\grb_n=\ome>0$. By repeating this process for each sequence element, and considering the 
limit as $\ome\rightarrow 0$, it is apparent that
$$\sup_{\substack{\gra_n\in \dbD\ (|n|\le N)\\ (\gra_n)\ne (0)}}
\frac{\log U_{s,k}(N;\bfgra)}{\log N}=\sup_{\grb_n\in (0,1]\ (|n|\le N)}
\frac{\log U_{s,k}(N;\bfgrb)}{\log N},$$
and so we are at liberty to restrict attention throughout the ensuing discussion to sequences 
of positive numbers. In particular, we may replace (\ref{2.6}) by the equivalent definition
$$\lam_{s,k}=\limsup_{N\rightarrow \infty}\sup_{\gra_n\in (0,1]\ (|n|\le N)}
\frac{\log U_{s,k}(N;\bfgra)}{\log N}.$$

\par In what follows, we fix, once and for all, the integer $k\ge 2$, and henceforth omit its 
explicit mention in our notation. Furthermore, we consider a natural number $u$ with 
$u\ge k$, and we put $s=uk$. Our method is iterative, with $N$ iterations, and we suppose 
this integer to be sufficiently large in terms of $s$ and $k$. We then put
\begin{equation}\label{2.7}
\tet=(Nk)^{-3N}\quad \text{and}\quad \del=(N^2s)^{-3N},
\end{equation}
so that, in particular, the parameter $\del$ is small compared to $\tet$. These quantities will 
shortly make an appearance that explains their role in the argument. Rather than discuss 
moments of order $2s$, it is more convenient to consider $U_{s+k}(X;\bfgra)$. The 
definition of $\lam=\lam_{s+k,k}$ in (\ref{2.6}) now ensures that there exists a sequence 
$(X_n)_{n=1}^\infty$ with $\underset{n\rightarrow \infty}{\lim}X_n=+\infty$ such that 
the following two statements hold whenever $m$ is sufficiently large. First, for some sequence 
$(\gra_n)$ of real numbers with $\gra_n\in (0,1]$, one has that
\begin{equation}\label{2.8}
U_{s+k}(X_m;\bfgra)>X_m^{\lam-\del}.
\end{equation}
Second, whenever $X_m^{1/2}\le Y\le X_m$, then for all non-zero complex sequences 
$(\gra'_n)$, one has
\begin{equation}\label{2.9}
U_{s+k}(Y;\bfgra')<Y^{\lam+\del}.
\end{equation}
We focus henceforth on a fixed element $X=X_m$ of the sequence $(X_n)$, which we may 
assume to be sufficiently large in terms of $s$, $k$, $N$ and $\del$. We also fix a real 
sequence $(\gra_n)$ with $\gra_n\in (0,1]$, satisfying (\ref{2.8}). It is convenient in 
the remainder of \S\S2--6 to abbreviate $U_{s+k}(X;\bfgra)$ to $U(X;\bfgra)$, or even to 
$U(X)$.\par

We next recall some standard notational conventions. The letter $\eps$ denotes a sufficiently 
small positive number. We think of the basic parameter as being $X$, a large real number 
depending at most on $\eps$ and the other ambient parameters as indicated. Whenever 
$\eps$ appears in a statement, we assert that the statement holds for each $\eps>0$. As 
usual, we write $\lfloor \psi\rfloor$ to denote the largest integer no larger than $\psi$, and 
$\lceil \psi\rceil$ to denote the least integer no smaller than $\psi$. We make sweeping use of 
vector notation. Thus, with $t$ implied from the environment at hand, we write 
$\bfz\equiv \bfw\mmod{\varpi}$ to denote that $z_i\equiv w_i\mmod{\varpi}$ $(1\le i\le t)$, 
or $\bfz\equiv \xi\mmod{\varpi}$ to denote that $z_i\equiv \xi\mmod{\varpi}$ $(1\le i\le t)$.

\par Congruences to prime power moduli lie at the heart of our argument. Put $M=X^\tet$, 
and note that $X^\del<M^{1/N}$. Let $\varpi$ be a fixed prime number with 
$M<\varpi\le 2M$ to be chosen in due course. That such a prime exists is a consequence of 
the Prime Number Theorem. When $c$ and $\xi$ are non-negative integers, we define 
$\rho_c(\xi)=\rho_c(\xi;\bfgra)$ by putting
\begin{equation}\label{2.10}
\rho_c(\xi)=\biggl( \sum_{\substack{|n|\le X\\ n\equiv \xi\mmod{\varpi^c}}}|\gra_n|^2
\biggr)^{1/2}.
\end{equation}
Note that, in terms of our earlier notation, one has
$$\rho_0(1)=\rho(\bfgra;X).$$
Moreover, one has the trivial relation
$$\rho_0(1)^2=\sum_{\xi=1}^{\varpi^c}\rho_c(\xi)^2.$$
In view of our assumption that $\gra_n\in (0,1]$ for each $n$, it is apparent that one has 
$\rho_c(\xi)>0$ whenever $1\le \xi<X$.\par

Recalling the notation (\ref{2.2a}), we introduce the normalised exponential sum
\begin{equation}\label{2.13}
\grf_c(\bfalp;\xi)=\rho_c(\xi)^{-1}\sum_{\substack{|n|\le X\\ n\equiv \xi\mmod{\varpi^c}}}
\gra_ne(\psi(n;\bfalp)).
\end{equation}
We find it necessary to consider {\it well-conditioned} $k$-tuples of integers belonging to 
distinct congruence classes modulo a suitable power of $\varpi$. Denote by $\Xi_c(\xi)$ the 
set of $k$-tuples $(\xi_1,\ldots ,\xi_k)$, with
$$1\le \xi_i\le \varpi^{c+1}\quad \text{and}\quad \xi_i\equiv \xi\mmod{\varpi^c}\quad 
(1\le i\le k),$$
and satisfying the property that $\xi_i\equiv \xi_j\mmod{\varpi^{c+1}}$ for no suffices $i$ 
and $j$ with $1\le i<j\le k$. We then put
\begin{equation}\label{2.14}
\grF_c(\bfalp;\xi)=\rho_c(\xi)^{-k}\sum_{\bfxi\in \Xi_c(\xi)}\prod_{i=1}^k
\rho_{c+1}(\xi_i)\grf_{c+1}(\bfalp;\xi_i).
\end{equation}

\par When $a$ and $b$ are non-negative integers, we define
\begin{align}
I_{a,b}(X;\xi,\eta)&=\oint |\grF_a(\bfalp;\xi)^2\grf_b(\bfalp;\eta)^{2s}|\d\bfalp ,
\label{2.15}\\
K_{a,b}(X;\xi,\eta)&=\oint |\grF_a(\bfalp;\xi)^2\grF_b(\bfalp;\eta)^{2u}|\d\bfalp ,
\label{2.16}
\end{align}
and then put
\begin{align}
I_{a,b}(X)&=\rho_0(1)^{-4}\sum_{\xi=1}^{\varpi^a}\sum_{\eta=1}^{\varpi^b}
\rho_a(\xi)^2\rho_b(\eta)^2I_{a,b}(X;\xi,\eta),\label{2.17}\\
K_{a,b}(X)&=\rho_0(1)^{-4}\sum_{\xi=1}^{\varpi^a}\sum_{\eta=1}^{\varpi^b}
\rho_a(\xi)^2\rho_b(\eta)^2K_{a,b}(X;\xi,\eta).\label{2.18}
\end{align}

\par By orthogonality, the mean value $I_{a,b}(X;\xi,\eta)$ counts the number of integral 
solutions of the system
\begin{equation}\label{2.w1}
\sum_{i=1}^k(x_i^j-y_i^j)=\sum_{l=1}^s(v_l^j-w_l^j)\quad (1\le j\le k),
\end{equation}
with
\begin{equation}\label{2.w1a}
|\bfx|,|\bfy|,|\bfv|,|\bfw|\le X,\quad \bfx,\bfy\in \Xi_a(\xi)\mmod{\varpi^{a+1}},
\end{equation}
and $\bfv\equiv \bfw\equiv \eta\mmod{\varpi^b}$, each solution being counted with weight
\begin{equation}\label{2.w2}
\rho_a(\xi)^{-2k}\rho_b(\eta)^{-2s}\Bigl( \prod_{i=1}^k\gra_{x_i}\gra_{y_i}\Bigr) 
\Bigl( \prod_{l=1}^s \gra_{v_l}\gra_{w_l}\Bigr) .
\end{equation}
Similarly, the mean value $K_{a,b}(X;\xi,\eta)$ counts the number of integral solutions of the 
system
\begin{equation}\label{2.w3}
\sum_{i=1}^k(x_i^j-y_i^j)=\sum_{l=1}^u\sum_{m=1}^k(v_{lm}^j-w_{lm}^j)\quad 
(1\le j\le k),
\end{equation}
subject to (\ref{2.w1a}) and $\bfv_l,\bfw_l\in \Xi_b(\eta)\mmod{\varpi^{b+1}}$ 
$(1\le l\le u)$, each solution being counted with weight
\begin{equation}\label{2.w4}
\rho_a(\xi)^{-2k}\rho_b(\eta)^{-2s}\Bigl( \prod_{i=1}^k\gra_{x_i}\gra_{y_i}\Bigr) 
\Bigl( \prod_{l=1}^u\prod_{m=1}^k \gra_{v_{lm}}\gra_{w_{lm}}\Bigr) .
\end{equation}
Given any one such solution to the system (\ref{2.w3}), an application of the Binomial 
Theorem shows that $\bfx-\eta$, $\bfy-\eta$, $\bfv-\eta$, $\bfw-\eta$ is also a solution. Since 
in any solution counted by $K_{a,b}(X;\xi,\eta)$, one has $\bfv\equiv \bfw\equiv 
\eta\mmod{\varpi^b}$, we deduce in particular that
\begin{equation}\label{2.w5}
\sum_{i=1}^k(x_i-\eta)^j\equiv \sum_{i=1}^k(y_i-\eta)^j\mmod{\varpi^{jb}}\quad 
(1\le j\le k).
\end{equation}

\par As in our previous work on efficient congruencing \cite{Woo2012}, our arguments are 
considerably simplified by making transparent the relationship between various mean values, 
on the one hand, and their anticipated magnitudes, on the other. For this reason we consider 
such mean values normalised by their anticipated orders of magnitude, as follows. We define
\begin{equation}\label{2.19}
\llbracket U(X)\rrbracket =\frac{U(X)}{X^{s+k-k(k+1)/2}},
\end{equation}
and, when $0\le a<b$, we define
\begin{align}
\llbracket I_{a,b}(X)\rrbracket &=\frac{I_{a,b}(X)}{(X/M^a)^{k-k(k+1)/2}(X/M^b)^s},
\label{2.20}\\
\llbracket K_{a,b}(X)\rrbracket &=\frac{K_{a,b}(X)}{(X/M^a)^{k-k(k+1)/2}(X/M^b)^s}.
\label{2.21}
\end{align}
In this notation, our earlier bounds (\ref{2.8}) and (\ref{2.9}) for $U(X)$ may be rewritten 
in the form
\begin{equation}\label{2.22}
\llbracket U(X)\rrbracket >X^{\Lam-\del}\quad \text{and}\quad \llbracket U(Y;\bfgra')
\rrbracket <Y^{\Lam+\del}\quad (Y\ge X^{1/2}),
\end{equation}
in which we write
\begin{equation}\label{2.23}
\Lam=\lam-(s+k)+k(k+1)/2.
\end{equation}

\par Our goal is to prove that $\Lam\le 0$ for $s\ge k^2$. This implies that whenever 
$\eps>0$ and $Z$ is sufficiently large in terms of $\eps$, $s$ and $k$, then for any 
non-zero complex sequence $(\grb_n)$, one has
$$U_{s+k}(Z;\bfgrb)\ll Z^{s+k-k(k+1)/2+\eps}.$$
Thus, whenever $s\ge k(k+1)$, $\eps>0$ and $Z$ is sufficiently large in terms of $\eps$, $s$ 
and $k$, it follows that for any complex sequence $(\grb_n)$, one has
$$\oint |f_\bfgrb(\bfalp;Z)|^{2s}\d\bfalp \ll Z^{s-k(k+1)/2+\eps}\biggl( \sum_{|n|\le Z}
|\grb_n|^2\biggr)^s,$$
which establishes the first claim of Theorem \ref{theorem1.1}. The second claim of Theorem 
\ref{theorem1.1} follows by application of the Keil-Zhao device, an argument we describe 
below in \S7.\par

Our strategy may now be outlined in vague terms. We first show that if $U(X)$ is not of the 
anticipated order of magnitude, then for some $\Lam>0$ one has $\llbracket K_{0,1}(X)
\rrbracket \gg X^{\Lam-\del}$, for a suitable large value of $X$. Next, for a sequence of 
integers $a_n$ and $b_n$ with $b_n$ roughly equal to $ka_n$, we show that $\llbracket 
K_{a_{n+1},b_{n+1}}(X)\rrbracket$ is always significantly larger than $\llbracket 
K_{a_n,b_n}(X)\rrbracket$. By iterating this process, we find that for suitably large $n$, the 
normalised mean value $\llbracket K_{a_n,b_n}(X)\rrbracket$ is so large that we contradict 
available upper bounds for its value. Thus, one is forced to conclude that $\Lam\le 0$, as 
desired.

\section{Some consequences of latent translation-dilation invariance} The presence of the 
coefficients $\gra_n$ prevents direct application of translation-dilation invariance in the mean 
values $U(X;\bfgra)$. However, our normalisation of the underlying exponential sums 
$\ftil_\bfgra(\bfalp;X)$, and definition via (\ref{2.6}) of the exponent $\lam$, ensures that 
much of the power of translation-dilation invariance can nonetheless be extracted. In this 
section we record the key consequences of this latent translation-dilation invariance for 
future reference.\par

We begin with an upper bound for a mean value analogous to $U(X;\bfgra)$.

\begin{lemma}\label{lemma3.1} Suppose that $c$ is a non-negative integer with 
$3c\tet\le 1$. Then
\begin{equation}\label{3.1}
\max_{1\le \xi\le \varpi^c}\oint |\grf_c(\bfalp;\xi)|^{2s+2k}\d\bfalp \ll 
(X/M^c)^{\lam+\del}.
\end{equation}
\end{lemma}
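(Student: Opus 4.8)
The plan is to reduce the mean value on the left-hand side of \eqref{3.1} to the quantity $U_{s+k}(Y;\bfgra')$ for a suitable dilated-and-shifted sequence $(\gra'_n)$ and a suitable value of $Y$, and then invoke the upper bound \eqref{2.9} (equivalently, the right-hand half of \eqref{2.22}). First I would fix $\xi$ with $1\le \xi\le \varpi^c$ and examine the exponential sum $\grf_c(\bfalp;\xi)$ defined in \eqref{2.13}. The summation runs over integers $n$ with $|n|\le X$ and $n\equiv \xi\mmod{\varpi^c}$; writing $n=\xi+\varpi^c m$, the variable $m$ ranges over an interval of integers of length roughly $2X/\varpi^c$, and since $M<\varpi\le 2M$ with $M=X^\tet$, this is within a bounded factor of $X/M^c$. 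Put $Y=X/M^c$ (or its integer part). The key point is that $\psi(\xi+\varpi^c m;\bfalp)$, as a polynomial in $m$, has the shape $\psi(m;\bfbet)$ for a new point $\bfbet=\bfbet(\bfalp,\xi,c)\in[0,1)^k$ obtained from $\bfalp$ by an invertible affine change of variables: the Binomial Theorem expands $(\xi+\varpi^c m)^j$ and the leading coefficient in $m$ of the $j$-th term is $\varpi^{cj}\alp_j$, so the induced map on the top-degree coefficient is multiplication by $\varpi^{ck}$, while the lower coefficients are polynomial combinations. This is exactly the latent translation–dilation invariance alluded to in the section title.

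Next I would define the rescaled sequence by $\gra'_m=\gra_{\xi+\varpi^c m}$ for those $m$ with $|\xi+\varpi^c m|\le X$, and $\gra'_m=0$ otherwise. Then by \eqref{2.1} one has $\rho(\bfgra';Y)=\rho_c(\xi)$ exactly, by the definition \eqref{2.10}. Consequently $\grf_c(\bfalp;\xi)$ equals $\ftil_{\bfgra'}(\bfbet;Y)$ after the affine substitution $\bfalp\mapsto\bfbet$, up to harmless boundary adjustments in the range of $m$ (which can be absorbed by a further application of the triangle inequality, splitting the sum into a bounded number of genuine intervals, or by noting that the interval for $m$ is contained in $[-Y,Y]$ once constants are tracked). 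Since the substitution $\bfalp\mapsto\bfbet$ is a measure-preserving bijection of the torus $[0,1)^k$ onto itself — being affine with the triangular Jacobian $\prod_{j}\varpi^{cj}$, an integer, hence inducing a bijection mod $1$ — integrating $|\grf_c(\bfalp;\xi)|^{2s+2k}$ over $\bfalp\in[0,1)^k$ is the same as integrating $|\ftil_{\bfgra'}(\bfbet;Y)|^{2s+2k}$ over $\bfbet\in[0,1)^k$, which is precisely $U_{s+k}(Y;\bfgra')$ by \eqref{2.3}.

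Finally, since $3c\tet\le 1$ forces $Y=X/M^c=X^{1-c\tet}\ge X^{2/3}\ge X^{1/2}$ for $X$ large, the hypothesis of \eqref{2.9} is met, and one concludes $U_{s+k}(Y;\bfgra')<Y^{\lam+\del}=(X/M^c)^{\lam+\del}$, uniformly in $\xi$; taking the maximum over $1\le\xi\le\varpi^c$ and folding the bounded boundary-splitting constant into the $\ll$ gives \eqref{3.1}. The step I expect to require the most care is verifying that the affine change of variables $\bfalp\mapsto\bfbet$ really is a bijection of $[0,1)^k$ preserving Lebesgue measure — one must check that the map is unipotent-plus-diagonal with \emph{integer} diagonal entries $\varpi^{cj}$ so that reduction modulo $1$ is well defined and surjective, and that the off-diagonal (lower-triangular) entries, though not integers, only shift each coordinate by a quantity depending on the higher-indexed coordinates, hence still yield a bijection on the torus after the standard triangular argument; equivalently, one invokes the well-known fact that $\oint G(\bfbet)\d\bfbet$ is unchanged under $\beta_j\mapsto \varpi^{cj}\beta_j+(\text{polynomial in }\beta_{j+1},\dots,\beta_k)$. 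The remaining book-keeping — tracking that the $m$-range lies in $[-Y,Y]$ and that $\rho(\bfgra';Y)=\rho_c(\xi)$ — is routine.
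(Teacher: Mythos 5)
Your argument is correct and reaches the same reduction as the paper: both proofs rescale the weights via $\gra'_m=\gra_{\xi+\varpi^c m}$ (zero-extended), observe $\rho(\bfgra';Y)=\rho_c(\xi)$, identify the left side of \eqref{3.1} with $U_{s+k}(Y;\bfgra')$ for $Y\asymp X/M^c$, and then invoke \eqref{2.9}, using $3c\tet\le 1$ to guarantee $Y\ge X^{1/2}$. The only genuine difference is how the identity corresponding to \eqref{3.4} is verified: the paper never changes variables in the integral at all, but instead passes by orthogonality to the weighted count of solutions of $\sum_i(\varpi^cy_i+\xi)^j=\sum_i(\varpi^cz_i+\xi)^j$ $(1\le j\le k)$ and notes, via the Binomial Theorem, that this system is equivalent to $\sum_iy_i^j=\sum_iz_i^j$, then reverses orthogonality; you instead perform the substitution $\bfbet=A\bfalp$ on the torus. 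Your route is fine, but two details in your justification are slightly misstated: the matrix $A$ has entries $A_{jl}=\binom{l}{j}\xi^{l-j}\varpi^{cj}$ for $l\ge j$, so the off-diagonal entries \emph{are} integers (contrary to your remark), and the induced map on $[0,1)^k$ is not a bijection but a measure-preserving $\varpi^{ck(k+1)/2}$-to-one covering; the integral identity nevertheless holds, most cleanly because the integrand is a trigonometric polynomial and $A^{T}\bfn=0$ forces $\bfn=\mathbf{0}$, or by your coordinate-by-coordinate argument $\bet_j\mapsto\varpi^{cj}\bet_j+(\text{terms in }\bet_{j+1},\ldots,\bet_k)$. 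The paper's orthogonality route avoids having to discuss measure preservation at all, while yours avoids the double translation between integrals and weighted counts; beyond that the two proofs are interchangeable, and your bookkeeping of the range of $m$ and the choice of $Y$ matches the paper's choice $Y=(X+\xi)/\varpi^c$ up to harmless constants.
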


\begin{proof} Let $\xi$ be an integer with $1\le \xi\le \varpi^c$. From the definition 
(\ref{2.13}) of the exponential sum $\grf_c(\bfalp;\xi)$, one has
$$\grf_c(\bfalp;\xi)=\rho_c(\xi)^{-1}\sum_{-(X+\xi)/\varpi^c\le y\le (X-\xi)/\varpi^c}
\grb_y(\xi)e(\psi(\varpi^cy+\xi;\bfalp)),$$
in which $\psi(n;\bfalp)$ is given by (\ref{2.2a}), and
\begin{equation}\label{3.2}
\grb_y(\xi)=\gra_{\varpi^cy+\xi}.
\end{equation}
By orthogonality, one finds that the integral on the left hand side of (\ref{3.1}) counts the 
number of integral solutions of the system of equations
\begin{equation}\label{3.3}
\sum_{i=1}^{s+k}(\varpi^cy_i+\xi)^j=\sum_{i=1}^{s+k}(\varpi^cz_i+\xi)^j\quad 
(1\le j\le k),
\end{equation}
with $-(X+\xi)/\varpi^c\le \bfy,\bfz\le (X-\xi)/\varpi^c$, each solution being counted with 
weight
$$\rho_c(\xi)^{-2s-2k}\grb_{y_1}\ldots \grb_{y_{s+k}}\grb_{z_1}\ldots \grb_{z_{s+k}}.$$

\par An application of the Binomial Theorem shows that the pair $\bfy,\bfz$ satisfies 
(\ref{3.3}) if and only if it satisfies the system
$$\sum_{i=1}^{s+k}y_i^j=\sum_{i=1}^{s+k}z_i^j\quad (1\le j\le k).$$
Thus, recalling (\ref{2.2}), reversing track and accommodating the end-points of the 
summation, we find that
\begin{equation}\label{3.4}
\oint |\grf_c(\bfalp;\xi)|^{2s+2k}\d\bfalp =\oint |\ftil_{\bfgrb'}(\bfalp;(X+\xi)/\varpi^c)
|^{2s+2k}\d\bfalp ,
\end{equation}
where
$$\grb'_y=\begin{cases} \grb_y(\xi),&
\text{when $-(X+\xi)/\varpi^c\le y\le (X-\xi)/\varpi^c$,}\\
0,&\text{when $(X-\xi)/\varpi^c<y\le (X+\xi)/\varpi^c$.}\end{cases}$$
Here, we have made the trivial observation that, in view of the relation (\ref{3.2}), one has
\begin{align*}
\rho(\bfgrb';(X+\xi)/\varpi^c)^2&=\sum_{-(X+\xi)/\varpi^c\le y\le (X-\xi)/\varpi^c}
|\grb_y(\xi)|^2\\
&=\sum_{\substack{|n|\le X\\ n\equiv \xi\mmod{\varpi^c}}}|\gra_n|^2=\rho_c(\xi)^2.
\end{align*}

\par Observe next that the hypothesis $3c\tet\le 1$ ensures that $(X+\xi)/\varpi^c\ge 
X/M^c>X^{1/2}$. Thus, the upper bound (\ref{2.9}) supplies the estimate
\begin{align*}
\oint |\ftil_{\bfgrb'}(\bfalp;(X+\xi)/\varpi^c)|^{2s+2k}\d\bfalp &=U((X+\xi)/\varpi^c;\bfgrb')
\\
&<((X+\xi)/\varpi^c)^{\lam+\del}\ll (X/M^c)^{\lam+\del}.
\end{align*}
The desired conclusion now follows by substituting this bound into (\ref{3.4}).
\end{proof}

We record an additional estimate to demystify a bound of which we make use in our 
discussion of the congruencing step.

\begin{lemma}\label{lemma3.1a} Suppose that $c$ is a non-negative integer with 
$3c\tet\le 1$. Then
$$\max_{1\le \xi\le \varpi^c}\oint |\grF_c(\bfalp;\xi)|^{2u+2}\d\bfalp \ll 
(X/M^c)^{\lam+\del}.$$
\end{lemma}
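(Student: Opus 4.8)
The plan is to recognise $\grF_c(\bfalp;\xi)$ as a \emph{restricted} $k$-th power of $\grf_c(\bfalp;\xi)$, and then to use the positivity of the weights $\gra_n$ to discard the restriction at the level of mean values. Unwinding the definitions (\ref{2.13}) and (\ref{2.14}), and recalling that every entry $\xi_i$ of a tuple $\bfxi\in\Xi_c(\xi)$ satisfies $\xi_i\equiv\xi\mmod{\varpi^c}$ while no two entries are congruent modulo $\varpi^{c+1}$, one finds that $\rho_c(\xi)^k\grF_c(\bfalp;\xi)$ is the sum of $\gra_{n_1}\cdots\gra_{n_k}\,e\bigl(\psi(n_1;\bfalp)+\ldots+\psi(n_k;\bfalp)\bigr)$ over all $k$-tuples $\bfn$ with $|n_i|\le X$ and $n_i\equiv\xi\mmod{\varpi^c}$ that are well-conditioned in the sense that $n_i\not\equiv n_j\mmod{\varpi^{c+1}}$ for $i\ne j$. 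Dropping the well-conditioning condition, the analogous unrestricted sum is precisely $(\rho_c(\xi)\grf_c(\bfalp;\xi))^k$, by (\ref{2.13}).

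Next I would pass to the Diophantine interpretation. By orthogonality, $\oint|\grF_c(\bfalp;\xi)|^{2u+2}\d\bfalp$ equals $\rho_c(\xi)^{-k(2u+2)}$ times the number of integral solutions of the system $\sum_{l=1}^{u+1}\sum_{i=1}^k x_{li}^j=\sum_{l=1}^{u+1}\sum_{i=1}^k y_{li}^j$ $(1\le j\le k)$, in which $|x_{li}|,|y_{li}|\le X$, each variable is congruent to $\xi$ modulo $\varpi^c$, and each of the $2(u+1)$ blocks $(x_{l1},\ldots,x_{lk})$ and $(y_{l1},\ldots,y_{lk})$ is well-conditioned; here each solution is counted with the non-negative weight $\prod_{l,i}\gra_{x_{li}}\gra_{y_{li}}$. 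Since $(u+1)k=s+k$, the mean value $\oint|\grf_c(\bfalp;\xi)|^{2s+2k}\d\bfalp$ admits exactly the same description, with the same normalising factor $\rho_c(\xi)^{-2(s+k)}=\rho_c(\xi)^{-k(2u+2)}$ and the same weights, but \emph{without} the well-conditioning constraints on the blocks. Concatenating blocks identifies each solution of the former type with a solution of the latter type carrying the identical weight, and this identification is injective; as every weight is non-negative --- this is where we use the reduction to $\gra_n\in(0,1]$ --- we conclude that
$$\oint|\grF_c(\bfalp;\xi)|^{2u+2}\d\bfalp\le\oint|\grf_c(\bfalp;\xi)|^{2s+2k}\d\bfalp .$$

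Finally, since $3c\tet\le1$, Lemma \ref{lemma3.1} shows that the right-hand side is $\ll(X/M^c)^{\lam+\del}$, uniformly in $\xi$, and taking the maximum over $1\le\xi\le\varpi^c$ yields the claim. The only delicate point is the bookkeeping in the middle paragraph: one must check that the two mean values really do carry matching powers of $\rho_c(\xi)$ and that the block-concatenation map preserves weights, after which the positivity of the weights does all the work. There is no genuine analytic obstacle here --- in effect this lemma simply says that imposing well-conditioning can only shrink the relevant weighted solution count, so that $\grF_c$ obeys the same bound as the honest $k$-th power $\grf_c^{\,k}$.
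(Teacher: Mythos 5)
Your proposal is correct and follows essentially the same route as the paper: both interpret $\oint|\grF_c(\bfalp;\xi)|^{2u+2}\d\bfalp$ via orthogonality as a non-negatively weighted count of solutions subject to extra conditioning congruences, use the positivity of the weights (and the matching normalisation $\rho_c(\xi)^{-2(s+k)}$, since $(u+1)k=s+k$) to drop those constraints and dominate it by $\oint|\grf_c(\bfalp;\xi)|^{2s+2k}\d\bfalp$, and then invoke Lemma \ref{lemma3.1}.
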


\begin{proof} From the definitions (\ref{2.13}) and (\ref{2.14}) of $\grf_c(\bfalp;\xi)$ 
and $\grF_c(\bfalp;\xi)$, one has
\begin{equation}\label{3.w1}
\oint |\grF_c(\bfalp;\xi)|^{2u+2}\d\bfalp =\rho_c(\xi)^{-2s-2k}\oint \biggl| 
\sum_{\bfxi\in \Xi_c(\xi)}\prod_{i=1}^k\grg_{c+1}(\bfalp;\xi_i)\biggr|^{2u+2}\d\bfalp ,
\end{equation}
where we temporarily write
$$\grg_d(\bfalp;\zet)=\sum_{\substack{|n|\le X\\ n\equiv \zet\mmod{\varpi^d}}}
\gra_ne(\psi(n;\bfalp)).$$
We presume that the weights $\gra_n$ are positive, and hence the mean value on the right 
hand side of (\ref{3.w1}) counts the number of integral solutions of the system of equations
$$\sum_{i=1}^{s+k}(x_i^j-y_i^j)=0\quad (1\le j\le k),$$
with $|\bfx|,|\bfy|\le X$, $\bfx\equiv \bfy\equiv \xi\mmod{\varpi^c}$ and certain additional 
congruence conditions imposed by conditioning hypotheses, each solution being counted with 
the non-negative weight
$$\rho_c(\xi)^{-2s-2k}\gra_{x_1}\ldots \gra_{x_{s+k}}\gra_{y_1}\ldots \gra_{y_{s+k}}.
$$
Since these weights are non-negative, the omission of the additional congruence conditions 
cannot decrease the resulting estimate, and thus
\begin{align*}
\oint |\grF_c(\bfalp;\xi)|^{2u+2}\d\bfalp &\le \rho_c(\xi)^{-2s-2k}\oint 
|\grg_c(\bfalp;\xi)|^{2s+2k}\d\bfalp\\
&=\oint|\grf_c(\bfalp;\xi)|^{2s+2k}\d\bfalp .
\end{align*}
The conclusion of the lemma is now immediate from Lemma \ref{lemma3.1}.
\end{proof}

A variant of Lemma \ref{lemma3.1} proves useful both in this section and elsewhere.

\begin{lemma}\label{lemma3.2} Suppose that $c$ and $d$ are non-negative integers 
satisfying the condition $\max\{3c\tet,3d\tet\}\le 1$. Then
$$\max_{1\le \xi\le \varpi^c}\max_{1\le \eta\le \varpi^d}\oint |\grf_c(\bfalp;\xi)^{2k}
\grf_d(\bfalp;\eta)^{2s}|\d\bfalp \ll \left( (X/M^c)^k(X/M^d)^s\right)^{(\lam+\del)/(s+k)}.
$$
\end{lemma}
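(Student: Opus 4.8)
\textbf{Proof proposal for Lemma \ref{lemma3.2}.}

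The plan is to deduce the mixed mean value bound from Lemma \ref{lemma3.1} by an application of H\"older's inequality that splits the integral into two pure moments. Write $F(\bfalp)=|\grf_c(\bfalp;\xi)|$ and $G(\bfalp)=|\grf_d(\bfalp;\eta)|$. Then the integral in question is $\oint F(\bfalp)^{2k}G(\bfalp)^{2s}\d\bfalp$. Since $2k+2s=2(s+k)$, I would apply H\"older with exponents $(s+k)/k$ and $(s+k)/s$ (which are conjugate), obtaining
\begin{equation*}
\oint F^{2k}G^{2s}\d\bfalp \le \biggl( \oint F^{2(s+k)}\d\bfalp \biggr)^{k/(s+k)}\biggl( \oint G^{2(s+k)}\d\bfalp \biggr)^{s/(s+k)}.
\end{equation*}
Here I have used that $F^{2k}$ raised to the power $(s+k)/k$ gives $F^{2(s+k)}=F^{2s+2k}$, and likewise $G^{2s}$ raised to the power $(s+k)/s$ gives $G^{2s+2k}$.

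Now each factor on the right is exactly a moment of the shape controlled by Lemma \ref{lemma3.1}. Indeed $\oint F^{2s+2k}\d\bfalp = \oint |\grf_c(\bfalp;\xi)|^{2s+2k}\d\bfalp \ll (X/M^c)^{\lam+\del}$, using the hypothesis $3c\tet\le 1$, and similarly $\oint G^{2s+2k}\d\bfalp = \oint |\grf_d(\bfalp;\eta)|^{2s+2k}\d\bfalp \ll (X/M^d)^{\lam+\del}$, using $3d\tet\le 1$. Substituting these two bounds into the H\"older estimate yields
\begin{equation*}
\oint F^{2k}G^{2s}\d\bfalp \ll (X/M^c)^{(\lam+\del)k/(s+k)}(X/M^d)^{(\lam+\del)s/(s+k)}=\bigl( (X/M^c)^k(X/M^d)^s\bigr)^{(\lam+\del)/(s+k)}.
\end{equation*}
Taking the maximum over $1\le \xi\le \varpi^c$ and $1\le \eta\le \varpi^d$ on both sides — noting the right-hand side is independent of $\xi$ and $\eta$ — gives precisely the claimed inequality.

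There is essentially no obstacle here: the only things to verify are that $(s+k)/k$ and $(s+k)/s$ are a legitimate conjugate pair of exponents (both exceed $1$ since $s,k\ge 1$, and their reciprocals sum to $1$), and that the arithmetic of exponents in the final line collapses correctly. The one point deserving a word of care is that the implied constants absorbed into $\ll$ depend only on $s$, $k$ and $\del$ (through the two invocations of Lemma \ref{lemma3.1}), and in particular not on $c$, $d$, $\xi$, $\eta$ or $X$, so that the passage to the maximum is harmless. This is the standard device by which pure-moment estimates are combined into mixed ones throughout the efficient congruencing literature, and the present instance is a direct transcription of it.
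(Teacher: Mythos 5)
Your proposal is correct and coincides with the paper's own argument: the same application of H\"older's inequality with conjugate exponents $(s+k)/k$ and $(s+k)/s$ splitting the mixed moment into the two pure $(2s+2k)$-th moments, each then estimated by Lemma \ref{lemma3.1}. Nothing further is needed.
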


\begin{proof} Let $\xi$ and $\eta$ be integers with $1\le \xi\le \varpi^c$ and 
$1\le \eta \le \varpi^d$. An application of H\"older's inequality reveals that
\begin{align*}
\oint |\grf_c(\bfalp;\xi)^{2k}&\grf_d(\bfalp;\eta)^{2s}|\d\bfalp \\
&\le \biggl( \oint |\grf_c(\bfalp;\xi)|^{2s+2k}\d\bfalp \biggr)^{k/(s+k)}\biggl( \oint 
|\grf_d(\bfalp;\eta)|^{2s+2k}\d\bfalp \biggr)^{s/(s+k)}.
\end{align*}
The conclusion of the lemma follows from this bound by inserting the estimate supplied by 
Lemma \ref{lemma3.1} to estimate the mean values occurring on the right hand side.
\end{proof}

The last lemma of this section provides a crude estimate for the quantity $K_{a,b}(X)$ of 
use at the end of our iterative process.

\begin{lemma}\label{lemma3.3} Suppose that $a$ and $b$ are integers with 
$0\le a<b\le (3\tet)^{-1}$. Then provided that $\Lam\ge 0$, one has
$$\llbracket K_{a,b}(X)\rrbracket \ll X^{\Lam+\del}(M^{b-a})^{k(k+1)/2}.$$
\end{lemma}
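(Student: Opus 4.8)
The plan is to bound $\llbracket K_{a,b}(X)\rrbracket$ by reducing the mean value $K_{a,b}(X;\xi,\eta)$ to a form controllable by the latent translation-dilation invariance lemmas of \S3, specifically Lemma \ref{lemma3.1a} and Lemma \ref{lemma3.2}. Recall from (\ref{2.16}) and (\ref{2.18}) that $K_{a,b}(X)$ is a weighted average over $\xi,\eta$ of integrals $\oint |\grF_a(\bfalp;\xi)^2\grF_b(\bfalp;\eta)^{2u}|\d\bfalp$. First I would apply H\"older's inequality to this integral, splitting the exponents $2$ and $2u$ across the total exponent $2u+2$, to obtain
$$\oint |\grF_a(\bfalp;\xi)^2\grF_b(\bfalp;\eta)^{2u}|\d\bfalp \le \Bigl( \oint |\grF_a(\bfalp;\xi)|^{2u+2}\d\bfalp \Bigr)^{1/(u+1)}\Bigl( \oint |\grF_b(\bfalp;\eta)|^{2u+2}\d\bfalp \Bigr)^{u/(u+1)}.$$
Then Lemma \ref{lemma3.1a}, whose hypothesis $3a\tet,3b\tet\le 1$ is guaranteed by $a<b\le(3\tet)^{-1}$, supplies $\oint |\grF_a(\bfalp;\xi)|^{2u+2}\d\bfalp \ll (X/M^a)^{\lam+\del}$ and similarly for $\eta$ with $M^b$. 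Combining gives
$$\oint |\grF_a(\bfalp;\xi)^2\grF_b(\bfalp;\eta)^{2u}|\d\bfalp \ll (X/M^a)^{(\lam+\del)/(u+1)}(X/M^b)^{u(\lam+\del)/(u+1)}.$$

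Next I would feed this pointwise-in-$(\xi,\eta)$ bound into the definition (\ref{2.18}) of $K_{a,b}(X)$. Since the bound just obtained is uniform in $\xi$ and $\eta$, it pulls out of the double sum, leaving the factor $\rho_0(1)^{-4}\sum_\xi\sum_\eta \rho_a(\xi)^2\rho_b(\eta)^2$. By the trivial relation $\rho_0(1)^2=\sum_{\xi}\rho_c(\xi)^2$ recorded after (\ref{2.10}), both the $\xi$-sum and the $\eta$-sum equal $\rho_0(1)^2$, so this prefactor is exactly $1$. Hence $K_{a,b}(X)\ll (X/M^a)^{(\lam+\del)/(u+1)}(X/M^b)^{u(\lam+\del)/(u+1)}$. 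It now remains to normalise: divide by $(X/M^a)^{k-k(k+1)/2}(X/M^b)^s$ as in (\ref{2.21}), and convert $\lam$ to $\Lam$ via (\ref{2.23}), i.e. $\lam = \Lam + (s+k) - k(k+1)/2$, recalling $s=uk$ so that $s+k = (u+1)k$ and hence $(s+k)/(u+1) = k$.

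The computation that remains is bookkeeping with the exponents of $X$ and of $M$. Writing $L=\lam+\del$, the numerator contributes $X^L(M^a)^{-L/(u+1)}(M^b)^{-uL/(u+1)}$ (using $X^{L/(u+1)}X^{uL/(u+1)}=X^L$), and dividing by the normalising denominator replaces the exponent of $X$ by $L - (k-k(k+1)/2) - s = L - (s+k) + k(k+1)/2 = \Lam+\del$ after using (\ref{2.23}). For the $M$-powers, one collects $M^{a[(k-k(k+1)/2) - L/(u+1)]}$ and $M^{b[s - uL/(u+1)]}$; substituting $L/(u+1) = (\Lam+\del)/(u+1) + k - k(k+1)/(2(u+1))$ and $u L/(u+1) = u(\Lam+\del)/(u+1) + s - uk(k+1)/(2(u+1))$, the bracketed coefficients become $\tfrac{k(k+1)}{2(u+1)} - \tfrac{\Lam+\del}{u+1}$ and $\tfrac{uk(k+1)}{2(u+1)} - \tfrac{u(\Lam+\del)}{u+1}$ respectively, so the total $M$-exponent is $a(k(k+1)/2 - (\Lam+\del))/(u+1) + b u(k(k+1)/2-(\Lam+\del))/(u+1)$. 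Since $\Lam\ge 0$ and $\del$ is negligible, dropping the $-(\Lam+\del)$ terms only increases the bound (as $a,b\ge 0$), and since $a+ub \le (u+1)b$ crudely — or more sharply, using $b-a$ and $a<b$ — one checks the $M$-exponent is at most $(b-a)\cdot k(k+1)/2$ up to the harmless $u$-dependent constant absorbed in $\ll$. The only point demanding a little care is the final inequality manipulation showing that the accumulated $M$-exponent is dominated by $(M^{b-a})^{k(k+1)/2}$: this needs the crude bound that $a$ and $ub$ are each at most a constant times $b-a$ is \emph{false} in general, so instead one argues that the $M$-exponent equals $\tfrac{k(k+1)}{2}\cdot\tfrac{a+ub}{u+1} - (\Lam+\del)\tfrac{a+ub}{u+1}$, and since $\tfrac{a+ub}{u+1}\le b$ while the problem only asks for an upper bound with $M^{(b-a)k(k+1)/2}$, one should instead verify directly that $\tfrac{a+ub}{u+1} - (\text{contribution already removed}) \le b-a$; in practice Wooley's normalisation is arranged precisely so that this works out, and I expect the true statement to follow from the identity $\tfrac{a+ub}{u+1}\le b$ combined with the $X^{\Lam+\del}$ factor already extracted — this reconciliation of exponents is the one genuinely fiddly step, and everything else is mechanical.
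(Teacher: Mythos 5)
Your overall strategy is sound and is essentially the paper's argument in lightly reorganised form: the paper uses positivity of the $\gra_n$ to relax the conditioning and bound $K_{a,b}(X;\xi,\eta)\le\oint|\grf_a(\bfalp;\xi)^{2k}\grf_b(\bfalp;\eta)^{2s}|\d\bfalp$, then applies Lemma \ref{lemma3.2}, whereas you apply H\"older directly to $\grF_a(\bfalp;\xi)^2\grF_b(\bfalp;\eta)^{2u}$ and invoke Lemma \ref{lemma3.1a} twice; since $k/(s+k)=1/(u+1)$ and $s/(s+k)=u/(u+1)$, the two routes produce identical exponents, and your observation that the weighted prefactor sums to $1$ is correct.

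However, the final bookkeeping contains a genuine error that leaves the proof unclosed, as your last paragraph itself concedes. Writing $L=\lam+\del$, the coefficient of $a$ in the $M$-exponent is $(k-\tfrac{1}{2}k(k+1))-L/(u+1)$, and substituting $L/(u+1)=\tfrac{\Lam+\del}{u+1}+k-\tfrac{k(k+1)}{2(u+1)}$ gives $-\tfrac{uk(k+1)}{2(u+1)}-\tfrac{\Lam+\del}{u+1}$; you recorded $+\tfrac{k(k+1)}{2(u+1)}-\tfrac{\Lam+\del}{u+1}$, having dropped the $-\tfrac{1}{2}k(k+1)$ arising from the normalising factor $(X/M^a)^{k-k(k+1)/2}$. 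This matters: your resulting total exponent $\tfrac{a+ub}{u+1}\bigl(\tfrac{1}{2}k(k+1)-(\Lam+\del)\bigr)$ is in general \emph{not} at most $\tfrac{1}{2}k(k+1)(b-a)$ (take $a=b-1$ with $b$ large), which is exactly why you could not reconcile the exponents and ended with a hope rather than an argument. With the sign corrected, the total $M$-exponent is $\tfrac{u(b-a)}{u+1}\cdot\tfrac{k(k+1)}{2}-\tfrac{(a+ub)(\Lam+\del)}{u+1}$; the hypothesis $\Lam\ge 0$ (with $\del>0$ and $a+ub\ge 0$) lets you discard the second term, and since $u/(u+1)<1$ and $b>a$ the first is at most $\tfrac{1}{2}k(k+1)(b-a)$, yielding $\llbracket K_{a,b}(X)\rrbracket\ll X^{\Lam+\del}(M^{u(b-a)/(u+1)})^{k(k+1)/2}\le X^{\Lam+\del}(M^{b-a})^{k(k+1)/2}$, which is precisely the paper's conclusion. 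So the method is right, but as submitted the decisive inequality is both miscomputed and left unverified.
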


\begin{proof} Let $\xi$ and $\eta$ be integers with $1\le \xi\le \varpi^c$ and 
$1\le \eta\le \varpi^d$. Then, as in the discussion of \S2, we find that the mean value 
$K_{a,b}(X;\xi,\eta)$ counts the number of integral solutions $\bfx,\bfy,\bfv,\bfw$ of the 
system (\ref{2.w3}) subject to $|\bfx|,|\bfy|,|\bfv|,|\bfw|\le X$,
\begin{equation}\label{3.6}
\bfx,\bfy\in \Xi_a(\xi)\mmod{\varpi^{a+1}},\quad \bfv_l,\bfw_l\in \Xi_b(\eta)
\mmod{\varpi^{b+1}}\quad (1\le l\le u),
\end{equation}
each solution being counted with weight (\ref{2.w4}). Since we suppose the weights 
$\gra_n$ to be positive, we may relax the conditions (\ref{3.6}) to insist only that
$$\bfx\equiv \bfy\equiv \xi\mmod{\varpi^a},\quad \bfv_l\equiv\bfw_l\equiv \eta
\mmod{\varpi^b}\quad (1\le l\le u),$$
this relaxation only increasing our resulting estimate for $K_{a,b}(X;\xi,\eta)$. By 
reinterpreting the associated number of solutions of the system (\ref{2.w3}) via 
orthogonality and invoking Lemma \ref{lemma3.2}, we deduce that
\begin{align*}
K_{a,b}(X;\xi,\eta)&\le \oint |\grf_a(\bfalp;\xi)^{2k}\grf_b(\bfalp;\eta)^{2s}|\d\bfalp \\
&\ll \left( (X/M^a)^k(X/M^b)^s\right)^{(\lam+\del)/(s+k)}.
\end{align*}

\par Next we recall (\ref{2.18}), and note that the definition (\ref{2.10}) ensures that
$$\sum_{\xi=1}^{\varpi^a}\rho_a(\xi)^2=\rho_0(1)^2=\sum_{\eta=1}^{\varpi^b}
\rho_b(\eta)^2.$$
In view of (\ref{2.23}), we deduce that
$$K_{a,b}(X)\ll (X/M^a)^{k-k(k+1)/2}(X/M^b)^s(M^{s(b-a)/(s+k)})^{k(k+1)/2}
X^{\Lam+\del}.$$
We consequently conclude from (\ref{2.21}) that
$$\llbracket K_{a,b}(X)\rrbracket \ll (M^{s(b-a)/(s+k)})^{k(k+1)/2}X^{\Lam+\del},$$
which suffices to complete the proof of the lemma.
\end{proof}

\section{The conditioning process} The variables underlying the mean value 
$I_{a,b}(X;\xi,\eta)$ must be conditioned so as to ensure that appropriate non-singularity 
conditions hold, yielding the mean value $K_{a,b}(X;\xi,\eta)$ central to the congruencing 
process. This we achieve in the next two lemmata. We note in this context that our 
conditioning treatment here is considerably sharper than in our earlier work associated with 
Vinogradov's mean value theorem.

\begin{lemma}\label{lemma4.1} Let $a$ and $b$ be integers with $b>a\ge 0$. Then one 
has
$$I_{a,b}(X)\ll K_{a,b}(X)+I_{a,b+1}(X).$$
\end{lemma}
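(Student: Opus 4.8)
The plan is to interpret the mean value $I_{a,b}(X;\xi,\eta)$ via orthogonality, isolate the contribution of those tuples $\bfx,\bfy\in\Xi_a(\xi)\mmod{\varpi^{a+1}}$ that fail to be well-conditioned at level $b+1$, and show that this defective contribution is controlled by $I_{a,b+1}(X)$, while the well-conditioned contribution reproduces $K_{a,b}(X)$. Concretely, recall from \S2 that $I_{a,b}(X;\xi,\eta)$ counts solutions of \eqref{2.w1} subject to \eqref{2.w1a} with $\bfv\equiv\bfw\equiv\eta\mmod{\varpi^b}$, weighted by \eqref{2.w2}. The variables $\bfv,\bfw$ run over the $2s$ summands underlying $\grf_b(\bfalp;\eta)^{2s}$, all lying in the single class $\eta\mmod{\varpi^b}$; we will classify each such variable according to its residue modulo $\varpi^{b+1}$, i.e.\ according to which of the $\varpi$ subclasses $\eta+t\varpi^b$ ($0\le t<\varpi$) it occupies. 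Grouping the $\varpi$ exponential sums $\grf_{b+1}(\bfalp;\eta+t\varpi^b)$ and writing $\grf_b(\bfalp;\eta)=\rho_b(\eta)^{-1}\sum_{t}\rho_{b+1}(\eta+t\varpi^b)\grf_{b+1}(\bfalp;\eta+t\varpi^b)$, one expands $\grf_b(\bfalp;\eta)^{2s}$ and sorts the resulting terms by whether the multiset of $t$-values meets $k$ distinct subclasses among the $s$ (say) variables on the $\bfv$-side (and likewise on the $\bfw$-side).

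First I would make the dichotomy precise: say a solution is \emph{well-conditioned} if among $v_1,\dots,v_s$ there exist $k$ indices lying in pairwise distinct subclasses modulo $\varpi^{b+1}$, and similarly for $w_1,\dots,w_s$; otherwise it is \emph{defective}. For the defective part, on at least one of the two sides---say the $\bfv$-side---the $s$ variables occupy at most $k-1$ subclasses modulo $\varpi^{b+1}$; by a pigeonhole/Hölder argument one then bounds the contribution by a sum over a single refined class $\eta'=\eta+t_0\varpi^b$ of a mean value of the shape $\oint|\grF_a(\bfalp;\xi)^2\grf_b(\bfalp;\eta)^{2s-?}\grf_{b+1}(\bfalp;\eta')^{?}|$, and using the nesting relation between $\grf_b(\bfalp;\eta)$ and the $\grf_{b+1}(\bfalp;\eta+t\varpi^b)$ together with Hölder in the $L^{2s}$-norms (exactly the kind of manipulation used in Lemma \ref{lemma3.2}), one collapses this to the mean value defining $I_{a,b+1}(X;\xi,\eta')$. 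Summing over $\xi,\eta$ with the weights $\rho_a(\xi)^2\rho_b(\eta)^2$ as in \eqref{2.17}, and using $\sum_\eta\rho_b(\eta)^2=\sum_{\eta'}\rho_{b+1}(\eta')^2=\rho_0(1)^2$, the defective contribution is $\ll I_{a,b+1}(X)$.

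For the well-conditioned part I would use the standard device: if $v_1,\dots,v_s$ meet $k$ distinct subclasses, then by symmetry of the $2s$ variables and positivity of the weights we may (after multiplying by a bounded constant $\binom{s}{k}^2$, harmless under $\ll$) restrict to solutions in which $v_1,\dots,v_k$ already lie in pairwise distinct subclasses modulo $\varpi^{b+1}$, and likewise $w_1,\dots,w_k$; that is exactly the data packaged by $\grF_b(\bfalp;\eta)$ via \eqref{2.14}, with the remaining $s-k = (u-1)k$ variables on each side absorbed into further factors of $\grF_b$ once one checks $u\ge k$ gives enough room---this is where the identity $s=uk$ and the precise form \eqref{2.16} of $K_{a,b}$ are used. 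Reassembling, the well-conditioned contribution is $\ll K_{a,b}(X)$. Adding the two pieces gives $I_{a,b}(X)\ll K_{a,b}(X)+I_{a,b+1}(X)$.

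The main obstacle I anticipate is the bookkeeping in the well-conditioned case: one must verify that $\grF_b(\bfalp;\eta)^{2u}$ genuinely accommodates all $2s=2uk$ of the $\bfv,\bfw$-variables once $k$ of them per side are forced into distinct subclasses, and that the weights \eqref{2.w2} and \eqref{2.w4} match up exactly after the normalising factors $\rho_{b+1}(\xi_i)$ in \eqref{2.14} are taken into account; getting the powers of $\varpi$ and the $\rho$-factors to balance requires care, though no deep idea. A secondary point of caution is that the defective case must be set up so that the exponent bookkeeping in the Hölder step lands precisely on the normalisation \eqref{2.20} of $I_{a,b+1}$ rather than some off-by-one variant---but since we are only claiming an $\ll$ relation among the un-normalised quantities $I_{a,b}(X)$, $K_{a,b}(X)$, $I_{a,b+1}(X)$, this reduces to elementary manipulation with Hölder's inequality and the telescoping identities for the $\rho_c(\xi)$.
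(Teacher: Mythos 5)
There is a genuine gap, and it sits exactly where you flagged your ``main obstacle'': the well-conditioned contribution is \emph{not} directly $\ll K_{a,b}(X)$. Your notion of well-conditioned only guarantees that \emph{some} $k$ of the $v_i$ (and some $k$ of the $w_i$) lie in pairwise distinct subclasses modulo $\varpi^{b+1}$; it says nothing about the remaining $s-k=(u-1)k$ variables on each side, which may, for instance, all lie in a single subclass. By contrast, $K_{a,b}(X;\xi,\eta)$ in (\ref{2.16}) requires \emph{every} block $\bfv_l,\bfw_l$ $(1\le l\le u)$ to lie in $\Xi_b(\eta)\mmod{\varpi^{b+1}}$, i.e.\ all $2u$ blocks simultaneously well-conditioned. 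No relabelling with a bounded combinatorial loss can force that, so the ``reassembly'' step fails. The way the paper gets around this is the key idea you are missing: extract only \emph{one} factor of $\grF_b$ (from $k$ of the $v$-variables in distinct subclasses, a configuration that is automatic once no three $v_i$ share a subclass, since $s\ge k^2\ge 2k$), bound that piece by
$$\oint |\grF_a(\bfalp;\xi)|^2\grF_b(\bfalp;\eta)\grf_b(\bfalp;\eta)^{s-k}\grf_b(-\bfalp;\eta)^{s}\d\bfalp \le I_{a,b}(X;\xi,\eta)^{1-k/(2s)}K_{a,b}(X;\xi,\eta)^{k/(2s)}$$
by H\"older (this is where $s=uk$ enters), sum over $\xi,\eta$, and then \emph{disentangle}: from $I_{a,b}(X)\ll I_{a,b+1}(X)+I_{a,b}(X)^{1-k/(2s)}K_{a,b}(X)^{k/(2s)}$ one deduces the stated bound. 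The interpolation-plus-disentangling device is not optional bookkeeping; without it the lemma is not reachable along your route.

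A secondary, related problem is the defective case. If the $v_i$ occupy at most $k-1$ subclasses, the $w_i$ are still unconstrained modulo $\varpi^{b+1}$ and the $v_i$ themselves are spread over several subclasses, so the contribution cannot simply ``collapse'' to $I_{a,b+1}(X;\xi,\eta')$, whose definition puts all $2s$ of those variables in one subclass. The paper's treatment (with the complementary dichotomy: three of the $v_i$ in a common subclass versus no three) again proceeds via H\"older, producing a geometric mean of the shape $I_{a,b}(X)^{(2s-3)/(3s-3)}I_{a,b+1}(X)^{s/(3s-3)}$, which is then disentangled. So both halves of your decomposition need the same fractional-power interpolation argument; as written, neither half closes.
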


\begin{proof}Consider fixed integers $\xi$ and $\eta$ with $1\le \xi\le \varpi^a$ and 
$1\le \eta\le \varpi^b$. Then by orthogonality, one finds from (\ref{2.15}) that $I_{a,b}
(X;\xi,\eta)$ counts the number of integral solutions $\bfx,\bfy,\bfv,\bfw$ of the system 
(\ref{2.w1}), with its attendant conditions, and with each solution counted with weight 
(\ref{2.w2}). Let $T_1(\xi,\eta)$ denote the contribution to $I_{a,b}(X;\xi,\eta)$ arising 
from those integral solutions in which three at least of the integers $v_1,\ldots ,v_s$ lie in 
a common congruence class modulo $\varpi^{b+1}$. Also, let $T_2(\xi,\eta)$ denote the 
corresponding contribution to $I_{a,b}(X;\xi,\eta)$ arising from those integral solutions in 
which no three of the integers $v_1,\ldots ,v_s$ lie in a common congruence class modulo 
$\varpi^{b+1}$. Thus we have
\begin{equation}\label{4.1}
I_{a,b}(X;\xi,\eta)=T_1(\xi,\eta)+T_2(\xi,\eta).
\end{equation}

\par We begin by estimating the quantity $T_1(\xi,\eta)$. Let $\bfx,\bfy,\bfv,\bfw$ be a 
solution contributing to $T_1(\xi,\eta)$. By relabelling the suffices of the variables 
$v_1,\ldots ,v_s$, we may suppose that $v_1\equiv v_2\equiv v_3\mmod{\varpi^{b+1}}$, 
provided that we inflate the ensuing estimates by a factor $\binom{s}{3}$. Then, on 
recalling the presumed positivity of the weights $\gra_n$, it follows via orthogonality that
\begin{equation}\label{4.2}
T_1(\xi,\eta)\ll \oint |\grF_a(\bfalp;\xi)^2\grG_b(\bfalp;\eta)\grf_b(\bfalp;\eta)^{2s-3}|
\d\bfalp ,
\end{equation}
in which we write
\begin{equation}\label{4.3}
\grG_b(\bfalp;\eta)=\rho_b(\eta)^{-3}\biggl( \sum_{\substack{1\le \zet\le 
\varpi^{b+1}\\ 
\zet\equiv \eta\mmod{\varpi^b}}}\rho_{b+1}(\zet)\grf_{b+1}(\bfalp;\zet)\biggr)^3.
\end{equation}
We note in this context that our assumption $s\ge k^2$ ensures that $s\ge 3$. In view of 
(\ref{2.15}), an application of H\"older's inequality on the right hand side of (\ref{4.2}) 
yields the bound
$$T_1(\xi,\eta)\ll I_{a,b}(X;\xi,\eta)^{1-3/(2s)}\biggl( \oint |\grF_a(\bfalp;\xi)^2
\grG_b(\bfalp;\eta)^{2s/3}|\d\bfalp \biggr)^{3/(2s)}.$$
Thus, now referring to (\ref{2.17}), a second application of H\"older's inequality leads 
from (\ref{4.1}) to the estimate
$$I_{a,b}(X)\ll I_{a,b}(X)^{1-3/(2s)}T_3^{3/(2s)}+\rho_0(1)^{-4}
\sum_{\xi=1}^{\varpi^a}\sum_{\eta=1}^{\varpi^b}\rho_a(\xi)^2\rho_b(\eta)^2
T_2(\xi,\eta),$$
where
\begin{equation}\label{4.4}
T_3=\rho_0(1)^{-4}\sum_{\xi=1}^{\varpi^a}\sum_{\eta=1}^{\varpi^b}\rho_a(\xi)^2
\rho_b(\eta)^2\oint |\grF_a(\bfalp;\xi)^2\grG_b(\bfalp;\eta)^{2s/3}|\d\bfalp .
\end{equation}
We therefore deduce that
\begin{equation}\label{4.5}
I_{a,b}(X)\ll T_3+\rho_0(1)^{-4}\sum_{\xi=1}^{\varpi^a}\sum_{\eta=1}^{\varpi^b}
\rho_a(\xi)^2\rho_b(\eta)^2T_2(\xi,\eta).
\end{equation}

\par In order to estimate $T_3$, we begin with an application of H\"older's inequality to 
(\ref{4.3}), obtaining
$$|\grG_b(\bfalp;\eta)|^{2s/3}\le \left( \grH_b^{(1)}(\bfalp;\eta)\right)^{s(2s-3)/(3s-3)}
\left( \grH_b^{(s)}(\bfalp;\eta)\right)^{s/(3s-3)},$$
where we write
\begin{equation}\label{4.6}
\grH_b^{(t)}(\bfalp;\eta)=\rho_b(\eta)^{-2t}\sum_{\substack{1\le \zet\le \varpi^{b+1}\\
\zet\equiv \eta\mmod{\varpi^b}}}\rho_{b+1}(\zet)^{2t}|\grf_{b+1}(\bfalp;\zet)|^{2t}.
\end{equation}
A further application of H\"older's inequality consequently conveys us from (\ref{4.4}) to 
the bound
\begin{equation}\label{4.6a}
T_3\le T_4^{(2s-3)/(3s-3)}T_5^{s/(3s-3)},
\end{equation}
in which
\begin{equation}\label{4.7}
T_4=\rho_0(1)^{-4}\sum_{\xi=1}^{\varpi^a}\sum_{\eta=1}^{\varpi^b}
\rho_a(\xi)^2\rho_b(\eta)^2\oint |\grF_a(\bfalp;\xi)^2\grH_b^{(1)}(\bfalp;\eta)^s|
\d\bfalp 
\end{equation}
and
\begin{equation}\label{4.8}
T_5=\rho_0(1)^{-4}\sum_{\xi=1}^{\varpi^a}\sum_{\eta=1}^{\varpi^b}
\rho_a(\xi)^2\rho_b(\eta)^2\oint |\grF_a(\bfalp;\xi)^2\grH_b^{(s)}(\bfalp;\eta)|
\d\bfalp .
\end{equation}

\par The integral within the definition (\ref{4.7}) of $T_4$ counts the number of integral 
solutions of the system (\ref{2.w1}), subject to its attendant conditions, with weight 
(\ref{2.w2}), and subject to the additional condition 
$v_l\equiv w_l\mmod{\varpi^{b+1}}$ $(1\le l\le s)$. Since the weights $\gra_n$ are 
presumed positive, the omission of this last condition merely inflates our estimate for this 
integral, and thus we see that it is bounded above by $I_{a,b}(X;\xi,\eta)$. We thus deduce 
that
\begin{equation}\label{4.9}
T_4\le I_{a,b}(X).
\end{equation}

\par In order to estimate $T_5$, we again recall that the weights $\gra_n$ are presumed 
positive. Thus, when $\zet\equiv \eta\mmod{\varpi^b}$, we have the upper bound
$$\rho_{b+1}(\zet)^2=\sum_{\substack{|n|\le X\\ n\equiv \zet\mmod{\varpi^{b+1}}}}
|\gra_n|^2\le \sum_{\substack{|n|\le X\\ n\equiv \eta\mmod{\varpi^b}}}|\gra_n|^2
=\rho_b(\eta)^2,$$
so that (\ref{4.6}) yields the bound
$$\grH_b^{(s)}(\bfalp;\eta)\le \rho_b(\eta)^{-2}\sum_{\substack{
1\le \zet\le \varpi^{b+1}\\ \zet\equiv \eta\mmod{\varpi^b}}}\rho_{b+1}(\zet)^2
|\grf_{b+1}(\bfalp;\zet)|^{2s}.$$
On substituting this bound into (\ref{4.8}), we infer from (\ref{2.15}) that
\begin{align*}
T_5&\le \rho_0(1)^{-4}\sum_{\xi=1}^{\varpi^a}\sum_{\eta=1}^{\varpi^b}
\sum_{\substack{1\le \zet\le \varpi^{b+1}\\ \zet\equiv \eta\mmod{\varpi^b}}}
\rho_a(\xi)^2\rho_{b+1}(\zet)^2I_{a,b+1}(X;\xi,\zet)\\
&=\rho_0(1)^{-4}\sum_{\xi=1}^{\varpi^a}\sum_{\zet=1}^{\varpi^{b+1}}
\rho_a(\xi)^2\rho_{b+1}(\zet)^2I_{a,b+1}(X;\xi,\zet).
\end{align*}
On recalling (\ref{2.17}), we therefore see that $T_5\le I_{a,b+1}(X)$, and thus we 
deduce from (\ref{4.6a}) and (\ref{4.9}) that
$$T_3\le (I_{a,b}(X))^{(2s-3)/(3s-3)}(I_{a,b+1}(X))^{s/(3s-3)}.$$
Substituting this bound into (\ref{4.5}) and disentangling the result, we conclude thus far 
that
\begin{equation}\label{4.10}
I_{a,b}(X)\ll I_{a,b+1}(X)+\rho_0(1)^{-4}\sum_{\xi=1}^{\varpi^a}
\sum_{\eta=1}^{\varpi^b}\rho_a(\xi)^2\rho_b(\eta)^2T_2(\xi,\eta).
\end{equation}

\par Now is the moment to estimate the contribution of $T_2(\xi,\eta)$. Let 
$\bfx,\bfy,\bfv,\bfw$ be a solution contributing to $T_2(\xi,\eta)$. Since $s\ge k^2\ge 2k$, 
and no three of the integers $v_1,\ldots ,v_s$ lie in a common congruence class modulo 
$\varpi^{b+1}$, it follows that $v_1,\ldots ,v_s$ together occupy at least $k$ distinct 
congruence classes modulo $\varpi^{b+1}$. By relabelling the suffices of the variables 
$v_1,\ldots ,v_s$, we may suppose that $v_1,\ldots ,v_k$ lie in distinct congruence classes 
modulo $\varpi^{b+1}$, provided that we inflate the ensuing estimates by a factor 
$\binom{s}{k}$. Then, again recalling that the weights $\gra_n$ may be assumed positive, 
it follows via orthogonality that
$$T_2(\xi,\eta)\ll \oint |\grF_a(\bfalp;\xi)|^2\grF_b(\bfalp;\eta)\grf_b(\bfalp;\eta)^{s-k}
\grf_b(-\bfalp;\eta)^s\d\bfalp .$$
By reference to (\ref{2.15}) and (\ref{2.16}), an application of H\"older's inequality shows 
that
$$T_2(\xi,\eta)\ll I_{a,b}(X;\xi,\eta)^{1-k/(2s)}K_{a,b}(X;\xi,\eta)^{k/(2s)}.$$
Here we recall that $s=uk$. Thus, appealing to H\"older's inequality yet again, we conclude 
via (\ref{2.17}) and (\ref{2.18}) that
$$\rho_0(1)^{-4}\sum_{\xi=1}^{\varpi^a}\sum_{\eta=1}^{\varpi^b}\rho_a(\xi)^2
\rho_b(\eta)^2T_2(\xi,\eta)\ll I_{a,b}(X)^{1-k/(2s)}K_{a,b}(X)^{k/(2s)}.$$
On substituting this bound into (\ref{4.10}), we obtain
$$I_{a,b}(X)\ll I_{a,b+1}(X)+I_{a,b}(X)^{1-k/(2s)}K_{a,b}(X)^{k/(2s)},$$
and the conclusion of the lemma follows by disentangling.
\end{proof}

By iterating Lemma \ref{lemma4.1}, we are able to estimate $I_{a,b}(X)$ in terms of 
conditioned mean values of type $K_{a,b+h}(X)$ $(h\ge 0)$.

\begin{lemma}\label{lemma4.2} Let $a$ and $b$ be integers with $0\le a<b$, and put 
$H=4(b-a)$. Suppose that $b+H\le (3\tet)^{-1}$. Then there exists an integer $h$ with 
$0\le h<H$ having the property that
$$I_{a,b}(X)\ll K_{a,b+h}(X)+M^{-sH/4}(X/M^b)^s(X/M^a)^{k-k(k+1)/2+\Lam}.$$
\end{lemma}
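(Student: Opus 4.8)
The plan is to iterate Lemma~\ref{lemma4.1} exactly $H=4(b-a)$ times, keeping careful track of the ``error'' terms $K_{a,b+h}(X)$ that are thrown off at each stage, and then to absorb the final leftover term using the crude estimate of Lemma~\ref{lemma3.3}. More precisely, Lemma~\ref{lemma4.1} asserts that for each $j\ge 0$ with $b+j<(3\tet)^{-1}$ one has
$$I_{a,b+j}(X)\ll K_{a,b+j}(X)+I_{a,b+j+1}(X),$$
so applying this successively for $j=0,1,\ldots,H-1$ yields
$$I_{a,b}(X)\ll \sum_{h=0}^{H-1}K_{a,b+h}(X)+C^{H}I_{a,b+H}(X)$$
for some absolute constant $C$ depending only on $s$ and $k$ (arising from the implied constants in each application; since $H=4(b-a)$ and $b+H\le(3\tet)^{-1}$ this is harmless, as such constants are swallowed by $M^\eps$-type slack, but to be safe one can phrase the iteration so that the constant is bounded by $X^{\eps}$, or simply note $C^H$ is absorbed since $H\ll \tet^{-1}$ and each step costs only a bounded factor). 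The first obstacle, and the reason for the peculiar exponent $M^{-sH/4}$, is the treatment of the tail term $I_{a,b+H}(X)$: this must be bounded by something genuinely small in $M$, not merely by another conditioned mean value.

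To handle the tail, I would bound $I_{a,b+H}(X)$ trivially using Lemma~\ref{lemma3.2} with $c=a$ and $d=b+H$. Indeed, by orthogonality $I_{a,b+H}(X;\xi,\eta)$ counts solutions of~(\ref{2.w1}) subject to the relevant congruence conditions; relaxing the conditioning conditions on $\bfx,\bfy$ (permissible since the weights $\gra_n$ are positive) shows
$$I_{a,b+H}(X;\xi,\eta)\le \oint|\grf_a(\bfalp;\xi)^{2k}\grf_{b+H}(\bfalp;\eta)^{2s}|\d\bfalp \ll \bigl((X/M^a)^k(X/M^{b+H})^s\bigr)^{(\lam+\del)/(s+k)}.$$
Summing over $\xi,\eta$ against the weights $\rho_a(\xi)^2\rho_{b+H}(\eta)^2$ as in the proof of Lemma~\ref{lemma3.3}, using $\sum_\xi\rho_a(\xi)^2=\sum_\eta\rho_{b+H}(\eta)^2=\rho_0(1)^2$, and recalling $\Lam=\lam-(s+k)+k(k+1)/2$, one finds (exactly as in Lemma~\ref{lemma3.3})
$$I_{a,b+H}(X)\ll (X/M^a)^{k-k(k+1)/2}(X/M^{b+H})^s (M^{s(b+H-a)/(s+k)})^{k(k+1)/2}X^{\Lam+\del}.$$
Now the point is that $X/M^{b+H}=(X/M^b)M^{-H}$, so this term carries a factor $M^{-sH}$ relative to the target $(X/M^b)^s(X/M^a)^{k-k(k+1)/2+\Lam}$; against this we must pay back $M^{s(b+H-a)k(k+1)/(2(s+k))}$, which since $H=4(b-a)\ge 4$ and $b+H-a\le (3\tet)^{-1}$ is far smaller than $M^{sH}$. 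The hard part is simply to check the bookkeeping: one wants the net exponent of $M$ on this tail term to be at most $-sH/4$ after factoring out the target magnitude. Crudely, $sH - s(b+H-a)k(k+1)/(2(s+k))$; using $s=uk\ge k^2$ one has $k(k+1)/(2(s+k))\le k(k+1)/(2k^2+2k)=1/2$, and $b+H-a\le 2H$ in the regime where $b-a\le H/4\le H$ forces $b-a$ small relative to $H$ — wait, here $b-a = H/4$ so $b+H-a = 5(b-a) = 5H/4$; thus the penalty exponent is at most $s\cdot\frac{5H}{4}\cdot\frac12 = \frac{5sH}{8}$, giving net $M^{-sH+5sH/8}=M^{-3sH/8}\le M^{-sH/4}$ comfortably. (Any reasonable slack here suffices; also $X^{\Lam+\del}$ is controlled by $X^{\Lam}\cdot X^{\del}$ and $X^{\del}<M^{1/N}$, absorbed into the implied constant or a trivially weaker exponent.)

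Finally, I would combine the two pieces: the sum $\sum_{h=0}^{H-1}K_{a,b+h}(X)$ is $\ll \max_{0\le h<H}K_{a,b+h}(X)$ up to the factor $H\ll\tet^{-1}$, which is absorbed; and relabelling, there exists some $h$ with $0\le h<H$ for which $I_{a,b}(X)\ll K_{a,b+h}(X)+M^{-sH/4}(X/M^b)^s(X/M^a)^{k-k(k+1)/2+\Lam}$, using the hypothesis $\Lam\ge0$? Actually $\Lam\ge0$ is \emph{not} assumed in Lemma~\ref{lemma4.2}; but it is also not needed, since the tail estimate above used only Lemma~\ref{lemma3.2}, which requires merely $\max\{3a\tet,3(b+H)\tet\}\le1$ — guaranteed by $b+H\le(3\tet)^{-1}$ and $a<b$. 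Thus the whole argument goes through unconditionally in $\Lam$. The main obstacle, then, is not any single hard estimate but rather organizing the iteration so that (i) each of the $H$ invocations of Lemma~\ref{lemma4.1} remains legal, i.e.\ $b+j\le(3\tet)^{-1}$ throughout, which follows from $b+H\le(3\tet)^{-1}$; (ii) the accumulated constant $C^H$ and the count $H$ of error terms are both negligible, which follows since $H\ll\tet^{-1}\ll X^{\eps}$; and (iii) the trivial tail bound genuinely beats $M^{-sH/4}$ times the target, which is the computation sketched above.
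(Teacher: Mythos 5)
Your proposal follows the paper's route at the structural level: iterate Lemma \ref{lemma4.1} to reach $\sum_{h=0}^{H-1}K_{a,b+h}(X)+I_{a,b+H}(X)$, then discard the conditioning on the tail (positivity of the weights) and apply Lemma \ref{lemma3.2}. The gap is in the final bookkeeping for the tail. Having recast the Lemma \ref{lemma3.2} estimate in the Lemma \ref{lemma3.3} shape, carrying the factor $X^{\Lam+\del}$, you compare it with the target $M^{-sH/4}(X/M^b)^s(X/M^a)^{k-k(k+1)/2+\Lam}$ as though $X^{\Lam}$ and $(X/M^a)^{\Lam}$ were interchangeable; they differ by $M^{a\Lam}$, and this factor never enters your ``net exponent'' computation. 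What your comparison actually requires is $a\Lam+\tfrac{s(b+H-a)k(k+1)}{2(s+k)}\le \tfrac{3}{4}sH$ (up to negligible $\del$-terms), and under the hypotheses of the lemma this can fail: $b=a+1$ with $a$ arbitrarily large is permitted, so $H=4$ and your slack $\tfrac18 sH$ is bounded, while a priori one only knows $\Lam\le \tfrac12 k(k+1)$, so $a\Lam$ can be arbitrarily large (for instance $k=2$, $s=4$, $b=a+1$, $\Lam=1$, $a\ge 3$ already breaks the inequality). Thus your argument yields only a version of the error term with $X^{\Lam}$ in place of $(X/M^a)^{\Lam}$, weaker by $M^{a\Lam}$, and it is the stated form that Lemma \ref{lemma5.3} needs downstream. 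The paper avoids this by keeping the $\lam$-power normalised against $X/M^a$ rather than $X$: it writes the Lemma \ref{lemma3.2} bound as $X^{\del}(X/M^a)^{\lam-s}(X/M^b)^sM^{\Ome}$, notes that $\lam-s=k-\tfrac12 k(k+1)+\Lam$, and verifies $\Ome\le -\del\tet^{-1}-\tfrac14 sH$.

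Relatedly, your parenthetical claim that no hypothesis of the shape $\Lam\ge 0$ is needed is incorrect even for your own derivation: the passage from the Lemma \ref{lemma3.2} bound $((X/M^a)^k(X/M^{b+H})^s)^{(\lam+\del)/(s+k)}$ to your display containing $X^{\Lam+\del}$ goes in the right direction only when $\lam+\del\ge s+k-\tfrac12 k(k+1)$, and the paper's $\Ome$-computation likewise supposes $\lam\ge \tfrac12(s+k)$. The supposition must be invoked, though it is harmless: $\lam\ge s+k-\tfrac12 k(k+1)$ holds unconditionally (take $(\gra_n)=(1)$ and use (\ref{1.6})), and in any event the lemma is only applied in a regime where $\Lam\ge 0$ may be assumed. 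Your worry about the accumulated implicit constants $C^H$, on the other hand, is a non-issue: $H\ll \tet^{-1}$ is bounded in terms of $N$ and $k$ alone, and implicit constants are permitted to depend on these.
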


\begin{proof} By repeated application of Lemma \ref{lemma4.1}, we obtain the bound
\begin{equation}\label{4.w1}
I_{a,b}(X)\ll \sum_{h=0}^{H-1}K_{a,b+h}(X)+I_{a,b+H}(X).
\end{equation}
Let $\xi$ and $\eta$ be fixed integers with $1\le \xi\le \varpi^a$ and $1\le \eta \le 
\varpi^{b+H}$. Then on recalling our presumption that the weights $\gra_n$ are positive, 
it follows from (\ref{2.15}) via orthogonality that
$$I_{a,b+H}(X;\xi,\eta)\le \oint |\grf_a(\bfalp;\xi)^{2k}\grf_{b+H}(\bfalp;\eta)^{2s}|
\d\bfalp .$$
We therefore deduce from Lemma \ref{lemma3.2} that
\begin{align}
I_{a,b+H}(X;\xi,\eta)&\ll \left( (X/M^a)^k(X/M^{b+H})^s\right)^{(\lam+\del)/(s+k)}
\notag \\
&\ll X^\del (X/M^a)^{\lam-s}(X/M^b)^sM^\Ome ,\label{4.w3}
\end{align}
in which
$$\Ome =\lam\left( a-\frac{ak}{s+k}-\frac{bs}{s+k}\right) +s(b-a)-\frac{Hs\lam}{s+k}.
$$
We may suppose that $\lam\ge s+k-k(k+1)/2\ge \tfrac{1}{2}(s+k)$. Then since 
$b\ge a$, we obtain the estimate
$$\Ome \le -\frac{s(b-a)\lam}{s+k}+s(b-a)-\tfrac{1}{2}Hs\le \tfrac{1}{2}s(b-a-H).$$
But $H=4(b-a)$, and so we discern from (\ref{2.7}) that
$$\Ome\le -\tfrac{3}{8}Hs\le -\del \tet^{-1}-\tfrac{1}{4}Hs.$$
Substituting this estimate into (\ref{4.w3}), we see that
$$I_{a,b+H}(X;\xi,\eta)\ll M^{-sH/4}(X/M^a)^{\lam-s}(X/M^b)^s.$$

\par We next recall (\ref{2.17}), deducing that
$$I_{a,b+H}(X)\ll \rho M^{-sH/4}(X/M^a)^{\lam-s}(X/M^b)^s,$$
where
$$\rho=\rho_0(1)^{-4}\biggl( \sum_{\xi=1}^{\varpi^a}\rho_a(\xi)^2\biggr) 
\biggl( \sum_{\eta=1}^{\varpi^{b+H}}\rho_{b+H}(\eta)^2\biggr)=1.$$
The conclusion of the lemma consequently follows from (\ref{4.w1}).
\end{proof}

We next introduce a lemma that initiates the iterative process.

\begin{lemma}\label{lemma4.3} There exists a prime $\varpi$ with $M<\varpi\le 2M$ for 
which
$$U(X)\ll M^sI_{0,1}(X).$$
\end{lemma}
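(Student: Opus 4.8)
The plan is to relate the mean value $U(X)=U_{s+k}(X;\bfgra)$ to $I_{0,1}(X)$ by sorting the $2(s+k)$ underlying integer variables according to their residue class modulo $\varpi$, and then averaging over the choice of prime $\varpi$. First I would recall that, by orthogonality, $U(X)$ counts integral solutions $\bfx,\bfy$ with $|\bfx|,|\bfy|\le X$ of the system $\sum_{i=1}^{s+k}x_i^j=\sum_{i=1}^{s+k}y_i^j$ $(1\le j\le k)$, each weighted by $\rho_0(1)^{-2(s+k)}\prod\gra_{x_i}\gra_{y_i}$. Splitting off the first $k$ of the $x$-variables and the first $k$ of the $y$-variables, I would pass to the generating-function identity
$$U(X)=\oint |\ftil_\bfgra(\bfalp;X)|^{2k}|\ftil_\bfgra(\bfalp;X)|^{2s}\d\bfalp,$$
and then decompose the inner sum $\ftil_\bfgra(\bfalp;X)=\rho_0(1)^{-1}\sum_{|n|\le X}\gra_n e(\psi(n;\bfalp))$ according to $n\equiv\xi\mmod\varpi$, giving $\rho_0(1)\ftil_\bfgra(\bfalp;X)=\sum_{\xi=1}^\varpi\rho_1(\xi)\grf_1(\bfalp;\xi)$.

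The crux is a pigeonholing step. Raising the first factor $|\sum_\xi\rho_1(\xi)\grf_1(\bfalp;\xi)|^{2k}$ to a power and expanding, one sees that the diagonal-type contribution where all $2k$ summation indices $\xi$ coincide is bounded below — relative to the full expression — by a factor of roughly $\varpi^{-(2k-1)}$ after Hölder/Cauchy, but it is cleaner to argue combinatorially: in any solution of the Diophantine system, the $2k$ variables $x_1,\dots,x_k,y_1,\dots,y_k$ occupy some residue classes modulo $\varpi$. The point is that, on average over primes $\varpi\in(M,2M]$, the proportion of solutions in which, say, $x_1\equiv y_1\equiv\cdots$ fails to collapse is controlled, so that one may extract a prime $\varpi$ for which the contribution with $x_1,\dots,x_k,y_1,\dots,y_k$ all lying in a single common residue class $\xi\pmod\varpi$ constitutes a positive proportion of $U(X)$, after inflating by $M^{2k}$ or so for the choice of the $2k$-fold residue data. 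Concretely: by Cauchy–Schwarz in the form
$$\Bigl(\sum_{\xi=1}^\varpi\rho_1(\xi)\grf_1(\bfalp;\xi)\Bigr)^{2}\ll \varpi\sum_{\xi=1}^\varpi\rho_1(\xi)^2|\grf_1(\bfalp;\xi)|^2,$$
applied to bound $|\ftil_\bfgra|^{2k}$ by $\rho_0(1)^{-2k}(\varpi\sum_\xi\rho_1(\xi)^2|\grf_1(\bfalp;\xi)|^2)^k$, and then expanding the $k$-th power, one obtains
$$U(X)\ll \varpi^k\rho_0(1)^{-2k-2s}\sum_{\xi_1,\dots,\xi_k}\Bigl(\prod_i\rho_1(\xi_i)^2\Bigr)\oint\prod_i|\grf_1(\bfalp;\xi_i)|^2\,|{\textstyle\sum_\xi}\rho_1(\xi)\grf_1(\bfalp;\xi)|^{2s}\d\bfalp.$$

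To finish I would simplify the off-diagonal terms $\xi_i$ not all equal by noting that $\prod_i|\grf_1(\bfalp;\xi_i)|^2\le\prod_i\tfrac12(|\grf_1(\bfalp;\xi_i)|^{2k}+\cdots)$ — better, apply the AM–GM / Hölder inequality $\prod_{i=1}^k|\grf_1(\bfalp;\xi_i)|^2\le\tfrac1k\sum_{i=1}^k|\grf_1(\bfalp;\xi_i)|^{2k}$ is too lossy; instead use that by symmetry one may restrict to the single index $\xi$ maximizing the resulting integral, picking up a factor $k$ at most, and then recognize $|\grf_1(\bfalp;\xi)|^{2k}$ together with $|\sum_\xi\rho_1(\xi)\grf_1(\bfalp;\xi)|^{2s}/\rho_0(1)^{2s}=|\ftil_\bfgra(\bfalp;X)|^{2s}$; finally replacing $|\ftil_\bfgra|^{2s}$ by the full sum over residues of $|\grf_1|^{2s}$ via Hölder in the $\xi$-sum reconstitutes exactly the definition (2.15)–(2.17) of $I_{0,1}(X)$, at the cost of $\varpi^k\ll M^{2k}$; the statement as given records this more simply as $M^s$ after noting $s=uk\ge k^2\ge 2k$ absorbs the discrepancy, or by being slightly more careful in the Hölder step to lose only $M^s$. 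The main obstacle is doing the bookkeeping of the residue-class decomposition so that one lands precisely on $I_{0,1}(X)$ with the normalising factors $\rho_0(1)^{-4}\sum_{\xi}\sum_\eta\rho_0(1)^2\rho_1(\eta)^2$ matching up as in (2.17); the averaging over $\varpi$ is only needed to guarantee existence of a prime for which no pathological near-equidistribution phenomenon inflates the count, and this is a soft Prime-Number-Theorem-style argument rather than the technical heart of the matter.
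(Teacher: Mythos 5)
There is a genuine gap, and it lies exactly where you declared the argument to be ``soft''. The mean value $I_{0,1}(X)$ you must reach is not a sum over solutions in which $x_1,\ldots,x_k,y_1,\ldots,y_k$ occupy a single common residue class modulo $\varpi$: by (2.14)--(2.15), $I_{0,1}(X;\xi,\eta)$ carries the factor $|\grF_0(\bfalp;\xi)|^2$, which forces the $k$-tuples $(x_1,\ldots,x_k)$ and $(y_1,\ldots,y_k)$ to lie in \emph{pairwise distinct} residue classes modulo $\varpi$ (the well-conditioning encoded by $\Xi_0(\xi)$). Your Cauchy--Schwarz step $\bigl|\sum_\xi\rho_1(\xi)\grf_1(\bfalp;\xi)\bigr|^2\ll\varpi\sum_\xi\rho_1(\xi)^2|\grf_1(\bfalp;\xi)|^2$, followed by choosing a maximising $\xi$, produces unrestricted (indeed, ultimately coincident) residue data for the $2k$-block, which is the opposite of what is required; no amount of bookkeeping with the normalising weights will convert that into the $\grF_0$ factor, and without the well-conditioning the ensuing congruencing step (Lemmata 5.1--5.2) has nothing to work with. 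Moreover, before any prime can place $x_1,\ldots,x_k$ in distinct classes one must first remove the solutions in which some $x_i=x_j$, for which no prime whatever separates the variables; the paper handles this contribution ($T_0$) by a H\"older argument showing $T_0\ll U(X)^{1-1/(s+k)}$, and your proposal contains no analogue of this step.

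The prime selection itself is also not a Prime-Number-Theorem averaging over ``proportions of solutions''. It is a pointwise divisor argument: for each surviving solution one forms $\Del(\bfx)=\prod_{1\le i<j\le k}|x_i-x_j|\in(0,X^{k(k-1)})$, takes $\calP$ to be the smallest $\lfloor k^3/\tet\rfloor+1$ primes exceeding $M$ (all at most $2M$ by the PNT), notes $\prod_{p\in\calP}p>X^{k^3}>\Del(\bfx)$, and concludes that some $\varpi\in\calP$ divides no factor $x_i-x_j$; summing over $\varpi\in\calP$ and applying Schwarz against $U(X)^{1/2}$ then fixes a single prime with $U(X)\ll I_{0,0}(X;1,1)$. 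Only after this does the step you correctly anticipated occur: the $2s$-block $\grf_0(\bfalp;1)^{2s}$ is split into classes modulo $\varpi$ by H\"older, and it is this step (not the treatment of the $2k$-block) that costs the factor $\varpi^s\ll M^s$ and lands on $I_{0,1}(X)$ via (2.17). So your outline captures the final H\"older concentration but is missing the two essential ingredients of the lemma: the removal of the repeated-variable contribution and the divisor-based choice of $\varpi$ securing the well-conditioned structure of $\grF_0$.
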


\begin{proof} By orthogonality, it follows from (\ref{2.3}) that $U(X)$ counts the number of 
integral solutions $\bfx,\bfy$ of the system
\begin{equation}\label{4.11}
\sum_{i=1}^{s+k}(x_i^j-y_i^j)=0\quad (1\le j\le k),
\end{equation}
with $|\bfx|,|\bfy|\le X$, each solution being counted with weight
$$\rho_0(1)^{-2s-2k}\prod_{i=1}^{s+k}\gra_{x_i}\gra_{y_i}.$$
Let $T_0$ denote the contribution of such solutions in which $x_i=x_j$ for some $i$ and $j$ 
with $1\le i<j\le k$, and let $T_1$ denote the corresponding contribution with $x_i=x_j$ 
for no $i$ and $j$ with $1\le i<j\le k$. Then we have
\begin{equation}\label{4.13}
U(X)=T_0+T_1.
\end{equation}

\par We recall again that the weights $\gra_n$ are presumed positive. Then by relabelling 
the suffices of $x_1,\ldots ,x_k$, we find that $T_0$ is bounded above by 
$\binom{k}{2}T_2$, where $T_2$ denotes the number of solutions of the system
$$2x_1^j+\sum_{i=3}^{s+k}x_i^j=\sum_{l=1}^{s+k}y_l^j\quad (1\le j\le k),$$
with $|\bfx|,|\bfy|\le X$, each solution being counted with weight
$$\rho_0(1)^{-2s-2k}\gra_{x_1}^2\biggl( \prod_{i=3}^{s+k}\gra_{x_i}\biggr) \biggl( 
\prod_{l=1}^{s+k}\gra_{y_l}\biggr) .$$
Put $\grb_n=\gra_n^2$ for $|n|\le X$. Then on recalling (\ref{1.2}), it follows via 
orthogonality and the triangle inequality that
$$T_0\ll \rho_0(1)^{-2}\oint |f_\bfgrb(2\bfalp;X)\ftil_\bfgra(\bfalp;X)^{2s+2k-2}|
\d\bfalp .$$
A second application of the triangle inequality reveals that
$$|f_\bfgrb(2\bfalp;X)|\le \sum_{|n|\le X}|\grb_n|=
\sum_{|n|\le X}|\gra_n|^2=\rho_0(1)^2.$$
Thus, an application of H\"older's inequality gives the bound
$$T_0\ll \biggl( \oint |\ftil_\bfgra (\bfalp;X)|^{2s+2k}\d\bfalp \biggr)^{1-1/(s+k)}
=U(X)^{1-1/(s+k)}.$$
On substituting this estimate into (\ref{4.13}) and disentangling, we deduce that
\begin{equation}\label{4.14}
U(X)\ll 1+T_1.
\end{equation}

\par Consider next a solution $\bfx,\bfy$ of (\ref{4.11}) contributing to $T_1$. Write
$$\Del(\bfx)=\prod_{1\le i<j\le k}|x_i-x_j|,$$
and note that $0<\Del(\bfx)<X^{k(k-1)}$. Let $\calP$ denote the set consisting of the 
smallest $[k^3/\tet]+1$ prime numbers exceeding $M$. It follows from the Prime Number 
Theorem that none of these primes exceed $2M$. Moreover, one has
$$\prod_{p\in \calP}p>M^{k^3/\tet}=X^{k^3}>\Del(\bfx),$$
and hence one at least of the primes belonging to $\calP$ does not divide $\Del(\bfx)$. In 
particular, there is a prime $\varpi\in \calP$ for which $x_i\equiv x_j\mmod{\varpi}$ for 
no $i$ and $j$ with $1\le i<j\le k$. Again recalling that the weights $\gra_n$ are presumed 
positive, it follows from (\ref{2.14}) via orthogonality that
$$T_1\ll \sum_{\varpi\in \calP}\oint \grF_0(\bfalp;1)\ftil_\bfgra(\bfalp;X)^s
\ftil_\bfgra(-\bfalp;X)^{s+k}\d\bfalp .$$
Therefore, as a consequence of Schwarz's inequality, one finds via (\ref{2.3}) and 
(\ref{2.15}) that there exists a prime $\varpi\in \calP$ for which
\begin{align*}
T_1&\ll \biggl( \oint |\grF_0(\bfalp;1)^2\grf_0(\bfalp;1)^{2s}|\d\bfalp \biggr)^{1/2}
\biggl( \oint |\ftil_\bfgra (\bfalp;X)|^{2s+2k}\d\bfalp \biggr)^{1/2}\\
&=I_{0,0}(X;1,1)^{1/2}U(X)^{1/2}.
\end{align*}
Thus we conclude by means of (\ref{4.14}) that
$$U(X)\ll 1+I_{0,0}(X;1,1)^{1/2}U(X)^{1/2},$$
whence
\begin{equation}\label{4.15}
U(X)\ll I_{0,0}(X;1,1).
\end{equation}
Here, we have made use of a trivial lower bound for $I_{0,0}(X;1,1)$ obtained by 
considering the diagonal contribution in combination with the assumption that the 
coefficients $\gra_n$ are positive.\par

Next, we split the summation in (\ref{2.2}) into arithmetic progressions modulo $\varpi$. 
Thus we obtain
$$\grf_0(\bfalp;1)=\rho_0(1)^{-1}\sum_{\xi=1}^\varpi \rho_1(\xi)\grf_1(\bfalp;\xi).$$
By applying H\"older's inequality, we deduce that
$$|\grf_0(\bfalp;1)|^{2s}\le \rho_0(1)^{-2s}\biggl( \sum_{\xi=1}^\varpi \rho_1(\xi)^2
\biggr)^{s-1}\biggl( \sum_{\xi=1}^\varpi 1\biggr)^s\biggl( \sum_{\xi=1}^\varpi 
\rho_1(\xi)^2|\grf_1(\bfalp;\xi)|^{2s}\biggr) .$$
But
$$\sum_{\xi=1}^\varpi \rho_1(\xi)^2=\rho_0(1)^2,$$
and hence we deduce from (\ref{2.15}) and (\ref{2.17}) that
\begin{align*}
I_{0,0}(X;1,1)&\ll \rho_0(1)^{-2}M^s\sum_{\xi=1}^\varpi \rho_1(\xi)^2\oint 
|\grF_0(\bfalp;1)^2\grf_1(\bfalp;\xi)^{2s}|\d\bfalp \\
&=\rho_0(1)^{-2}M^s\sum_{\xi=1}^\varpi \rho_1(\xi)^2I_{0,1}(X;\xi,1)\\
&=M^sI_{0,1}(X).
\end{align*}
The conclusion of the lemma is now immediate from (\ref{4.15}).
\end{proof}

We now fix the prime $\varpi$, once and for all, in accordance with Lemma \ref{lemma4.3}. 
Finally, we obtain the starting point of our iteration in the next lemma.

\begin{lemma}\label{lemma4.4} There exists an integer $h\in \{0,1,2,3\}$ for which one 
has
$$U(X)\ll M^sK_{0,1+h}(X).$$
\end{lemma}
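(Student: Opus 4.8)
The plan is to chain together the results already proved in \S4.  Lemma~\ref{lemma4.3} fixes a prime $\varpi$ with $M<\varpi\le 2M$ for which $U(X)\ll M^sI_{0,1}(X)$, so it suffices to bound $I_{0,1}(X)$ by a combination of $K_{0,1+h}(X)$ terms and an acceptable error.  This is precisely the content of Lemma~\ref{lemma4.2} applied with $a=0$ and $b=1$: in that case $H=4(b-a)=4$, and provided $b+H=5\le (3\tet)^{-1}$ (which holds trivially by the choice $\tet=(Nk)^{-3N}$ in (\ref{2.7}), $N$ being large), the lemma furnishes an integer $h$ with $0\le h<4$, i.e.\ $h\in\{0,1,2,3\}$, for which
\begin{equation*}
I_{0,1}(X)\ll K_{0,1+h}(X)+M^{-s}(X/M)^sX^{k-k(k+1)/2+\Lam}.
\end{equation*}
(Here I have substituted $H=4$, so the error exponent $-sH/4$ becomes $-s$.)

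First I would combine these two bounds to get
\begin{equation*}
U(X)\ll M^sK_{0,1+h}(X)+M^s\cdot M^{-s}(X/M)^sX^{k-k(k+1)/2+\Lam}
= M^sK_{0,1+h}(X)+(X/M)^sX^{k-k(k+1)/2+\Lam}.
\end{equation*}
The task then reduces to absorbing the second term.  The point is that, by (\ref{2.22}) and (\ref{2.23}), one has $U(X)=\llbracket U(X)\rrbracket X^{s+k-k(k+1)/2}>X^{\Lam-\del}X^{s+k-k(k+1)/2}$, so $U(X)\gg X^{s+k-k(k+1)/2+\Lam-\del}$.  Meanwhile the error term is
\begin{equation*}
(X/M)^sX^{k-k(k+1)/2+\Lam}=X^{s+k-k(k+1)/2+\Lam}M^{-s},
\end{equation*}
which is smaller than $U(X)$ by a factor $M^{-s}X^{-\del}=X^{-s\tet-\del}$, hence $o(U(X))$ since $X$ is large (recall $M=X^\tet$ and $\del$ is tiny compared to $\tet$).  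Therefore the error term can be subtracted from both sides — more precisely, from $U(X)\ll M^sK_{0,1+h}(X)+\tfrac12 U(X)$ one disentangles to obtain $U(X)\ll M^sK_{0,1+h}(X)$, which is exactly the assertion of Lemma~\ref{lemma4.4}.

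I expect the only genuine subtlety to be the bookkeeping needed to justify that the error term is dominated by $U(X)$: this uses the lower bound (\ref{2.8})/(\ref{2.22}) for $U(X)$ together with the relative sizes of $M$, $\tet$, $\del$ fixed in (\ref{2.7}).  There is also a minor verification that the hypothesis $b+H\le(3\tet)^{-1}$ of Lemma~\ref{lemma4.2} is met, but with $b+H=5$ and $(3\tet)^{-1}=\tfrac13(Nk)^{3N}$ enormous, this is automatic for $N$ sufficiently large in terms of $s$ and $k$.  Everything else is a direct concatenation of Lemmata~\ref{lemma4.3} and~\ref{lemma4.2} followed by a routine disentangling step.
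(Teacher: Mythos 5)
Your argument is exactly the paper's proof: apply Lemma \ref{lemma4.2} with $a=0$, $b=1$, combine with Lemma \ref{lemma4.3}, and absorb the error term $X^{s+k-k(k+1)/2+\Lam}M^{-s}$ using the lower bound $U(X)>X^{s+k-k(k+1)/2+\Lam-\del}$ from (\ref{2.22}) together with $\del$ being tiny compared with $\tet$. The only blemish is a harmless sign slip in your bookkeeping: the error term is at most $X^{\del}M^{-s}\,U(X)$ rather than $X^{-\del}M^{-s}\,U(X)$, but since $s\tet-\del\ge \del$ this is still $O(X^{-\del}U(X))$ and the disentangling goes through as you describe.
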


\begin{proof} According to Lemma \ref{lemma4.2}, there exists an integer $h$ with 
$0\le h<4$ having the property that
$$I_{0,1}(X)\ll K_{0,1+h}(X)+M^{-s}(X/M)^sX^{k-k(k+1)/2+\Lam}.$$
Since we may suppose that $M>X^\del$, it follows from Lemma \ref{lemma4.3} that
$$U(X)\ll M^sK_{0,1+h}(X)+X^{s+k-k(k+1)/2+\Lam-2\del}.$$
But in view of (\ref{2.22}), we have
$$U(X)>X^{s+k-k(k+1)/2+\Lam-\del},$$
and hence we arrive at the upper bound
$$U(X)\ll M^sK_{0,1+h}(X)+X^{-\del}U(X).$$
The conclusion of the lemma is now immediate.
\end{proof}

\section{The efficient congruencing step} The mean value $K_{a,b}(X;\xi,\eta)$ is subject 
to a powerful congruence condition on its underlying variables. In this section, we uncover 
this condition and convert it into one suitable for iteration. We begin with some preliminary 
discussion of congruences. Denote by $\calB_{a,b}(\bfm;\xi,\eta)$ the set of solutions of 
the system of congruences
\begin{equation}\label{5.1}
\sum_{i=1}^k(z_i-\eta)^j\equiv m_j\mmod{\varpi^{jb}}\quad (1\le j\le k),
\end{equation}
with $1\le \bfz\le \varpi^{kb}$ and $\bfz\equiv \bfxi\mmod{\varpi^{a+1}}$ for some 
$\bfxi\in \Xi_a(\xi)$.

\begin{lemma}\label{lemma5.1} Suppose that $a$ and $b$ are non-negative integers with 
$b>a$. Then
$$\max_{1\le \xi\le \varpi^a}\max_{1\le \eta\le \varpi^b}\text{card}\left( 
\calB_{a,b}(\bfm;\xi,\eta)\right)\le k!\varpi^{\frac{1}{2}k(k-1)(a+b)}.$$
\end{lemma}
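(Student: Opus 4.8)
The plan is to count solutions of the system \eqref{5.1} by peeling off the congruences one variable at a time, exploiting the fact that the underlying $k$-tuple $\bfz$ is drawn from $\Xi_a(\xi)$, so its components lie in $k$ \emph{distinct} residue classes modulo $\varpi^{a+1}$. Fix $\xi$ and $\eta$, and fix a target vector $\bfm$. I would first reduce to the situation where the congruences are read modulo $\varpi^{kb}$ throughout: the $j$-th congruence in \eqref{5.1} is a condition modulo $\varpi^{jb}$, and there are $\prod_{j=1}^k \varpi^{(k-j)b}=\varpi^{\frac12 k(k-1)b}$ lifts of any solution modulo the smaller moduli to one modulo $\varpi^{kb}$; but it is cleaner to keep track of the variables $z_i$ directly and simply bound, for each choice of $(z_1,\dots,z_{i-1})$, the number of admissible $z_i$.

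The key step is a Hensel-type / Vandermonde argument. Suppose $z_1,\dots,z_{i-1}$ have been chosen. Write $w_l = z_l-\eta$. Given the first $i$ power-sum congruences, the quantity $w_i^j + (\text{lower-index terms}) \equiv m_j$ pins down the first $i$ symmetric functions of $w_1,\dots,w_i$ to various prime-power moduli; equivalently, $w_i$ is constrained to be a root modulo an appropriate power of $\varpi$ of a polynomial whose leading behaviour is governed by the differences $w_i-w_l$ ($l<i$). Because $\bfz\in\Xi_a(\xi)$, each difference $z_i-z_l$ is a unit times a bounded power of $\varpi$ — in fact $\varpi\nmid (z_i-z_l)/\varpi^{a}$ in the relevant sense, since the $\xi_i$ are distinct mod $\varpi^{a+1}$ — so the ``Vandermonde'' factor $\prod_{l<i}(w_i-w_l)$ has $\varpi$-adic valuation exactly $(i-1)a$ (or at most, for the classes that happen to coincide mod higher powers, but conditioning controls this). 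Differentiating the defining relation, the derivative with respect to $w_i$ has valuation $(i-1)a$, so Hensel's lemma in the form ``a congruence $f(w)\equiv 0 \pmod{\varpi^{N}}$ with $v_\varpi(f'(w))=t$ has at most $\varpi^{t}$ solutions mod $\varpi^{N}$ once $N>2t$'' gives at most $\varpi^{(i-1)a}$ choices for $z_i$ modulo $\varpi^{ib}$ — wait, one must be careful that the relevant modulus for the $i$-th step is $\varpi^{ib}$ coming from the $i$-th power sum. Summing the exponents: $z_1$ ranges over $\varpi^{(k b) - b}$ values adjusted by the class condition (at most $\varpi^{(k-1)b}\cdot$something), and more efficiently, tracking only the \emph{excess} over the free count, one arranges the product of the per-step bounds $\varpi^{(i-1)a}$ for $i=1,\dots,k$ to be $\varpi^{\frac12 k(k-1)a}$, and combining with the $\varpi^{\frac12 k(k-1)b}$ from the lifting described above yields the claimed $k!\,\varpi^{\frac12 k(k-1)(a+b)}$, the $k!$ absorbing the ordering of the $k$ variables.

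The main obstacle is making the Hensel/Vandermonde step uniform and honest: one must verify that at the $i$-th stage the polynomial controlling $z_i$ really does have derivative of valuation exactly (or at most) $(i-1)a$ — this is where the well-conditioning built into $\Xi_a(\xi)$ in \eqref{2.14} is indispensable, and where a naive count (ignoring conditioning) would be off by powers of $\varpi$ that the iteration cannot afford. A convenient way to package this is to change variables $z_i = \xi_i + \varpi^{a+1} t_i$ so that the $t_i$ are unconstrained mod the residual modulus, expand the power sums by the Binomial Theorem, and recognise the resulting linear-algebra problem over $\dbZ/\varpi^{c}\dbZ$ as governed by a Vandermonde-type determinant in the $\xi_i$, whose valuation is exactly $\sum_{i<l} v_\varpi(\xi_i-\xi_l)$; the conditioning hypothesis forces each such valuation to equal $a$, giving the clean exponent $\frac12 k(k-1)a$. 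The second term $\frac12 k(k-1)b$ then arises purely from the differing moduli $\varpi^{jb}$ in \eqref{5.1}, and is elementary once the structure is in place.
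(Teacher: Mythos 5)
A preliminary remark: the paper does not actually prove this lemma — it is quoted as a special case of \cite[Lemma 4.1]{Woo2012} — so what you are reconstructing is the proof of that cited result. Your overall strategy is indeed its substance, and your exponent accounting is right: by the well-conditioning each difference $z_i-z_l$ has $\varpi$-adic valuation exactly $a$ (it is congruent to $\xi_i-\xi_l$ modulo $\varpi^{a+1}$, with $\varpi^a\|(\xi_i-\xi_l)$), so the Vandermonde/Jacobian has valuation exactly $\tfrac12k(k-1)a$; the staggered moduli $\varpi^{jb}$ in \eqref{5.1} contribute $\varpi^{\frac12k(k-1)b}$; and $k!$ accounts for the symmetry in the $z_i$.

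However, the middle of your argument has a genuine gap. The variable-at-a-time peeling is not well posed: the $j$-th congruence in \eqref{5.1} is symmetric in all $k$ variables, so after fixing $z_1,\dots,z_{i-1}$ it is not a polynomial condition on $z_i$ alone — the unknowns $z_{i+1},\dots,z_k$ still occur — and the one-variable Hensel statement you invoke does not apply to the simultaneous system. The correct mechanism is a lifting induction for the whole system: pass from solutions modulo $\varpi^c$ to $\varpi^{c+1}$, linearise the still-active congruences in the increments, and control the number of lifts by the elementary-divisor structure of the matrix $\bigl(j(z_i-\eta)^{j-1}\bigr)$, whose determinant is $k!$ times the Vandermonde and hence has valuation $\tfrac12k(k-1)a$; note that for $a\ge 1$ this Jacobian is \emph{not} a unit, so a naive multivariate Hensel step also fails, and one must either do this bookkeeping across the staggered moduli or first substitute $z_i=\xi+\varpi^a y_i$ to reduce to variables distinct modulo $\varpi$. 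Your closing repackaging via $z_i=\xi_i+\varpi^{a+1}t_i$ and a ``Vandermonde-governed linear-algebra problem'' gestures at exactly this, but the system is nonlinear and the passage from ``Jacobian valuation $\tfrac12k(k-1)a$'' to the bound $k!\,\varpi^{\frac12k(k-1)(a+b)}$ is precisely the content that needs proving; likewise, arguing through symmetric functions loses precision, since products of quantities known only to different moduli are not determined modulo $\varpi^{jb}$. In short: right ideas and right exponents, but the key lifting step is asserted rather than executed — either carry out the staggered-moduli induction in detail or simply quote \cite[Lemma 4.1]{Woo2012}, as the paper does.
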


\begin{proof} This is a special case of \cite[Lemma 4.1]{Woo2012}.
\end{proof}

\begin{lemma}\label{lemma5.2} Suppose that $a$ and $b$ are integers with $0\le 
a<b\le \tet^{-1}$. Then
$$K_{a,b}(X)\ll M^{\frac{1}{2}k(k-1)(b+a)}\left( I_{b,kb}(X)\right)^{k/s}
(X/M^b)^{(1-k/s)(\lam+\del)}.$$
\end{lemma}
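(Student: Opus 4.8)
The plan is to prove Lemma \ref{lemma5.2} by exploiting the congruence constraint \eqref{2.w5} hidden inside $K_{a,b}(X;\xi,\eta)$, turning it into an extra averaging over residue classes modulo $\varpi^{kb}$, and then applying Hölder's inequality to pass to the mean value $I_{b,kb}(X)$. First I would unpack $K_{a,b}(X;\xi,\eta)$ via orthogonality as a weighted count of solutions to \eqref{2.w3} subject to the stated conditions, and recall from the discussion surrounding \eqref{2.w5} that the $\bfx,\bfy$ variables satisfy
$$\sum_{i=1}^k(x_i-\eta)^j\equiv \sum_{i=1}^k(y_i-\eta)^j\mmod{\varpi^{jb}}\quad (1\le j\le k).$$
This means that if I sort the $\bfx$ variables according to the residue $\bfm=(m_1,\ldots,m_k)$ determined by $m_j\equiv\sum_i(x_i-\eta)^j\mmod{\varpi^{jb}}$, then the $\bfy$ variables must lie in the same residue class $\bfm$. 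The count of admissible $\bfx$ (equivalently $\bfy$) patterns modulo $\varpi^{kb}$ for each fixed $\bfm$ is exactly $\mathrm{card}(\calB_{a,b}(\bfm;\xi,\eta))$, which by Lemma \ref{lemma5.1} is at most $k!\varpi^{\frac12 k(k-1)(a+b)}$.

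Next I would introduce, for each residue class $\nu$ modulo $\varpi^{kb}$ that is compatible with $\Xi_a(\xi)$, the normalised exponential sum $\grf_{kb}(\bfalp;\nu)$ and rewrite $\grF_a(\bfalp;\xi)$, via \eqref{2.14}, as a weighted sum over such sums $\grf_{kb}(\bfalp;\nu)$; the point is that the diophantine constraint forces the two factors $\grF_a(\bfalp;\xi)$ appearing in $K_{a,b}$ to carry matching $\bfm$-data. After expanding, $K_{a,b}(X;\xi,\eta)$ is bounded by a sum over $\bfm$ of a product of: the cardinality bound for $\calB_{a,b}(\bfm;\xi,\eta)$, and a mean value in which the $\bfx,\bfy$ have been upgraded to run over full residue classes modulo $\varpi^{kb}$ — i.e.\ a quantity of shape $\oint |\grf_{kb}(\bfalp;\nu)^{2k}\grF_b(\bfalp;\eta)^{2u}|\d\bfalp$, suitably averaged. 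The factor $\varpi^{\frac12 k(k-1)(a+b)}\asymp M^{\frac12 k(k-1)(a+b)}$ is precisely the source of the leading power of $M$ in the statement.

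Then I would apply Hölder's inequality to the $2k+2s$-fold mean value: with exponents $s/k$ and $s/(s-k)$ I split
$$\oint |\grf_{kb}(\bfalp;\nu)^{2k}\grF_b(\bfalp;\eta)^{2u}|\d\bfalp \le \left(\oint |\grf_{kb}(\bfalp;\nu)^{2k}\grf_{kb}(-\bfalp;\nu)^{\,?}\cdots\right)^{k/s}(\cdots)^{1-k/s},$$
arranging matters so that the first factor reassembles, after summing over $\nu$ and $\eta$ with the weights $\rho_{kb}(\nu)^2$, $\rho_b(\eta)^2$, into $I_{b,kb}(X)$ — here one uses that $I_{b,kb}(X)$ is built from $\grF_b(\bfalp;\eta)^2\grf_{kb}(\bfalp;\nu)^{2s}$ (with $b$ playing the role of the outer parameter "$a$" in \eqref{2.15}, \eqref{2.17}) — while the second factor, being a mean value of $\grf_{kb}$ alone of degree $2s+2k$, is controlled by Lemma \ref{lemma3.1}, contributing $(X/M^{kb})^{(\lam+\del)}$ raised to the power $1-k/s$, i.e.\ the factor $(X/M^b)^{(1-k/s)(\lam+\del)}$ after recalling $M^{kb}$ versus $M^b$ bookkeeping. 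Finally I would reassemble $K_{a,b}(X)$ from $K_{a,b}(X;\xi,\eta)$ via \eqref{2.18}, using $\sum_\xi\rho_a(\xi)^2=\rho_0(1)^2=\sum_\eta\rho_b(\eta)^2$ to absorb the $\rho_0(1)^{-4}$ normalisation, and collect the powers of $M$ and $X$.

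The main obstacle I expect is the bookkeeping in the Hölder split: one must arrange the combinatorial identification of variables so that exactly $2s$ of the $\grf_{kb}$-factors and the $2u$ $\grF_b$-factors land in the $I_{b,kb}(X)$ piece while the remaining $\grf_{kb}$-factors (raised to total exponent $2s+2k$) land in the Lemma \ref{lemma3.1} piece, and simultaneously track how the weights $\rho_a(\xi)$, $\rho_b(\eta)$, $\rho_{kb}(\nu)$ recombine — in particular verifying that the "diagonal" in the $\bfx=\bfy$ residue data does not cost more than the $M^{\frac12 k(k-1)(a+b)}$ already extracted from Lemma \ref{lemma5.1}. The reindexing $n=\varpi^{kb}y+\nu$ and the accounting of which mean value has its outer index at level $b$ versus $kb$ must be done with care, but none of it is deep: it parallels the efficient congruencing step in \cite{Woo2012}, with the weights $\gra_n$ handled exactly as in the proofs of Lemmata \ref{lemma3.3} and \ref{lemma4.1} (positivity of weights lets one drop auxiliary conditions freely).
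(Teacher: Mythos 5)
Your first two steps are exactly the paper's: sort the $\bfx,\bfy$ blocks by the residue data $\bfm$ coming from (\ref{2.w5}), apply Cauchy's inequality to the resulting sum over $\calB_{a,b}(\bfm;\xi,\eta)$, and invoke Lemma \ref{lemma5.1} to extract the factor $M^{\frac12k(k-1)(a+b)}$; the weight bookkeeping you describe (total $\rho$-mass equal to $1$, positivity allowing conditions to be dropped) is also as in the paper. The gap is in your H\"older split. You propose
$$\oint |\grf_{kb}(\bfalp;\zet)^{2k}\grF_b(\bfalp;\eta)^{2u}|\d\bfalp \le \Bigl(\oint |\grF_b(\bfalp;\eta)^2\grf_{kb}(\bfalp;\zet)^{2s}|\d\bfalp\Bigr)^{k/s}\Bigl(\oint |\grf_{kb}(\bfalp;\zet)|^{2s+2k}\d\bfalp\Bigr)^{1-k/s},$$
but no such interpolation exists: matching powers of $\grF_b$ forces the first exponent to be $u$ (since $2\alpha=2u$), not $k/s$, and with exponent $k/s$ the powers of $\grf_{kb}$ do not balance either ($2k$ from the first factor plus $(2s+2k)(1-k/s)$ from the second exceeds $2k$ unless $s=k$). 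Once you insist that the $I_{b,kb}$-shaped factor carries only $\grF_b^2$ per copy, the leftover $\grF_b^{2u-2k/s}$ must go into the complementary factor, which is then necessarily $\bigl(\oint|\grF_b(\bfalp;\eta)|^{2u+2}\d\bfalp\bigr)^{1-k/s}$ --- a moment of the conditioned sum $\grF_b$, not of $\grf_{kb}$. This is precisely why the paper proves Lemma \ref{lemma3.1a}: it gives $\oint|\grF_b(\bfalp;\eta)|^{2u+2}\d\bfalp\ll(X/M^b)^{\lam+\del}$, which is the true source of the factor $(X/M^b)^{(1-k/s)(\lam+\del)}$ in the statement. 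Your alternative accounting, in which a bound $(X/M^{kb})^{\lam+\del}$ from Lemma \ref{lemma3.1} becomes $(X/M^b)^{(1-k/s)(\lam+\del)}$ ``after $M^{kb}$ versus $M^b$ bookkeeping,'' is not a legitimate step: those quantities differ by a genuine power of $M$, and in any case the inequality it is meant to feed does not hold.

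A secondary point you pass over with ``suitably averaged'': after Cauchy and Lemma \ref{lemma5.1} the integrand still contains a product $\prod_{i=1}^k|\grf_{kb}(\bfalp;\zet_i)|^2$ over a $k$-tuple of distinct classes modulo $\varpi^{kb}$, whereas $I_{b,kb}(X;\eta,\zet)$ involves a single class raised to the power $2s$. One needs an intermediate H\"older inequality in the $\zet$-summation, namely
$$\Bigl(\sum_{\zet}\frac{\rho_{kb}(\zet)^2}{\rho_a(\xi)^2}|\grf_{kb}(\bfalp;\zet)|^2\Bigr)^k\le\sum_{\zet}\frac{\rho_{kb}(\zet)^2}{\rho_a(\xi)^2}|\grf_{kb}(\bfalp;\zet)|^{2k},$$
using that the weights sum to $1$, before the main split can be applied classwise. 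With that step and the corrected split via Lemma \ref{lemma3.1a}, the reassembly over $\xi,\eta,\zet$ that you outline does go through as in the paper.
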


\begin{proof} Let $\xi$ and $\eta$ be fixed integers with $1\le \xi\le \varpi^a$ and 
$1\le \eta\le \varpi^b$. We recall that $K_{a,b}(X;\xi,\eta)$ counts the number of integral 
solutions $\bfx,\bfy,\bfv,\bfw$ of the system (\ref{2.w3}), subject to its attendant 
conditions, with each solution counted with weight (\ref{2.w4}). Moreover, these solutions 
are subject to the congruence conditions (\ref{2.w5}). A comparison between the latter 
and (\ref{5.1}) shows that for each solution $\bfx,\bfy$ of the system of congruences 
(\ref{2.w5}), there is a $k$-tuple $\bfm$ for which one has both 
$\bfx\in \calB_{a,b}(\bfm;\xi,\eta)$ and $\bfy\in \calB_{a,b}(\bfm;\xi,\eta)$.\par

Write
$$\grG_{a,b}(\bfalp;\xi,\eta;\bfm)=\sum_{\bfzet\in \calB_{a,b}(\bfm;\xi,\eta)}
\prod_{i=1}^k\frac{\rho_{kb}(\zet_i)}{\rho_a(\xi)}\grf_{kb}(\bfalp;\zet_i).$$
Then it follows by orthogonality that
\begin{equation}\label{5.2}
K_{a,b}(X;\xi,\eta)=\sum_{m_1=1}^{\varpi^b}\ldots \sum_{m_k=1}^{\varpi^{kb}}
\oint |\grG_{a,b}(\bfalp;\xi,\eta;\bfm)^2\grF_b(\bfalp ;\eta)^{2u}|\d\bfalp .
\end{equation}
An application of Cauchy's inequality in combination with Lemma \ref{lemma5.1} yields the 
bound
\begin{align}
|\grG_{a,b}(\bfalp;\xi,\eta;\bfm)|^2&\le \text{card}(\calB_{a,b}(\bfm;\xi,\eta))
\sum_{\bfzet\in \calB_{a,b}(\bfm;\xi,\eta)}\prod_{i=1}^k\frac{\rho_{kb}(\zet_i)^2}
{\rho_a(\xi)^2}|\grf_{kb}(\bfalp;\zet_i)|^2\notag \\
&\ll M^{\frac{1}{2}k(k-1)(a+b)}\sum_{\bfzet\in 
\calB_{a,b}(\bfm;\xi,\eta)}\prod_{i=1}^k\frac{\rho_{kb}(\zet_i)^2}
{\rho_a(\xi)^2}|\grf_{kb}(\bfalp;\zet_i)|^2.\label{5.3}
\end{align}
On substituting (\ref{5.3}) into (\ref{5.2}), and again applying orthogonality, we deduce 
that
\begin{equation}\label{5.4}
K_{a,b}(X;\xi,\eta)\ll M^{\frac{1}{2}k(k-1)(a+b)}
\sum_{\substack{1\le \bfzet\le \varpi^{kb}\\ \bfzet\equiv \xi\mmod{\varpi^a}}}
\oint |\grF_b(\bfalp;\eta)|^{2u}
\prod_{i=1}^k\frac{\rho_{kb}(\zet_i)^2}{\rho_a(\xi)^2}|\grf_{kb}(\bfalp;\zet_i)|^2.
\end{equation}
Here, we have again made use of the presumed positivity of the coefficients $\gra_n$ in 
order to drop the implicit conditioning of the final block of $2k$ variables.\par

Next we observe that an application of H\"older's inequality reveals that
\begin{align*}
\sum_{\substack{1\le \bfzet\le \varpi^{kb}\\ \bfzet\equiv \xi\mmod{\varpi^a}}}
\prod_{i=1}^k\frac{\rho_{kb}(\zet_i)^2}{\rho_a(\xi)^2}|\grf_{kb}(\bfalp;\zet_i)|^2&
=\Biggl( \sum_{\substack{1\le \zet\le \varpi^{kb}\\ \zet\equiv \xi\mmod{\varpi^a}}}
\frac{\rho_{kb}(\zet)^2}{\rho_a(\xi)^2}|\grf_{kb}(\bfalp;\zet)|^2\Biggr)^k\\
&\le \rho^{k-1}\sum_{\substack{1\le \zet\le \varpi^{kb}\\ \zet\equiv 
\xi\mmod{\varpi^a}}}\frac{\rho_{kb}(\zet)^2}{\rho_a(\xi)^2}
|\grf_{kb}(\bfalp;\zet)|^{2k},
\end{align*}
where
$$\rho=\sum_{\substack{1\le \zet\le \varpi^{kb}\\ \zet\equiv \xi\mmod{\varpi^a}}}
\frac{\rho_{kb}(\zet)^2}{\rho_a(\xi)^2}=\rho_a(\xi)^{-2}\sum_{\substack{|n|\le X\\
n\equiv \xi\mmod{\varpi^a}}}|\gra_n|^2=1.$$
Thus we deduce from (\ref{5.4}) that
\begin{equation}\label{5.5}
K_{a,b}(X;\xi,\eta)\ll M^{\frac{1}{2}k(k-1)(a+b)}
\sum_{\substack{1\le \zet\le \varpi^{kb}\\ \zet\equiv \xi\mmod{\varpi^a}}}
\frac{\rho_{kb}(\zet)^2}{\rho_a(\xi)^2}V(\zet,\eta),
\end{equation}
in which we write
$$V(\zet,\eta)=\oint |\grf_{kb}(\bfalp;\zet)^{2k}\grF_b(\bfalp;\eta)^{2u}|\d\bfalp.$$

\par On recalling that $s=uk$, an application of H\"older's inequality delivers the bound
\begin{equation}\label{5.6}
V(\zet,\eta)\le V_1^{1-k/s}V_2^{k/s},
\end{equation}
where
$$V_1=\oint |\grF_b(\bfalp;\eta)|^{2u+2}\d\bfalp $$
and
$$V_2=\oint |\grF_b(\bfalp;\eta)^2\grf_{kb}(\bfalp;\zet)^{2s}|\d\bfalp .$$
By Lemma \ref{lemma3.1a}, one has $V_1\ll (X/M^b)^{\lam+\del}$. On the other hand, we 
find from (\ref{2.15}) that $V_2=I_{b,kb}(X;\eta,\zet)$. We therefore deduce from 
(\ref{5.5}) and (\ref{5.6}) via another application of H\"older's inequality that
$$K_{a,b}(X;\xi,\eta)\ll \rho_a(\xi)^{-2}M^{\frac{1}{2}k(k-1)(a+b)}
W_1^{1-k/s}W_2^{k/s},$$
where
$$W_1=\sum_{\substack{1\le \zet \le \varpi^{kb}\\ \zet\equiv \xi\mmod{\varpi^a}}}
\rho_{kb}(\zet)^2(X/M^b)^{\lam+\del}=\rho_a(\xi)^2(X/M^b)^{\lam+\del}$$
and
$$W_2=\sum_{\substack{1\le \zet\le \varpi^{kb}\\ \zet\equiv \xi\mmod{\varpi^a}}}
\rho_{kb}(\zet)^2I_{b,kb}(X;\eta,\zet).$$

\par Yet another application of H\"older's inequality yields
\begin{equation}\label{5.7}
\sum_{\xi=1}^{\varpi^a}\sum_{\eta=1}^{\varpi^b}\rho_a(\xi)^2\rho_b(\eta)^2
K_{a,b}(X;\xi,\eta)\ll M^{\frac{1}{2}k(k-1)(a+b)}Z_1^{1-k/s}Z_2^{k/s},
\end{equation}
where
$$Z_1=\sum_{\xi=1}^{\varpi^a}\sum_{\eta=1}^{\varpi^b}\rho_a(\xi)^2\rho_b(\eta)^2
(X/M^b)^{\lam+\del}=\rho_0(1)^4(X/M^b)^{\lam+\del},$$
and, in view of (\ref{2.18}),
\begin{align*}
Z_2&=\sum_{\eta=1}^{\varpi^b}\rho_b(\eta)^2\sum_{\xi=1}^{\varpi^a}
\sum_{\substack{1\le \zet\le \varpi^{kb}\\ \zet\equiv \xi\mmod{\varpi^a}}}
\rho_{kb}(\zet)^2I_{b,kb}(X;\eta,\zet)\\
&=\sum_{\eta=1}^{\varpi^b}\sum_{\zet=1}^{\varpi^{kb}}\rho_b(\eta)^2
\rho_{kb}(\zet)^2I_{b,kb}(X;\eta,\zet)=\rho_0(1)^4I_{b,kb}(X).
\end{align*}
An additional reference to (\ref{2.18}) therefore conveys us from (\ref{5.7}) to the bound
$$K_{a,b}(X)\ll \rho_0(1)^{-4}M^{\frac{1}{2}k(k-1)(b+a)}\left( \rho_0(1)^4I_{b,kb}(X)
\right)^{k/s}\left( \rho_0(1)^4(X/M^b)^{\lam+\del}\right)^{1-k/s},$$
and the proof of the lemma is complete.
\end{proof}

Finally, by combining the conclusion of Lemma \ref{lemma4.2} with Lemma 
\ref{lemma5.2}, we obtain the key bound for our iterative process.

\begin{lemma}\label{lemma5.3} Suppose that $a$ and $b$ are integers with 
$0\le a<b\le (12k\tet)^{-1}$. Put $H=4(k-1)b$. Then there exists an integer $h$, 
with $0\le h<H$, having the property that
$$\llbracket K_{a,b}(X)\rrbracket \ll X^\del M^{-kh}\llbracket K_{b,kb+h}(X)
\rrbracket^{k/s}(X/M^b)^{\Lam (1-k/s)}+M^{-kH/6}(X/M^b)^\Lam.$$
\end{lemma}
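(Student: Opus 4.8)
The plan is to combine the two key structural results already at hand: Lemma \ref{lemma5.2}, which bounds $K_{a,b}(X)$ in terms of the conditioned mean value $I_{b,kb}(X)$, and Lemma \ref{lemma4.2} (with the roles of $a$ and $b$ shifted up), which in turn bounds $I_{b,kb}(X)$ in terms of a conditioned mean value $K_{b,kb+h}(X)$ plus an acceptable error term. First I would apply Lemma \ref{lemma5.2} with the given $a$ and $b$, which is legitimate since $b\le (12k\tet)^{-1}\le \tet^{-1}$, to obtain
$$K_{a,b}(X)\ll M^{\frac12 k(k-1)(b+a)}\bigl( I_{b,kb}(X)\bigr)^{k/s}(X/M^b)^{(1-k/s)(\lam+\del)}.$$
Next I would apply Lemma \ref{lemma4.2} with $a$ replaced by $b$ and $b$ replaced by $kb$, so that $H=4(k-1)b$ as in the statement; the hypothesis $kb+H=kb+4(k-1)b\le (3\tet)^{-1}$ needs checking, but this follows comfortably from $b\le (12k\tet)^{-1}$ since $kb+4(k-1)b\le 5kb\le (12\tet)^{-1}\cdot\tfrac5{12}<(3\tet)^{-1}$. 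That lemma yields an integer $h$ with $0\le h<H$ for which
$$I_{b,kb}(X)\ll K_{b,kb+h}(X)+M^{-sH/4}(X/M^{kb})^s(X/M^b)^{k-k(k+1)/2+\Lam}.$$

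Substituting the second bound into the first, and using the elementary inequality $(P+Q)^{k/s}\ll P^{k/s}+Q^{k/s}$, splits the estimate for $K_{a,b}(X)$ into a "main" term involving $K_{b,kb+h}(X)^{k/s}$ and an "error" term coming from the $M^{-sH/4}$ contribution. For the main term I would pass to normalised quantities via the definitions \eqref{2.21} of $\llbracket K_{a,b}(X)\rrbracket$ and \eqref{2.23} relating $\lam$ and $\Lam$: one writes $\lam+\del = (s+k)+\Lam-k(k+1)/2+\del$ wait --- more precisely $\lam=(s+k)+\Lam-k(k+1)/2$, so $(X/M^b)^{(1-k/s)(\lam+\del)}$ together with the normalising denominators of $\llbracket K_{a,b}\rrbracket$ and $\llbracket K_{b,kb+h}\rrbracket$ should be rearranged so that the surviving powers of $X$ collapse to $X^\del$ (absorbing the $\del$ from $\lam+\del$), the surviving powers of $M^b$ collapse to the claimed factor $(X/M^b)^{\Lam(1-k/s)}$, and the explicit prime-power prefactors $M^{\frac12 k(k-1)(b+a)}$ and $M^{-k(kb+h)+\text{(denominator exponents)}}$ combine to give $M^{-kh}$ up to acceptable slack. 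This bookkeeping is the step most prone to arithmetic slips, but it is entirely routine exponent-chasing of exactly the kind carried out in Lemmata \ref{lemma3.3} and \ref{lemma5.2}.

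For the error term, I would feed the explicit expression $M^{-sH/4}(X/M^{kb})^s(X/M^b)^{k-k(k+1)/2+\Lam}$, raised to the power $k/s$ and multiplied by the prefactors from Lemma \ref{lemma5.2}, through the same normalisation. The powers of $X$ and $M^b$ should again reorganise into $(X/M^b)^\Lam$ times a negative power of $M$; the dominant negative contribution is $M^{-sH/4\cdot k/s}=M^{-kH/4}$, and after the various positive prefactors of the shape $M^{O(kb)}=M^{O(H)}$ are absorbed one is left with something bounded by $M^{-kH/6}(X/M^b)^\Lam$, using $H=4(k-1)b$ and $b\le (12k\tet)^{-1}$ to guarantee that $kH/4$ beats all the stray positive exponents with room to spare. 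Here one also needs $\Lam\ge 0$ (the regime of interest, as in Lemma \ref{lemma3.3}) so that replacing $k-k(k+1)/2+\Lam$ by its value does not cost us; in the complementary case $\Lam<0$ the desired bound $\llbracket K_{a,b}(X)\rrbracket\le 0$-type conclusion will already have been forced elsewhere and the lemma is vacuous.

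The main obstacle, as usual in efficient congruencing, is not any single inequality but the \emph{consistency of the exponent bookkeeping}: one must verify that after the two substitutions the exponent of $X$ is exactly $\del$ (no hidden positive power of $X$ survives, which is where the hypotheses $b\le (12k\tet)^{-1}$ and the smallness of $\tet,\del$ from \eqref{2.7} are really used), that the coefficient of $\Lam$ comes out as $1-k/s$ in the main term and $1$ in the error term, and that the net power of $M$ is $-kh$ in the main term and at most $-kH/6$ in the error term after absorbing every positive $M^{O(b)}$ prefactor. I would organise this by first writing both sides purely in terms of $X$, $M$, $\Lam$, $\del$, $h$, $H$, $a$, $b$ and then checking the exponents of $X$, of $M^a$, of $M^b$ and of $M^h$ separately --- exactly the strategy used to prove Lemma \ref{lemma5.2} above. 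Once that arithmetic is pinned down, the lemma follows immediately.
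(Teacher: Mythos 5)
Your proposal is correct and is essentially the paper's own argument: apply Lemma \ref{lemma5.2}, then Lemma \ref{lemma4.2} with $(a,b)$ replaced by $(b,kb)$ (so that $H=4(k-1)b$), substitute, and renormalise via (\ref{2.20}), (\ref{2.21}) and (\ref{2.23}); the bookkeeping you defer does close, the decisive point being that the prefactor $M^{\frac{1}{2}k(k-1)(a+b)}$ cancels \emph{exactly} against the normalising denominators (the paper's identity $\ome=0$), so the error term emerges as $X^\del M^{-kH/4}(X/M^b)^\Lam$ and only the $X^\del$ needs absorbing, via $M\ge X^{2\del}$, to reach $M^{-kH/6}$. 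Two cautions: the mechanism is this exact cancellation rather than ``$kH/4$ beating stray $M^{O(kb)}$ prefactors'' (it would not, since $\tfrac{1}{2}k(k-1)(a+b)$ lies between $kH/8$ and $kH/4$, leaving no room to reach $-kH/6$ by brute force); and your numerical check of the hypothesis of Lemma \ref{lemma4.2} miscomputes, since $5kb\le\tfrac{5}{12}\tet^{-1}$, which is not below $(3\tet)^{-1}$ --- indeed $(5k-4)b\le(3\tet)^{-1}$ follows from $b\le(12k\tet)^{-1}$ only for $k\le 4$, a point the paper itself glosses over and which is harmless because in the eventual iteration $b$ is far smaller than $(12k\tet)^{-1}$.
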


\begin{proof} Recall the definitions (\ref{2.20}) and (\ref{2.21}). Then it follows from 
Lemma \ref{lemma5.2} that
\begin{equation}\label{5.8}
\llbracket K_{a,b}(X)\rrbracket \ll X^\del M^\ome\llbracket I_{b,kb}(X)\rrbracket^{k/s}
(X/M^b)^{\Lam (1-k/s)},
\end{equation}
in which we have written
$$\ome=\tfrac{1}{2}k(k-1)(b+a)+(k-\tfrac{1}{2}k(k+1))(a-b)-k(k-1)b.$$
Since $\ome =0$, we may proceed to apply Lemma \ref{lemma4.2}. Thus, we deduce that 
there exists an integer $h$ with $0\le h<H$ having the property that
$$I_{b,kb}(X)\ll K_{b,kb+h}(X)+M^{-sH/4}(X/M^{kb})^s(X/M^b)^{k-k(k+1)/2+\Lam}.$$
Again referring to (\ref{2.20}) and (\ref{2.21}), we see that
$$\llbracket I_{b,kb}(X)\rrbracket \ll M^{-sh}\llbracket K_{b,kb+h}(X)\rrbracket
+(X/M^b)^\Lam M^{-sH/4}.$$
Substituting this estimate into (\ref{5.8}), we obtain the bound
$$\llbracket K_{a,b}(X)\rrbracket \ll X^\del M^{-kh}\llbracket K_{b,kb+h}(X)
\rrbracket^{k/s}(X/M^b)^{\Lam (1-k/s)}+X^\del M^{-kH/4}(X/M^b)^\Lam.$$
Since $M\ge X^{2\del}$, the conclusion of the lemma now follows.
\end{proof}

\section{The iterative process} The skeleton of our iterative process is now visible. Lemma 
\ref{lemma4.4} bounds $U(X)$ in terms of $K_{0,1+h}(X)$, whilst Lemma \ref{lemma5.3} 
bounds $K_{a,b}(X)$ in terms of $K_{b,kb+h}(X)$. Thus we may obtain a sequence of 
bounds for $U(X)$ in terms of auxiliary mean values $K_{a_n,b_n}(X)$, with $a_n$ and 
$b_n$ increasing with $n$. At any point in this iteration, we may apply the trivial bound 
for $K_{a,b}(X)$ supplied by Lemma \ref{lemma3.3}. It transpires that, with an 
appropriate choice of parameters (already selected within our argument), we arrive at a 
contradiction whenever $\Lam>0$. We begin by distilling Lemma \ref{lemma5.3} into a 
more portable form.

\begin{lemma}\label{lemma6.1} Suppose that $\Lam\ge 0$. Let $a,b\in \dbZ$ satisfy 
$0\le a<b\le (12k\tet)^{-1}$. In addition, suppose that there are real numbers $\psi$, 
$c$ and $\gam$, with
$$0\le c\le (2\del)^{-1}\tet,\quad \gam\ge 0\quad \text{and}\quad \psi\ge 0,$$
such that
\begin{equation}\label{6.1}
X^\Lam M^{\Lam \psi}\ll X^{c\del}M^{-\gam}\llbracket K_{a,b}(X)\rrbracket .
\end{equation}
Then, for some integer $h$ with $0\le h\le 4kb$, one has
$$X^\Lam M^{\Lam \psi'}\ll X^{c'\del}M^{-\gam'}\llbracket K_{b,kb+h}(X)\rrbracket ,$$
where
$$\psi'=(s/k)\psi+(s/k-1)b,\quad c'=(s/k)(c+1),\quad \gam'=(s/k)\gam+sh.$$
\end{lemma}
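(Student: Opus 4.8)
The plan is to feed the hypothesis (\ref{6.1}) into Lemma \ref{lemma5.3} and then rearrange the resulting inequality into the stated form, tracking how the three bookkeeping parameters $\psi$, $c$, $\gam$ transform. First I would check that the hypotheses of Lemma \ref{lemma5.3} are met: the constraint $0\le a<b\le (12k\tet)^{-1}$ is assumed verbatim, so we may invoke that lemma to produce an integer $h$ with $0\le h<H=4(k-1)b\le 4kb$ and
$$\llbracket K_{a,b}(X)\rrbracket \ll X^\del M^{-kh}\llbracket K_{b,kb+h}(X)\rrbracket^{k/s}(X/M^b)^{\Lam(1-k/s)}+M^{-kH/6}(X/M^b)^\Lam.$$
Substituting this into (\ref{6.1}) gives $X^\Lam M^{\Lam\psi}\ll X^{c\del}M^{-\gam}$ times the sum of these two terms, and the argument then splits into two cases according to which term dominates.

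In the main case, where the first term dominates, we obtain
$$X^\Lam M^{\Lam\psi}\ll X^{(c+1)\del}M^{-\gam-kh}\llbracket K_{b,kb+h}(X)\rrbracket^{k/s}(X/M^b)^{\Lam(1-k/s)}.$$
Here I would isolate $\llbracket K_{b,kb+h}(X)\rrbracket^{k/s}$: moving the factor $(X/M^b)^{\Lam(1-k/s)}$ to the left and using $M\ge X^{2\del}$ (so that $X^{c'\del}$-type slack is never harmed by converting small powers of $X$ into powers of $M$ or vice versa), one reaches an inequality of the shape $X^{\Lam k/s}M^{\Lam(\psi+(1-k/s)b)}\ll X^{(c+1)\del}M^{-\gam-kh}\llbracket K_{b,kb+h}(X)\rrbracket^{k/s}$. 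Raising both sides to the power $s/k$ and simplifying the exponents yields exactly $X^\Lam M^{\Lam\psi'}\ll X^{c'\del}M^{-\gam'}\llbracket K_{b,kb+h}(X)\rrbracket$ with $\psi'=(s/k)\psi+(s/k-1)b$, $c'=(s/k)(c+1)$, and $\gam'=(s/k)\gam+sh$; here one uses $\Lam\ge 0$ together with $(X/M^b)^{\Lam(1-k/s)}\ge M^{\Lam(1-k/s)\cdot(\text{stuff})}$ carefully, and that $(s/k-1)b\ge 0$ and $sh\ge 0$ and $(s/k)(c+1)\ge 0$ keep $\psi',c',\gam'$ in their required ranges. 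The constraint $c'\le(2\del)^{-1}\tet$ is not part of the conclusion we must verify (the hypothesis on $c$ is what is needed to invoke the lemma, not an assertion about $c'$), so no further checking is required there.

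In the second case, where the term $M^{-kH/6}(X/M^b)^\Lam$ dominates, (\ref{6.1}) would give $X^\Lam M^{\Lam\psi}\ll X^{c\del}M^{-\gam-kH/6}(X/M^b)^\Lam$, i.e.\ $M^{\Lam\psi+kH/6}\ll X^{c\del}M^{-\gam-\Lam b}$; since $\psi,\gam\ge0$ and $\Lam\ge0$ while $kH/6$ is a genuinely positive power of $M=X^\tet$ that dwarfs $X^{c\del}$ (as $c\le(2\del)^{-1}\tet$ forces $X^{c\del}\le M^{1/2}$, whereas $kH/6\ge kb/6\ge k/6$), this is a contradiction for $X$ large. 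Hence the second case cannot occur, and the main case always applies. The main obstacle, such as it is, is purely the exponent bookkeeping: one must be scrupulous in converting between powers of $X$ and powers of $M$ using $M=X^\tet$ and $M\ge X^{2\del}$, and in checking that the new parameters genuinely satisfy $\psi'\ge0$, $\gam'\ge0$, which here is immediate from the sign conditions on the inputs.
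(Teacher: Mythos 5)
Your proposal is correct and follows essentially the same route as the paper: invoke Lemma \ref{lemma5.3}, substitute into (\ref{6.1}), dispose of the secondary term $M^{-kH/6}(X/M^b)^\Lam$ (you via a case split ending in a contradiction, the paper by absorbing it into the left-hand side using $\Lam\ge 0$, $\psi\ge 0$), then divide through by $(X/M^b)^{\Lam(1-k/s)}$ and raise to the power $s/k$, which is exact exponent bookkeeping. One tiny slip in your case-two estimate: the bound $kH/6\ge k/6$ is too weak to dominate $X^{c\del}\le M^{1/2}$ when $k\in\{2,3\}$; since $H=4(k-1)b$ one has $kH/6=\tfrac{2}{3}k(k-1)b\ge \tfrac{4}{3}$, which gives the contradiction you want.
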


\begin{proof} By hypothesis, we have $X^{c\del}\le M$. We therefore deduce from 
Lemma \ref{lemma5.3} that there exists an integer $h$ with $0\le h<4kb$ having the 
property that
$$\llbracket K_{a,b}(X)\rrbracket \ll X^\del M^{-kh}
\llbracket K_{b,kb+h}(X)\rrbracket^{k/s}(X/M^b)^{\Lam (1-k/s)}+M^{-kH/6}
(X/M^b)^\Lam .$$
The hypothesised bound (\ref{6.1}) consequently leads to the estimate
$$X^\Lam M^{\Lam \psi}\ll X^{(c+1)\del}M^{-\gam-kh}\llbracket K_{b,kb+h}(X)
\rrbracket^{k/s}(X/M^b)^{\Lam (1-k/s)}+M^{-kH/6}(X/M^b)^\Lam ,$$
whence
$$X^{\Lam k/s}M^{\Lam (\psi+(1-k/s)b)}\ll X^{(c+1)\del}M^{-\gam -kh}
\llbracket K_{b,kb+h}(X)\rrbracket^{k/s}.$$
The conclusion of the lemma follows on raising left and right hand sides of the last 
inequality to the power $s/k$.
\end{proof}

We now come to the main act.

\begin{lemma}\label{lemma6.2} One has $\Lam\le 0$.
\end{lemma}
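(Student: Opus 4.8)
The plan is to run the efficient congruencing iteration to its natural conclusion and derive a contradiction from the assumption $\Lam>0$. Suppose, for a contradiction, that $\Lam>0$. The starting point is Lemma~\ref{lemma4.4}, which supplies an integer $h_0\in\{0,1,2,3\}$ with $U(X)\ll M^sK_{0,1+h_0}(X)$; combining this with the lower bound $\llbracket U(X)\rrbracket>X^{\Lam-\del}$ from (\ref{2.22}) and the normalisations (\ref{2.19})--(\ref{2.21}), one converts this into an inequality of the shape (\ref{6.1}) with $a=0$, $b=b_1:=1+h_0$, and some admissible triple $(\psi_1,c_1,\gam_1)$ of nonnegative reals with $c_1$ small (of size $O(1)$) and $\psi_1,\gam_1$ likewise controlled. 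This is the base case of the iteration.

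Next I would iterate Lemma~\ref{lemma6.1}. Starting from the base inequality at $(a_1,b_1)=(0,b_1)$, the lemma produces an integer $h_1$ with $0\le h_1\le 4kb_1$ and a new inequality of the same shape at $(a_2,b_2):=(b_1,kb_1+h_1)$, with updated parameters $\psi_2=(s/k)\psi_1+(s/k-1)b_1$, $c_2=(s/k)(c_1+1)$, $\gam_2=(s/k)\gam_1+sh_1$. Repeating this $n$ times yields sequences $a_{j+1}=b_j$, $b_{j+1}=kb_j+h_j$ with $0\le h_j\le 4kb_j$, so that $b_j$ grows at most geometrically (roughly like $(k+4k)^j$), together with parameters $\psi_j,c_j,\gam_j$ satisfying the stated recursions. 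The point of having chosen $\tet$ and $\del$ so small in (\ref{2.7}) is precisely that one can run the iteration for $n$ steps—where $n$ is the fixed number of iterations chosen at the outset—while preserving the admissibility hypotheses $b_j\le(12k\tet)^{-1}$ and $c_j\le(2\del)^{-1}\tet$ needed to apply Lemma~\ref{lemma6.1} again; here one uses that $c_j$ grows only like $(s/k)^j$, which is dominated by $\tet^{-1}$ since $\tet=(Nk)^{-3N}$ and $n\le N$, and similarly $b_j$ is dominated by $(12k\tet)^{-1}$.

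After $n$ iterations one arrives at an inequality
$$X^\Lam M^{\Lam\psi_n}\ll X^{c_n\del}M^{-\gam_n}\llbracket K_{a_n,b_n}(X)\rrbracket,$$
and at this terminal stage I would invoke the trivial bound of Lemma~\ref{lemma3.3}, which gives $\llbracket K_{a_n,b_n}(X)\rrbracket\ll X^{\Lam+\del}(M^{b_n-a_n})^{k(k+1)/2}$. Substituting this in and recalling $M=X^\tet$ produces an inequality of the form
$$X^\Lam M^{\Lam\psi_n}\ll X^{(c_n+1)\del}M^{-\gam_n+k(k+1)(b_n-a_n)/2}X^\Lam,$$
i.e., after cancelling $X^\Lam$,
$$M^{\Lam\psi_n}\ll X^{(c_n+1)\del}M^{k(k+1)(b_n-a_n)/2-\gam_n}.$$
The decisive computation—and the main obstacle—is to show that with $s\ge k^2$ (equivalently $u\ge k$, so $s/k\ge k$) the quantity $\psi_n$, which satisfies $\psi_n=(s/k)\psi_{n-1}+(s/k-1)b_{n-1}$ and hence grows like $(s/k)^n b_1$, eventually dominates the exponent on the right: one needs $\Lam\psi_n$ to exceed $k(k+1)(b_n-a_n)/2$ by more than the error term $(c_n+1)\del/\tet$ can absorb. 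Because $\del/\tet$ is bounded (both are of the form $X^{-(\cdots)^{3N}}$ with the $\del$ exponent the larger), and because $\psi_n/(b_n-a_n)$ tends to infinity with $n$ precisely when $s/k\ge k$, choosing $n$ large enough (but still $\le N$) forces $\Lam\psi_n - k(k+1)(b_n-a_n)/2 - (c_n+1)\del/\tet>0$ whenever $\Lam>0$. Taking $X\to\infty$ along the sequence $X=X_m$ then makes the left side $M^{\Lam\psi_n}=X^{\tet\Lam\psi_n}$ grow strictly faster than the right, a contradiction. Hence $\Lam\le 0$, completing the proof. (The verification that the admissible inequality is genuinely propagated—that $\gam_j\ge 0$ throughout and that the $M^{-kH/6}(X/M^b)^\Lam$ tail term in Lemma~\ref{lemma5.3} is always negligible against the main term under the standing hypothesis $\Lam>0$—is the routine bookkeeping I would carry out carefully but not belabour here.)
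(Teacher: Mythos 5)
Your overall architecture coincides with the paper's: initialise with Lemma \ref{lemma4.4}, propagate an inequality of the shape (\ref{6.1}) by iterating Lemma \ref{lemma6.1} along $a_{n+1}=b_n$, $b_{n+1}=kb_n+h_n$, and terminate with the crude bound of Lemma \ref{lemma3.3}. The gap is in what you call the decisive computation. You argue that $\psi_n/(b_n-a_n)\to\infty$ when $s/k\ge k$, and you demote $\gam_n$ to the role of a non-negative quantity whose only required property is $\gam_n\ge 0$. Both points fail in the regime where the integers $h_n$ supplied by Lemma \ref{lemma6.1} are large: $h_n$ may be as big as $4kb_n$, in which case $b_n$ grows geometrically with ratio of order $k$ times a constant larger than $1$, while $\psi_n$ (governed by $\psi_{n+1}=(s/k)\psi_n+(s/k-1)b_n$, with $s/k=k$ in the critical case $s=k^2$) remains merely comparable to $b_n$; then $\psi_n/(b_n-b_{n-1})$ stays bounded, and for a small fixed $\Lam>0$ the inequality $\Lam\psi_n>\tfrac12 k(k+1)(b_n-a_n)+(c_n+1)\del/\tet$ need never hold, so the contradiction you aim for does not materialise.

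What rescues the argument---and is the real content of the paper's proof---is that every large $h_n$ is purchased at the price of a factor $M^{-sh_n}$ recorded in $\gam_n$ through $\gam_{n+1}=(s/k)\gam_n+sh_n$. After first reducing to $s=k^2$ via the trivial bound (\ref{2.X}) (a step you omit; it is what makes the recursions telescope exactly), one has the identity $\gam_n=s(b_n-k^n)$ of equation (\ref{6.12}); for general $s\ge k^2$ one could instead prove the inequality $\gam_n\ge s(b_n-k^n)$, but some such quantitative lower bound is indispensable. With it, the $M$-exponent $\tfrac12 k(k+1)(b_n-b_{n-1})-\gam_n$ arising after Lemma \ref{lemma3.3} is at most $sk^n$ uniformly in the choices of $h_0,\ldots,h_{n-1}$, because $s\ge\tfrac12 k(k+1)$. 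Comparing with the lower bound $\psi_n\ge n(k-1)k^{n-1}$ (which uses only $b_m\ge k^m$) and with $X^{(c_n+1)\del}<M$ then gives $\Lam\psi_n\le sk^n+1$, hence $\Lam\le 4s/n$, and letting the number of iterations grow yields $\Lam\le 0$. So your skeleton is the paper's, but the endgame requires carrying the exact size of $\gam_n$ (and the $s=k^2$ reduction or a surrogate for it) into the final comparison; this is not routine bookkeeping, and without it the step on which everything hinges is false as stated.
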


\begin{proof} Suppose first that we are able to establish the conclusion of the lemma for 
$s=k^2$. Then, applying the trivial estimate (\ref{2.X}), we find that whenever $s>k^2$, 
one has
\begin{align*}
U_{s+k}(X;\bfgra)&\le \Bigl( \sup_{\bfalp\in [0,1)^k}|\ftil_\bfgra (\bfalp;X)|
\Bigr)^{2(s-k^2)}\oint |\ftil_\bfgra (\bfalp;X)|^{2k(k+1)}\d\bfalp \\
&\ll X^{s-k^2}U_{k(k+1)}(X;\bfgra)\ll X^{s-k^2}\left( X^{k(k+1)/2+\eps}\right) .
\end{align*}
Hence
$$U_{s+k}(X;\bfgra)\ll X^{s+k-k(k+1)/2+\eps},$$
and we find that $\Lam\le 0$. We are therefore at liberty to assume in what follows that 
$s=k^2$.\par

We may now suppose that $s=k^2$ and $\Lam\ge 0$, for otherwise there is nothing to 
prove. We begin by applying Lemma \ref{lemma4.4}. Thus, in view of (\ref{2.19}) and 
(\ref{2.21}), there exists an integer $h_{-1}\in \{0,1,2,3\}$ such that
$$\llbracket U(X)\rrbracket \ll (M^{h_{-1}})^{-s}\llbracket K_{0,1+h_{-1}}(X)\rrbracket .
$$
We therefore deduce from (\ref{2.22}) that
\begin{equation}\label{6.2}
X^\Lam \ll X^\del \llbracket U(X)\rrbracket \ll X^\del M^{-sh_{-1}}\llbracket 
K_{0,1+h_{-1}}(X)\rrbracket .
\end{equation}

\par Next we define the sequences $(h_n)$, $(a_n)$, $(b_n)$, $(c_n)$, $(\psi_n)$ and 
$(\gam_n)$ for $0\le n\le N$, in such a way that
\begin{equation}\label{6.3}
0\le h_{n-1}\le 12kb_{n-1},
\end{equation}
and
\begin{equation}\label{6.4}
X^\Lam M^{\Lam \psi_n}\ll X^{c_n\del}M^{-\gam_n}\llbracket K_{a_n,b_n}(X)
\rrbracket .
\end{equation}
Given a fixed choice for the sequence $(h_n)$, the remaining sequences are defined by 
means of the relations
\begin{equation}\label{6.5}
a_{n+1}=b_n,\quad b_{n+1}=kb_n+h_n,
\end{equation}
\begin{align}
c_{n+1}&=k(c_n+1),\label{6.6}\\
\psi_{n+1}&=k\psi_n+(k-1)b_n,\label{6.7}\\
\gam_{n+1}&=k\gam_n+sh_n.\label{6.8}
\end{align}
We put
$$a_0=0,\quad b_{-1}=1,\quad b_0=1+h_{-1},$$
$$\psi_0=0,\quad c_0=1,\quad \gam_0=sh_{-1},$$
so that both (\ref{6.3}) and (\ref{6.4}) hold with $n=0$ as a consequence of our initial 
choice of $h_{-1}$ together with (\ref{6.2}). We prove by induction that for each 
non-negative integer $n$ with $n<N$, the sequence $(h_m)_{m=-1}^n$ may be chosen 
in such a way that
\begin{equation}\label{6.9}
0\le a_n\le b_n\le (12k\tet)^{-1},\quad b_n\ge ka_n,
\end{equation}
\begin{equation}\label{6.10}
\psi_n\ge 0,\quad \gam_n\ge 0,\quad 0\le c_n\le (2\del)^{-1}\tet ,
\end{equation}
and so that (\ref{6.3}) and (\ref{6.4}) both hold with $n$ replaced by $n+1$.\par

Let $0\le n\le N$, and suppose that (\ref{6.3}) and (\ref{6.4}) both hold for the index 
$n$. We have already shown such to be the case for $n=0$. We observe first that the 
relation (\ref{6.5}) demonstrates that $b_m\ge ka_m$ for all $m$. Also, from (\ref{6.3}) 
and (\ref{6.5}) one finds that $b_{m+1}\le 13kb_m$, whence
$$b_n\le (13k)^nb_0\le 4(13k)^n.$$
Thus we see from (\ref{2.7}) that $b_n\le (12k\tet)^{-1}$. Also, from (\ref{6.6}), 
(\ref{6.7}) and (\ref{6.8}) we see that $c_n$, $\psi_n$ and $\gam_n$ are non-negative 
for each $n$. Further, we have
\begin{equation}\label{6.11}
c_m\le k^m+k\Bigl( \frac{k^m-1}{k-1}\Bigr)\le 3k^m\quad (m\ge 0).
\end{equation}
In order to bound $\gam_n$, we begin by noting from (\ref{6.5}) that
$$h_m=b_{m+1}-kb_m\quad \text{and}\quad a_m=b_{m-1}.$$
Then it follows from (\ref{6.8}) that
$$\gam_{m+1}-sb_{m+1}=k(\gam_m-sb_m).$$
By iterating this relation, we deduce that for $m\ge 1$, one has
\begin{equation}\label{6.12}
\gam_m=sb_m+k^m(\gam_0-sb_0)=s(b_m-k^m).
\end{equation}

\par We may now suppose that (\ref{6.4}), (\ref{6.9}) and (\ref{6.10}) hold for the index 
$n$. An application of Lemma \ref{lemma6.1} therefore reveals that there exists an 
integer $h_n$ satisfying (\ref{6.3}) with $n$ replaced by $n+1$, for which the upper 
bound (\ref{6.4}) holds, also with $n$ replaced by $n+1$. This completes the inductive 
step, so that (\ref{6.4}) is now known to hold for $0\le n\le N$.\par

\par We now exploit the bound (\ref{6.4}) with $n=N$. Since $b_N\le 4(13k)^N\le 
(3\tet)^{-1}$, one finds from Lemma \ref{lemma3.3} that
$$\llbracket K_{a_N,b_N}(X)\rrbracket \ll X^{\Lam+\del}(M^{b_N-b_{N-1}})^{
k(k+1)/2}.$$
By combining this bound with (\ref{6.4}) and (\ref{6.12}), we obtain the estimate
\begin{align*}
X^\Lam M^{\Lam \psi_N}&\ll X^{\Lam+(c_N+1)\del}M^{\frac{1}{2}k(k+1)
(b_N-b_{N-1})-\gam_N}\\
&\ll X^{\Lam+(c_N+1)\del}M^{sk^N-(s-k(k+1)/2)b_N-\frac{1}{2}k(k+1)b_{N-1}}\\
&\ll X^{\Lam+(c_N+1)\del }M^{sk^N}.
\end{align*}
Meanwhile, from (\ref{6.11}) and (\ref{2.7}) we have $X^{(c_N+1)\del}<M$. We 
therefore deduce that
\begin{equation}\label{6.14}
\Lam \psi_N\le sk^N+1.
\end{equation}
Next, recalling that $b_m\ge k^m$ for each $m$, we deduce from (\ref{6.7}) that
$$\psi_{n+1}\ge k\psi_n+(k-1)k^n\quad (0\le n<N),$$
whence $\psi_N\ge N(k-1)k^{N-1}$. We thus conclude from (\ref{6.14}) that
$$\Lam\le \frac{sk^N+1}{N(k-1)k^{N-1}}\le \frac{4s}{N}.$$
Since we are at liberty to take $N$ as large as we please in terms of $s$ and $k$, we are 
forced to conclude that $\Lam\le 0$. In view of the discussion in the initial paragraph of 
the proof, this completes the proof of the lemma.
\end{proof}

The proof of the first estimate of Theorem \ref{theorem1.1} now follows. For the 
conclusion of Lemma \ref{lemma6.2} implies that when $s\ge k^2$, the bound
$$U_{s+k}(X;\bfgra)\ll X^{s+k-k(k+1)/2+\Lam+\eps}$$
holds with $\Lam=0$, and any $\eps>0$. Thus we deduce from (\ref{2.2}) and 
(\ref{2.3}) that whenever $s\ge k(k+1)$, then
$$\oint |f_\bfgra(\bfalp;X)|^{2s}\d\bfalp \ll X^{s-k(k+1)/2+\eps}\biggl( 
\sum_{|n|\le X}|\gra_n|^2\biggr)^s.$$
It remains for us to establish the second ($\eps$-free) estimate claimed by Theorem 
\ref{theorem1.1} when $s>k(k+1)$. This task we defer to the next section.

\section{The Keil-Zhao device} We now describe a relatively cheap method of slightly 
sharpening the first estimate of Theorem \ref{theorem1.1} when excess variables are 
present. This is motivated by recent work of Lilu Zhao \cite[equation (3.10)]{Zha2014} 
and Keil \cite[page 608]{Kei2014}. Kevin Hughes has also established such an 
$\eps$-removal lemma by an alternate and earlier route that has priority in this topic 
(see \cite{Hug2015}, and also \cite{Hen2015} for independent work on this topic). We 
include this section partly to highlight the utility of the Keil-Zhao device, and also to present a 
relatively self-contained account of the proof of Theorem \ref{theorem1.1}. The first results 
of this type are due to Bourgain \cite{Bou1989, Bou1993a}, and concern quadratic problems.
\par

The basic approach utilises the Hardy-Littlewood method. We therefore begin with some 
infrastructure. Write $L=X^{1/(2k)}$. Then, when $1\le q\le L$, $1\le a_j\le q$ $(1\le j\le k)$ and 
$(q,a_1,\ldots ,a_k)=1$, define the major arc $\grM(q,\bfa)$ by
$$\grM(q,\bfa)=\{\bfalp\in [0,1)^k:|\alp_j-a_j/q|\le LX^{-j}\quad (1\le j\le k)\}.$$
The arcs $\grM(q,\bfa)$ are disjoint, as is easily verified. Let $\grM$ denote their union, 
and put $\grm=[0,1)^k\setminus \grM$.\par

Write
$$F(\bfalp;X)=\sum_{|n|\le X}e(\alp_1n+\ldots +\alp_kn^k).$$
Also, when $\bfalp\in \grM(q,\bfa)\subseteq \grM$, write
$$V(\bfalp;q,\bfa)=q^{-1}S(q,\bfa)I(\bfalp-\bfa/q;X),$$
where
$$S(q,\bfa)=\sum_{r=1}^qe((a_1r+\ldots +a_kr^k)/q)$$
and
$$I(\bfbet;X)=\int_{-X}^Xe(\bet_1\gam+\ldots +\bet_k\gam^k)\d\gam .$$
We then define the function $V(\bfalp)$ to be $V(\bfalp;q,\bfa)$ when $\bfalp \in 
\grM(q,\bfa)\subseteq \grM$, and to be zero otherwise.\par

We make use of two basic estimates. The first follows from the argument of 
\cite[\S9]{Woo2012} with only trivial modifications, and shows that
\begin{equation}\label{7.0}
\sup_{\bfalp\in \grm}|F(\bfalp;X)|\ll X^{1-\tau+\eps},
\end{equation}
where $\tau^{-1}=4k^2$. The second estimate we record in the shape of a lemma.

\begin{lemma}\label{lemma7.1} Suppose that $u>\tfrac{1}{2}k(k+1)+2$. Then one has
$$\int_\grM |F(\bfalp ;X)|^u\d\bfalp \ll_u X^{u-k(k+1)/2}.$$
\end{lemma}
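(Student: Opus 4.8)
The plan is to estimate $\int_\grM |F(\bfalp;X)|^u\d\bfalp$ by comparing $F(\bfalp;X)$ to the major arc approximation $V(\bfalp)$ and then integrating $|V(\bfalp)|^u$ over all of $[0,1)^k$. First I would recall the classical major arc estimate, in the form that for $\bfalp\in\grM(q,\bfa)$ one has $F(\bfalp;X)=V(\bfalp;q,\bfa)+O(L^2)$; this is the standard consequence of partial summation and the theory of exponential integrals (cf.\ Vaughan's book). Since $u$ is bounded in terms of $k$, the binomial expansion of $|F|^u$ in terms of $|V|^u$ and the error term reduces matters to bounding $\int_\grM |V(\bfalp)|^u\d\bfalp$ together with a contribution from the $O(L^2)$ error, the latter being crudely $\ll (\text{meas }\grM)\cdot X^{u-1}L^{2}\cdot(\text{something})$ which, since $\text{meas }\grM\ll L^{k+2}X^{-k(k+1)/2}$ and $L=X^{1/(2k)}$, is comfortably of size $O(X^{u-k(k+1)/2-\eta})$ for some $\eta>0$ provided $u$ is not too large — and the hypothesis $u>\frac12 k(k+1)+2$ actually forces the main term to dominate, so I need to be a little careful about which direction the error goes; in fact it is cleanest to bound $\int_\grM|F|^u$ directly by $\int_\grM|V|^u$ plus an error, using $|a+b|^u\ll|a|^u+|b|^u$.

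Second, I would evaluate $\int_{[0,1)^k}|V(\bfalp)|^u\d\bfalp$. By the disjointness of the arcs $\grM(q,\bfa)$ and the definition of $V$, this splits as a sum over $q\le L$ and reduced $\bfa$ of $q^{-u}|S(q,\bfa)|^u\int_{|\bet_j|\le LX^{-j}}|I(\bfbet;X)|^u\d\bfbet$. The integral over $\bfbet$ is handled by the standard substitution $\bet_j=X^{-j}\gam_j$, giving $X^{-k(k+1)/2}\int_{|\gam_j|\le L}|I(\gam_1,\ldots,\gam_k;1)X|^u \,d\bfgam$ after rescaling $I$ — more precisely $I(\bfbet;X)=XI(X\bet_1,X^2\bet_2,\ldots,X^k\bet_k;1)$ — so the $\bfbet$-integral equals $X^{u-k(k+1)/2}J(L)$ where $J(L)=\int_{[-L,L]^k}|I(\bfgam;1)|^u\d\bfgam$, and $J(L)$ converges to a finite limit as $L\to\infty$ precisely because $u>\frac12 k(k+1)+2$ (the oscillatory integral $I(\bfgam;1)$ decays like $|\gam|^{-1/k}$ in a suitable averaged sense, and the relevant absolutely convergent bound for $\int_{\dbR^k}|I(\bfgam;1)|^u\d\bfgam$ is exactly the classical singular integral convergence criterion). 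It remains to bound the singular series contribution $\sum_{q\le L}\sum_{\bfa}^{*}q^{-u}|S(q,\bfa)|^u$; using Weyl/Hua-type bounds $|S(q,\bfa)|\ll q^{1-1/k+\eps}$ one gets each $q$-block of size $\ll q^{k}\cdot q^{-u}\cdot q^{u(1-1/k)+\eps}=q^{k-u/k+\eps}$, which sums over all $q\ge 1$ provided $u/k>k+1$, i.e.\ $u>k(k+1)$ — and here I should double-check the exponent, since the hypothesis is only $u>\frac12k(k+1)+2$, so I would instead invoke the sharper average bound for $\sum_{q\le Q}q^{-u}\sum_{\bfa}^*|S(q,\bfa)|^u$ that is known to converge in the wider range, or alternatively absorb the singular series convergence into the same range as $J(L)$ via the integral representation.

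The main obstacle I expect is pinning down the precise range of $u$ for which the singular integral $J(\infty)=\int_{\dbR^k}|I(\bfgam;1)|^u\d\bfgam$ and the singular series $\sum_q q^{-u}\sum^*_\bfa|S(q,\bfa)|^u$ both converge, and checking that the threshold $\frac12k(k+1)+2$ stated in the lemma is correct (rather than something like $k(k+1)$). The cleanest route is probably to quote the relevant convergence results from the literature on the Hardy--Littlewood method for the Vinogradov system — e.g.\ the singular integral bound in Arkhipov--Chubarikov--Karatsuba or the discussion in \cite{Vau1997} — which give absolute convergence of both the singular integral and singular series in the stated range, after which the estimate $\int_\grM|F(\bfalp;X)|^u\d\bfalp\ll X^{u-k(k+1)/2}$ follows immediately by assembling the pieces above, the error term from the major arc approximation being of strictly smaller order.
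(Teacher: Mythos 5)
Your proposal follows essentially the same route as the paper: approximate $F$ by $V$ on each major arc with error $O(L^2)$ (Vaughan, Theorem 7.2), absorb the error via the measure of $\grM$, factor $\int_\grM|V|^u$ into a truncated singular series times a singular integral, and quote the classical convergence results (as in Arkhipov--Chubarikov--Karatsuba: singular integral for $u>\tfrac12k(k+1)+1$, singular series for $u>\tfrac12k(k+1)+2$), exactly as the paper does. Your only slips are cosmetic: the measure of $\grM$ is $\ll L^{2k+1}X^{-k(k+1)/2}$ rather than $L^{k+2}X^{-k(k+1)/2}$ (the error term still comes out $\ll X^{u-k(k+1)/2}$ since $L=X^{1/(2k)}$), and the pointwise Weyl bound for $S(q,\bfa)$ is indeed the wrong tool, with the averaged convergence result you fall back on being the correct and intended one.
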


\begin{proof} It follows from \cite[Theorem 7.2]{Vau1997} that when $\bfalp\in 
\grM(q,\bfa)\subseteq \grM$, one has
$$F(\bfalp;X)-V(\bfalp;q,\bfa)\ll q+X|q\alp_1-a_1|+\ldots +X^k|q\alp_k-a_k|\ll L^2.$$
Thus we obtain
\begin{equation}\label{7.1}
\int_\grM|F(\bfalp;X)|^u\d\bfalp \ll \int_\grM (L^2)^u\d\bfalp +\int_\grM 
|V(\bfalp)|^u\d\bfalp .
\end{equation}
But $\text{mes}(\grM)\ll L^{2k+1}X^{-k(k+1)/2}$, and so
\begin{equation}\label{7.1w1}
\int_\grM (L^2)^u\d\bfalp \ll L^{2u+2k+1}X^{-k(k+1)/2}\ll X^{u-k(k+1)/2}.
\end{equation}
Meanwhile, one finds that
\begin{equation}\label{7.1w2}
\int_\grM|V(\bfalp)|^u\d\bfalp =\grS\grJ,
\end{equation}
where
$$\grJ=\int_\grB|I(\bfbet;X)|^u\d\bfalp \quad \text{and}\quad 
\grS=\sum_{1\le q\le L}\sum_{\substack{1\le \bfa\le q\\ (q,a_1,\ldots ,a_k)=1}}|q^{-1}
S(q,\bfa)|^u,$$
in which we write
$$\grB=[-LX^{-1},LX^{-1}]\times \ldots \times [-LX^{-k},LX^{-k}].$$

\par Since \cite[Theorem 1.3]{AKC2004} shows that the singular integral
$$\int_{\dbR^k}|I(\bfbet;1)|^{2s}\d\bfbet$$
converges for $2s>\tfrac{1}{2}k(k+1)+1$, we find via two changes of variables that
$$\grJ\le X^{u-k(k+1)/2}\int_{\dbR^k}|I(\bfbet;1)|^u\d\bfbet \ll X^{u-k(k+1)/2}.$$
Also, by reference to \cite[Theorem 2.4]{AKC2004}, one sees that the singular series
$$\sum_{q=1}^\infty \sum_{\substack{1\le \bfa\le q\\ (q,a_1,\ldots ,a_k)=1}}|q^{-1}
S(q,\bfa)|^{2s}$$
converges for $2s>\tfrac{1}{2}k(k+1)+2$, and hence
$$\grS\le \sum_{q=1}^\infty \sum_{\substack{1\le \bfa\le q\\ (q,a_1,\ldots ,a_k)=1}}|q^{-1}
S(q,\bfa)|^u\ll 1.$$
On substituting these estimates into (\ref{7.1w2}), we deduce that
$$\int_\grM|V(\bfalp)|^u\d\bfalp \ll X^{u-k(k+1)/2},$$
and hence the conclusion of the lemma follows from (\ref{7.1}) and (\ref{7.1w1}).
\end{proof}

We are now equipped to apply the Keil-Zhao device.

\begin{lemma}\label{lemma7.2} Suppose that $w$ is a real number with 
$w\ge \tfrac{1}{2}k(k+1)$ for which one has the estimate
\begin{equation}\label{7.2}
\oint |\ftil_\bfgra (\bfalp ;X)|^{2w}\d\bfalp \ll X^{w-k(k+1)/2+\eps}.
\end{equation}
Then whenever $s>\max \{ w, \tfrac{1}{2}k(k+1)+2\}$, one has
$$\oint |\ftil_\bfgra (\bfalp ;X)|^{2s}\d\bfalp \ll X^{s-k(k+1)/2}.$$
\end{lemma}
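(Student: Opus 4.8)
The plan is to use the Keil--Zhao device, which converts a mean value estimate at exponent $2w$ into one at a higher exponent $2s$ with $\eps$ removed, by splitting the circle into major and minor arcs and treating each range separately. Write $F(\bfalp;X)$ for the unweighted exponential sum as in the statement, and recall the normalised sum $\ftil_\bfgra(\bfalp;X)$. Set $\grM$ and $\grm$ to be the major and minor arcs defined above, with $L=X^{1/(2k)}$. The first step is to bound the minor arc contribution: on $\grm$ one has the pointwise bound $|\ftil_\bfgra(\bfalp;X)|\ll X^{1/2}$ from (\ref{2.X}), but this alone is insufficient, so instead I would exploit the fact that the minor arc contribution to a mean value is controlled by a \emph{pointwise} bound that does see cancellation. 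Concretely, by splitting $\ftil_\bfgra$ via Cauchy--Schwarz against the unweighted sum $F$, one obtains on $\grm$ a bound of the shape $\sup_{\bfalp\in\grm}|\ftil_\bfgra(\bfalp;X)|\ll X^{-\tau/2+\eps}\cdot X^{1/2}$ for some fixed $\tau>0$ (here the exponent $\tau^{-1}=4k^2$ comes from (\ref{7.0})), since the weighted sum is, after Cauchy--Schwarz in $n$, comparable to $(\sum|\gra_n|^2)^{1/2}$ times a genuine unweighted exponential sum to which (\ref{7.0}) applies. Raising this to a high enough power and pairing it with the hypothesised estimate (\ref{7.2}) at exponent $2w$ then gives
$$\int_\grm |\ftil_\bfgra(\bfalp;X)|^{2s}\d\bfalp \ll \Bigl(\sup_{\bfalp\in\grm}|\ftil_\bfgra(\bfalp;X)|\Bigr)^{2s-2w}\oint |\ftil_\bfgra(\bfalp;X)|^{2w}\d\bfalp \ll X^{-(s-w)\tau+\eps}X^{w-k(k+1)/2+\eps},$$
and the power of $X$ saved is more than enough to absorb the two $X^\eps$ factors, so the minor arc contribution is $\ll X^{s-k(k+1)/2}$ with no $\eps$, provided $s-w$ exceeds a fixed multiple of $\eps^{-1}$... which is arranged since $\eps$ may be taken as small as we like relative to $s-w$.

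The second step is the major arc contribution, and here the key is Lemma \ref{lemma7.1}. On $\grM$ I would again Cauchy--Schwarz out the coefficients: writing $\gra_n$ with $\sum|\gra_n|^2=1$ after normalisation, one has $|\ftil_\bfgra(\bfalp;X)|^2 \le \sum_{|m|\le X}|\gra_m|^2 \cdot \bigl|\sum_{|n|\le X} e(\psi(n;\bfalp)-\psi(m;\bfalp))\bigr|$ — more precisely, expanding the square and using $|\gra_n\overline{\gra_m}|\le \tfrac12(|\gra_n|^2+|\gra_m|^2)$, one finds $|\ftil_\bfgra(\bfalp;X)|^2 \ll \max_{|m|\le X}|F(\bfalp^{(m)};X)|$ where $\bfalp^{(m)}$ is a translate of $\bfalp$ coming from the binomial shift $n\mapsto n+m$; since the major arcs are essentially translation-stable up to passing to a dilate, this reduces $\int_\grM |\ftil_\bfgra(\bfalp;X)|^{2s}\d\bfalp$ to $\int_{\grM'}|F(\bfbet;X)|^s\d\bfbet$ over a comparable major arc set $\grM'$, and Lemma \ref{lemma7.1} (applicable since $s>\tfrac12 k(k+1)+2$) gives the clean bound $\ll X^{s-k(k+1)/2}$. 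Combining the minor and major arc estimates yields $\oint|\ftil_\bfgra(\bfalp;X)|^{2s}\d\bfalp \ll X^{s-k(k+1)/2}$, as claimed.

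The main obstacle is the coefficient-removal step on the major arcs: one must be careful that the Cauchy--Schwarz trick genuinely produces a \emph{translate} of the unweighted sum whose major-arc mean value is still governed by Lemma \ref{lemma7.1}, rather than some twisted object. The binomial theorem identity — that the Diophantine system underlying these mean values is invariant under $n_i\mapsto n_i+c$ — is exactly what makes this work, but it needs to be applied at the level of the arc decomposition, checking that the translated major arcs $\grM(q,\bfa)+(\text{shift})$ still lie inside a major-arc set of the same shape. A cleaner route avoiding the translate issue is to observe directly that $|\ftil_\bfgra(\bfalp;X)|^{2s}$ is, by orthogonality and positivity of the reduced weights $|\gra_n|^2$, bounded by a sum of $\le X$ translates of $|F|^{2s}/X$ — but in any case the heart of the matter is that the weighted major-arc integral inherits the scaling $X^{s-k(k+1)/2}$ from the unweighted one. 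The minor arc step, by contrast, is essentially automatic once (\ref{7.0}) is in hand, since we have a fixed power of $X$ to spare.
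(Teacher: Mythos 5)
There is a genuine gap, and it sits exactly where you flag ``the main obstacle'': both of your coefficient-removal steps fail, and the failure is structural rather than technical. On the minor arcs, no Weyl-type pointwise bound for the weighted sum exists: Cauchy--Schwarz in $n$ gives only the trivial bound (\ref{2.X}), not a factorisation of $\ftil_\bfgra$ into $\bigl(\sum|\gra_n|^2\bigr)^{1/2}$ times an unweighted sum to which (\ref{7.0}) could be applied. Concretely, take $\gra_n=e(-\psi(n;\bftet))$ for a suitable fixed $\bftet\in\grm$; then $\ftil_\bfgra(\bfalp;X)=(2X+1)^{-1/2}F(\bfalp-\bftet;X)$, so $|\ftil_\bfgra(\bftet;X)|\gg X^{1/2}$, and moreover $\int_\grm|\ftil_\bfgra(\bfalp;X)|^{2s}\d\bfalp\gg X^{s-k(k+1)/2}$, since the whole bulk of the mean value then lives over $\grm$. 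Hence your claimed saving $X^{-(s-w)\tau}$ on the minor arcs is simply false for general weights. The major-arc step has the same defect: applying $|\gra_n{\overline \gra}_m|\le\tfrac12(|\gra_n|^2+|\gra_m|^2)$ forces you to take absolute values of the phases and returns only the trivial bound $|\ftil_\bfgra(\bfalp;X)|^2\ll X$, while if you instead keep the phases and substitute $n=m+h$, the inner sum over $h$ still carries the weights $\gra_{m+h}$, so no unweighted translate of $F$ ever appears and Lemma \ref{lemma7.1} cannot be brought to bear. The same example shows that the large values of $\ftil_\bfgra$ need not lie over $\grM$ at all, so a direct arc decomposition of the $\bfalp$-integral cannot be the right first move.

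The paper removes the weights at a different place. One decomposes by level sets of the weighted sum rather than by arcs: put $\grB=\{\bfalp:|\ftil_\bfgra(\bfalp;X)|>X^{1/2-\del}\}$, whose measure is $\ll X^{2w\del-k(k+1)/2+\eps}$ by (\ref{7.2}); on the complement one interpolates the pointwise bound $X^{1/2-\del}$ with (\ref{7.2}) to save a power $X^{-\del k\nu}$, where $s=w+k\nu$. For $\Ups=\int_\grB|\ftil_\bfgra(\bfalp;X)|^{2s}\d\bfalp$ one peels off a single factor $|\ftil_\bfgra(\bfalp;X)|^2$ as a sum over coefficient pairs $\bfn=(n_1,n_2)$ and applies Cauchy--Schwarz \emph{over $\bfn$}; since $\sum_\bfn|\gra_{n_1}{\overline \gra}_{n_2}|^2=\rho(\bfgra;X)^4$, the weights disappear at this point and one obtains
$$\Ups^2\le\int_\grB\int_\grB|F(\bfalp-\bfbet;X)^2\ftil_\bfgra(\bfalp;X)^{2s-2}\ftil_\bfgra(\bfbet;X)^{2s-2}|\d\bfalp\,\d\bfbet,$$
with the unweighted sum $F$ evaluated at the difference $\bfalp-\bfbet$. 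Only now is the arc dichotomy applied, and to $\bfalp-\bfbet$: when $\bfalp-\bfbet\in\grm$ one combines (\ref{7.0}) with the measure bound for $\grB$, and when $\bfalp-\bfbet\in\grM$ one applies H\"older and Lemma \ref{lemma7.1} to reach a bound of the shape $X^{2-k(k+1)/s}\Ups^{2-2/s}$, after which the resulting inequality is solved for $\Ups$. You name the Keil--Zhao device and the correct auxiliary inputs, but the mechanism you actually propose --- arc decomposition in $\bfalp$ together with pointwise or term-by-term coefficient removal --- does not yield these estimates and cannot be repaired without the level-set decomposition and the Cauchy--Schwarz over the Fourier coefficients.
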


\begin{proof} The hypothesis of the statement of the lemma permits us to assume that 
$s=w+k\nu$, for some $\nu>0$. Let $\del$ be a positive number sufficiently small in terms 
of $k$ and $\nu$, and put
$$\grB=\{ \bfalp\in [0,1)^k:|\ftil_\bfgra(\bfalp;X)|>X^{1/2-\del}\}.$$
Then the upper bound (\ref{7.2}) ensures that
\begin{equation}\label{7.3}
\text{mes}(\grB)\le \left( X^{1/2-\del}\right)^{-2w}\oint |\ftil_\bfgra (\bfalp;X)|^{2w}
\d\bfalp \ll X^{2w\del -k(k+1)/2+\eps}.
\end{equation}
Putting $\grb=[0,1)^k\setminus \grB$, it follows that
\begin{align}
\int_\grb |\ftil_\bfgra (\bfalp;X)|^{2s}\d\bfalp &\le \Bigl( \sup_{\bfalp \in \grb}
|\ftil_\bfgra (\bfalp;X)|\Bigr)^{2k\nu}\oint |\ftil_\bfgra (\bfalp;X)|^{2w}\d\bfalp \notag \\
&\ll (X^{1/2-\del})^{2k\nu}X^{w-k(k+1)/2+\eps}\notag \\
&\ll X^{s-k(k+1)/2-\del k\nu}.\label{7.4}
\end{align}

\par We next consider the mean value
$$\Ups=\int_\grB|\ftil_\bfgra (\bfalp ;X)|^{2s}\d\bfalp .$$
We first rewrite $\Ups$ in the form
$$\Ups=\sum_{|\bfn|\le X}\grc(\bfn)\int_\grB |\ftil_\bfgra(\bfalp;X)|^{2s-2}
e(\Psi(\bfn;\bfalp))\d\bfalp ,$$ 
in which
$$\grc(\bfn)=\rho(\bfgra;X)^{-2}\gra_{n_1}{\overline \gra}_{n_2}$$
and
$$\Psi(\bfn;\bfalp)=\psi(n_1;\bfalp)-\psi(n_2;\bfalp).$$
Applying Cauchy's inequality, we deduce that
$$\Ups^2\le \rho \sum_{|\bfn|\le X}\int_\grB \int_\grB |\ftil_\bfgra 
(\bfalp;X)\ftil_\bfgra (\bfbet;X)|^{2s-2}e(\Psi(\bfn;\bfalp-\bfbet))\d\bfalp \d\bfbet ,$$
where
$$\rho=\rho(\bfgra;X)^{-4}\biggl( \sum_{|n|\le X}|\gra_n|^2\biggr)^{2}=1.$$
Thus we obtain the relation
\begin{equation}\label{7.5}
\Ups^2\le \int_\grB \int_\grB |F(\bfalp-\bfbet;X)^{2}\ftil_\bfgra (\bfalp;X)^{2s-2}
\ftil_\bfgra (\bfbet;X)^{2s-2}|\d\bfalp \d\bfbet .
\end{equation}

\par Our first observation concerning the integral on the right hand side of (\ref{7.5}) 
concerns the set of points $(\bfalp,\bfbet)\in \grB\times \grB$ for which $\bfalp-\bfbet 
\in \grm$. On applying the bound (\ref{7.0}), we deduce via a trivial inequality for 
$\ftil_\bfgra (\bftet;X)$ that
\begin{align*}
\underset{\bfalp-\bfbet\in \grm}{\int_\grB \int_\grB}|F(\bfalp-\bfbet;X)^2
&\ftil_\bfgra(\bfalp;X)^{2s-2}\ftil_\bfgra(\bfbet;X)^{2s-2}|\d\bfalp \d\bfbet \\
&\le \Bigl( \sup_{\bfalp-\bfbet \in \grm}|F(\bfalp-\bfbet;X)|\Bigr)^2(X^{1/2})^{4s-4}
\int_\grB\int_\grB\d\bfalp\d\bfbet \\
&\ll (X^{1-\tau+\eps})^2X^{2s-2}\left( \text{mes}(\grB)\right)^2.
\end{align*}
On applying (\ref{7.3}), therefore, we infer that
\begin{align}
\underset{\bfalp-\bfbet\in \grm}{\int_\grB \int_\grB}|F(\bfalp-\bfbet;X)^2
\ftil_\bfgra(\bfalp;X)^{2s-2}&\ftil_\bfgra(\bfbet;X)^{2s-2}|\d\bfalp \d\bfbet \notag \\
&\ll X^{2s-2\tau+\eps}(X^{2w\del -k(k+1)/2+\eps})^2\notag \\
&\ll X^{2s-k(k+1)-\tau}.\label{7.6}
\end{align}

\par On the other hand, by applying the trivial inequality
$$|z_1\ldots z_r|\le |z_1|^r+\ldots +|z_r|^r,$$
it follows that
\begin{align*}
|\ftil_\bfgra(\bfalp;X)^{2s-2}\ftil_\bfgra(\bfbet;X)^{2s-2}|\ll &\, 
|\ftil_\bfgra (\bfalp;X)^{2s}\ftil_\bfgra (\bfbet;X)^{2s-4}|\\
&+|\ftil_\bfgra (\bfalp;X)^{2s-4}\ftil_\bfgra (\bfbet;X)^{2s}|.
\end{align*}
Then by symmetry, we obtain the bound
\begin{align*}
\underset{\bfalp-\bfbet\in \grM}{\int_\grB \int_\grB}&|F(\bfalp-\bfbet;X)^2
\ftil_\bfgra(\bfalp;X)^{2s-2}\ftil_\bfgra(\bfbet;X)^{2s-2}|\d\bfalp \d\bfbet \notag \\
&\ll \underset{\bfalp-\bfbet\in \grM}{\int_\grB \int_\grB}|F(\bfalp-\bfbet;X)^2
\ftil_\bfgra(\bfalp;X)^{2s-4}\ftil_\bfgra(\bfbet;X)^{2s}|\d\bfalp \d\bfbet .
\end{align*}
An application of H\"older's inequality therefore reveals that
\begin{equation}\label{7.7}
\underset{\bfalp-\bfbet\in \grM}{\int_\grB \int_\grB}|F(\bfalp-\bfbet;X)^2
\ftil_\bfgra(\bfalp;X)^{2s-2}\ftil_\bfgra(\bfbet;X)^{2s-2}|\d\bfalp \d\bfbet \ll 
I_1^{2/s}I_2^{1-2/s},
\end{equation}
where
$$I_1=\underset{\bfalp-\bfbet\in \grM}{\int_\grB \int_\grB}|F(\bfalp-\bfbet;X)^s
\ftil_\bfgra(\bfbet;X)^{2s}|\d\bfalp \d\bfbet $$
and
$$I_2=\underset{\bfalp-\bfbet\in \grM}{\int_\grB \int_\grB}|\ftil_\bfgra(\bfalp;X)^{2s}
\ftil_\bfgra(\bfbet;X)^{2s}|\d\bfalp \d\bfbet .$$

\par By Lemma \ref{lemma7.1}, since $s>\tfrac{1}{2}k(k+1)+2$, we see that
$$I_1\ll \biggl( \int_\grM |F(\bftet;X)|^s\d\bftet \biggr) \biggl( \int_\grB |\ftil_\bfgra 
(\bfbet;X)|^{2s}\d\bfbet \biggr) \ll X^{s-k(k+1)/2}\Ups .$$
On the other hand,
$$I_2\le \biggl( \int_\grB |\ftil_\bfgra (\bfalp;X)|^{2s}\d\bfalp \biggr) \biggl( \int_\grB 
|\ftil_\bfgra (\bfbet;X)|^{2s}\d\bfbet \biggr)=\Ups^2.$$
Thus, we infer from (\ref{7.7}) that
\begin{align}
\underset{\bfalp-\bfbet\in \grM}{\int_\grB \int_\grB}&|F(\bfalp-\bfbet;X)^2
\ftil_\bfgra(\bfalp;X)^{2s-2}\ftil_\bfgra(\bfbet;X)^{2s-2}|\d\bfalp \d\bfbet \notag \\
&\ll (X^{s-k(k+1)/2}\Ups)^{2/s}(\Ups^2)^{1-2/s}\notag \\
&\ll X^{2-k(k+1)/s}\Ups^{2-2/s}.\label{7.8}
\end{align}
Combining (\ref{7.5}) and (\ref{7.6}) with (\ref{7.8}), we deduce that
$$\Ups^2\ll X^{2s-k(k+1)-\tau}+X^{2-k(k+1)/s}\Ups^{2-2/s},$$
whence
$$\int_\grB|\ftil_\bfgra (\bfalp;X)|^{2s}\d\bfalp =\Ups\ll X^{s-k(k+1)/2}.$$
Finally, we combine the last bound with (\ref{7.4}), obtaining the estimate
\begin{align*}
\oint |\ftil_\bfgra (\bfalp;X)|^{2s}\d\bfalp &=\int_\grB |\ftil_\bfgra (\bfalp;X)|^{2s}\d\bfalp 
+\int_\grb |\ftil_\bfgra (\bfalp;X)|^{2s}\d\bfalp \\
&\ll X^{s-k(k+1)/2-\del k\nu}+X^{s-k(k+1)/2}\ll X^{s-k(k+1)/2}.
\end{align*}
This completes the proof of the lemma.
\end{proof}

Since the first conclusion of Theorem \ref{theorem1.1}, already established, confirms the validity 
of the hypothesis (\ref{7.2}) with $w=k(k+1)$, the second conclusion of Theorem 
\ref{theorem1.1} is immediate from Lemma \ref{lemma7.2}. Hence Theorem \ref{theorem1.1} 
has now been proved in full.

\section{Some consequences of Theorem \ref{theorem1.1}} We take some space in this 
section to provide brief accounts of the proofs of the two corollaries to Theorem 
\ref{theorem1.1}. This is all quite standard, and so we feel justified in economising on 
detail.

\begin{proof}[The proof of Corollary \ref{corollary1.2}] It follows from Theorem 
\ref{theorem1.1} that whenever $k\ge 2$, $p\ge 2k(k+1)$ and $\eps>0$, then for 
any complex sequence $(\gra_n)_{n\in \dbZ}$, one has
$$\oint |f_\bfgra (\bfalp;N)|^p\d\bfalp \ll N^{(p-k(k+1))/2+\eps}\biggl( \sum_{|n|\le N}
|\gra_n|^2\biggr)^{p/2}.$$
The estimate (\ref{1.3}) therefore follows on raising left and right hand sides of this 
relation to the power $1/p$, wherein
$$K_{p,N}\ll (N^{(p-k(k+1))/2+\eps})^{1/p}=N^{(1-\tet)/2+\eps}.$$
Here, one may take $\eps=0$ whenever $p>2k(k+1)$. The first claim of Corollary 
\ref{corollary1.2} has therefore been established.\par

We now address the bound for $A_{p,N}$. Let $g:(\dbR/\dbZ)^k\rightarrow \dbC$ have 
Fourier series defined as in (\ref{1.1}), and put
$$g_0(\bfalp)=\sum_{|n|\le N}\ghat(n,n^2,\ldots ,n^k)e(\alp_1n+\ldots +\alp_kn^k).$$
Then, by orthogonality in combination with H\"older's inequality, one has
\begin{align}
\sum_{|n|\le N}|\ghat(n,n^2,\ldots ,n^k)|^2&=\oint g_0(\bfalp)\gtil(-\bfalp)\d\bfalp 
\notag \\
&\le \biggl( \oint |g_0(\bfalp)|^p\d\bfalp \biggr)^{1/p}\biggl( \oint 
|\gtil(\bfalp)|^{p'}\d\bfalp \biggr)^{1/p'},\label{8.1}
\end{align}
in which we write $p'=p/(p-1)$. The first integral on the right hand side may be bounded 
via Theorem \ref{theorem1.1}, giving
$$\oint |g_0(\bfalp)|^p\d\bfalp \ll N^{(p-k(k+1))/2+\eps}\biggl( \sum_{|n|\le N}
|\ghat(n,n^2,\ldots ,n^k)|^2\biggr)^{p/2}.$$
On substituting this bound into (\ref{8.1}), we obtain the relation
$$\sum_{|n|\le N}|\ghat(n,n^2,\ldots ,n^k)|^2\ll N^{(1-\tet)/2+\eps}
\biggl( \sum_{|n|\le N}|\ghat(n,n^2,\ldots ,n^k)|^2\biggr)^{1/2}\|\gtil\|_{p'}.$$
On disentangling this relation, and noting that $\|\gtil\|_{p'}=\|g\|_{p'}$, we obtain the 
bound
$$\sum_{|n|\le N}|\ghat(n,n^2,\ldots ,n^k)|^2\ll A_{p,N}\|g\|_{p'}^2,$$
with $A_{p,N}\ll N^{1-\tet+\eps}$. The second conclusion of Corollary \ref{corollary1.2} 
follows, noting that one may take $\eps=0$ when $p>2k(k+1)$.
\end{proof}

\begin{proof}[The proof of Corollary \ref{corollary1.3}] Consider positive integers 
$k_1,\ldots ,k_t$ with $1\le k_1<k_2<\ldots <k_t=k$, and denote by $l_1,\ldots ,l_u$ 
those positive integers with $1\le l_1<l_2<\ldots <l_u<k$ for which
$$\{k_1,\ldots ,k_t\}\cup \{l_1,\ldots ,l_u\}=\{1,2,\ldots ,k\}.$$
Plainly, therefore, one has $u=k-t$, and moreover,
\begin{equation}\label{8.2}
\sum_{i=1}^ul_i=\sum_{j=1}^k j-\sum_{m=1}^tk_m=\tfrac{1}{2}k(k+1)-K.
\end{equation}
By orthogonality, the mean value
$$I=\oint \biggl| \sum_{|n|\le N}\gra_ne(\alp_1n^{k_1}+\ldots +\alp_tn^{k_t})
\biggr|^{2s}\d\bfalp $$
counts the number of integral solutions of the system of equations
$$\sum_{i=1}^s (x_i^{k_m}-y_i^{k_m})=0\quad (1\le m\le t),$$
with $|\bfx|,|\bfy|\le N$, with each solution counted with weight
\begin{equation}\label{8.3}
\prod_{i=1}^s\gra_{x_i}{\overline \gra}_{y_i}.
\end{equation}

\par Given any one solution $\bfx,\bfy$ counted by $I$, there exist integers $h_j$ 
$(1\le j\le k)$ for which
\begin{equation}\label{8.4}
\sum_{i=1}^s(x_i^j-y_i^j)=h_j\quad (1\le j\le k).
\end{equation}
Indeed, one has $h_j=0$ whenever $j=k_m$ for some suffix $m$. It is evident, 
moreover, that $|h_j|\le 2sN^j$ $(1\le j\le k)$.\par

When $\bfh\in \dbZ^k$, consider the integral
$$I(\bfh)=\oint |f_\bfgra (\bfbet;N)|^{2s}e(-h_1\bet_1-\ldots -h_k\bet_k)\d\bfbet .$$
By orthogonality, this integral counts the integral solutions of the system of equations 
(\ref{8.4}) with $|\bfx|,|\bfy|\le N$, and with each solution counted with weight 
(\ref{8.3}). Thus we see that
$$I=\sum_{|h_{l_1}|\le 2sN^{l_1}}\ldots \sum_{|h_{l_u}|\le 2sN^{l_u}}I(\bfh),$$
in which we put $h_j=0$ whenever $j=k_m$. By the triangle inequality, it therefore 
follows that
\begin{align*}
I&\le \sum_{|h_{l_1}|\le 2sN^{l_1}}\ldots \sum_{|h_{l_u}|\le 2sN^{l_u}}I({\mathbf 0})
\\
&\ll N^{l_1+\ldots +l_u}\oint |f_\bfgra (\bfbet ;N)|^{2s}\d\bfbet .
\end{align*}
Thus we deduce from (8.2) and Theorem \ref{theorem1.1} that when $s\ge k(k+1)$, 
one has
\begin{align*}
I&\ll N^{\frac{1}{2}k(k+1)-K}\left( N^{s-\frac{1}{2}k(k+1)+\eps}\biggl( 
\sum_{|n|\le N}|\gra_n|^2\biggr)^s\right) \\
&\ll N^{s-K+\eps}\biggl( \sum_{|n|\le N}|\gra_n|^2\biggr)^s .
\end{align*}
Here, one may take $\eps=0$ when $s>k(k+1)$. This completes the proof of Corollary 
\ref{corollary1.3}. 
\end{proof}

\section{Wider applications of weighted efficient congruencing} As we have already noted 
in the introduction, the ideas of this paper can be transported to deliver weighted variants 
of any mean value estimate established via efficient congruencing methods. Indeed, the 
basic \cite{FW2014, Woo2013} and multigrade variants of efficient congruencing 
introduced in \cite{Woo2014a,Woo2014b,Woo2014c} may all be modified to accommodate 
the weighted setting appropriate for restriction theory. Although this task is not especially 
easy, by adapting these methods one may establish the Main Conjecture (recorded above 
as Conjecture \ref{conjecture1.4}) for
$$1\le s\le \tfrac{1}{2}k(k+1)-(\tfrac{1}{3}+o(1))k\quad (\text{$k$ large})$$
and
$$s\ge k(k-1)\quad (k\ge 3).$$
Moreover, one may take $\eps=0$ when $s>k(k-1)$. The last result, in particular, confirms 
the Main Conjecture in full for $k=3$. As we have noted in the introduction, the associated 
arguments are of sufficient complexity that, were we to establish them in full as the main 
thrust of this paper, we would obscure the basic principles of the weighted efficient 
congruencing method. Instead, we intend to provide complete accounts of the proofs of 
these conclusions as special cases of more general results in subsequent papers.\par

Such ideas also extend to multidimensional settings. Consider, for example, a system of 
polynomials $\bfF=(F_1,\ldots ,F_r)$, with $F_i\in \dbZ[x_1,\ldots ,x_d]$ $(1\le i\le r)$. 
Following \cite[\S2]{PPW2013}, we say that $\bfF$ is translation-dilation invariant if:
\vskip.0cm
\noindent (i) the polynomials $F_1,\ldots ,F_r$ are each homogeneous of positive degree, 
and\vskip.0cm
\noindent (ii) there exist polynomials
$$c_{jl}\in \dbZ[\xi_1,\ldots ,\xi_d]\quad (\text{$1\le j\le r$ and $0\le l\le j$}),$$
with $c_{jj}=1$ for $1\le j\le r$, having the property that whenever $\bfxi\in \dbZ^d$, then
$$F_j(\bfx+\bfxi)=c_{j0}(\bfxi)+\sum_{l=1}^jc_{jl}(\bfxi)F_l(\bfx)\quad (1\le j\le r).$$
It follows that the system
\begin{equation}\label{9.1}
\sum_{i=1}^s(F_j(\bfx_i)-F_j(\bfy_i))=0\quad (1\le j\le r),
\end{equation}
possesses an integral solution $\bfx,\bfy$ if and only if, for each $\bfxi\in \dbZ^d$ and 
$\lam\in \dbZ\setminus\{0\}$, one has
$$\sum_{i=1}^s(F_j(\lam \bfx_i+\bfxi)-F_j(\lam \bfy_i+\bfxi))=0\quad (1\le j\le r),$$
whence the system (\ref{9.1}) is translation-dilation invariant. Such systems are easily 
generated by taking one or more seed polynomials $G(\bfx)$, and then appending to the 
system the successive partial derivatives with respect to each variable. Without loss, one 
may then consider only reduced systems $\bfF$ in which $F_1,\ldots ,F_r$ are linearly 
independent over $\dbQ$.\par

With the above system $\bfF$, we introduce some parameters in order to ease subsequent 
discussion. We refer to the number of variables $d=d(\bfF)$ in $\bfF$ as the 
{\it dimension} of the system, the number of forms $r=r(\bfF)$ comprising $\bfF$ as its 
{\it rank}, and we denote by $k_j=k_j(\bfF)$ the total degree of $F_j$. Finally, the 
{\it degree} $k=k(\bfF)$ of the system is defined by
$$k(\bfF)=\max_{1\le j\le r}k_j(\bfF),$$
and the {\it weight} $K=K(\bfF)$ of the system is
$$K(\bfF)=\sum_{j=1}^rk_j(\bfF).$$
Then by adapting the methods of this paper in a pedestrian manner within the arguments 
of \cite{PPW2013}, one may establish the following.

\begin{theorem}\label{theorem9.1} Let $\bfF$ be a reduced translation-dilation invariant 
system of polynomials having dimension $d$, rank $r$, degree $k$ and weight $K$. 
Suppose that $s$ is a natural number with $s\ge r(k+1)$. Then for each $\eps>0$, and 
any complex sequence $(\gra_\bfn)_{\bfn\in \dbZ^d}$, one has
$$\oint \biggl| \sum_{\bfn\in [-N,N]^d}\gra_\bfn e(\alp_1F_1(\bfn)+\ldots +\alp_rF_r(\bfn))
\biggr|^{2s}\d\bfalp \ll N^{sd-K+\eps}\biggl( \sum_{\bfn\in [-N,N]^d}|\gra_\bfn|^2
\biggr)^s.$$
Moreover, one may take $\eps=0$ when $s>r(k+1)$.
\end{theorem}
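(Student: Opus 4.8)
The plan is to run the weighted efficient congruencing argument of \S\S2--7 essentially verbatim, but within the affine-invariant framework of Parsell, Prendiville and Wooley \cite{PPW2013} in place of the classical Vinogradov system. The first step is normalisation: writing $\rho(\bfgra;N)=\bigl(\sum_\bfn|\gra_\bfn|^2\bigr)^{1/2}$ and $\ftil_\bfgra(\bfalp;N)=\rho(\bfgra;N)^{-1}\sum_{\bfn\in[-N,N]^d}\gra_\bfn e(\alp_1F_1(\bfn)+\ldots+\alp_rF_r(\bfn))$, the moment $U_{s,\bfF}(N;\bfgra)=\oint|\ftil_\bfgra(\bfalp;N)|^{2s}\d\bfalp$ is scale invariant in $(\gra_\bfn)$, so exactly as in \S2 one may restrict to positive weights $\gra_\bfn\in(0,1]$ and introduce a critical exponent $\Lam$ measuring the excess of $\log U_{s+r,\bfF}(X;\bfgra)$ over $\bigl((s+r)d-K\bigr)\log X$, divided by $\log X$. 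Here it is natural to take $s=ur$ with $u\ge k$, so the conditioning block has $r$ variables, the rank $r$ playing for the system $\bfF$ the role that $k$ played for the curve, while the degree $k$ re-enters only through the growth of moduli in the congruencing step. The target is $\Lam\le 0$ for $s\ge rk$; deducing the stated estimate from this, and reducing the general case $s\ge rk$ to the critical case $s=rk$ via the trivial bound $|\ftil_\bfgra(\bfalp;N)|\le 2N^{d/2}$, is then routine.

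Next I would introduce congruences to prime power moduli: fix a prime $\varpi$ with $M<\varpi\le 2M$, $M=X^\tet$, stratify the sum over $\bfn$ by residues modulo powers of $\varpi$ to obtain normalised sub-sums $\grf_c(\bfalp;\bfxi)$, and form ``well-conditioned'' $r$-fold products $\grF_c(\bfalp;\bfxi)$, a tuple $(\bfxi_1,\ldots,\bfxi_r)$ being admissible when the $\bfxi_i$ occupy distinct classes modulo $\varpi^{c+1}$ and are in general position relative to $(F_1,\ldots,F_r)$. One then defines $I_{a,b}(X)$, $K_{a,b}(X)$ and the normalised quantities $\llbracket I_{a,b}(X)\rrbracket$, $\llbracket K_{a,b}(X)\rrbracket$ by dividing by anticipated magnitudes as in \eqref{2.20}--\eqref{2.21}, with $rd$, $K$ and $sd$ in place of $k$, $k(k+1)/2$ and $s$. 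The lemmata of \S\S3--5 then transcribe: the Lemma~\ref{lemma3.1}-type upper bounds for sub-sum moments come from the latent translation-dilation invariance built into condition (ii) of the definition (the identity $F_j(\varpi^c\bfy+\bfxi)=c_{j0}(\bfxi)+\sum_l c_{jl}(\bfxi)F_l(\varpi^c\bfy)$ replacing the Binomial Theorem used there); the crude Lemma~\ref{lemma3.3} follows by H\"older; the conditioning Lemmata~\ref{lemma4.1}--\ref{lemma4.4} go through once one knows that among the $s$ underlying variables, no three of which share a class modulo $\varpi^{b+1}$, there are $r$ lying in distinct well-conditioned classes (which is guaranteed by $s\ge rk\ge 2r$ together with a combinatorial pigeonholing); and the congruencing Lemma~\ref{lemma5.2} reduces $\llbracket K_{a,b}(X)\rrbracket$ to $\llbracket I_{b,kb}(X)\rrbracket$ at the cost of a factor counting well-conditioned solutions of $\sum_{i=1}^r\bigl(F_j(\bfz_i)-F_j(\bfw_i)\bigr)\equiv 0\mmod{\varpi^{jb}}$ $(1\le j\le r)$, the modulus growth $\varpi^b\to\varpi^{kb}$ on the released block being governed by the degree $k$.

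The crux — and the step I expect to be the main obstacle — is the analogue of Lemma~\ref{lemma5.1}: a bound $\mathrm{card}\bigl(\calB_{a,b}(\bfm;\bfxi,\bfeta)\bigr)\ll\varpi^{C(a+b)}$ for a suitable $C=C(\bfF)$, equivalently the extraction of the congruence $\bfz_i\equiv\bfw_i$ to a large power of $\varpi$ from the available data. For the curve this is the Vinogradov-type count of \cite[Lemma 4.1]{Woo2012}; in the affine-invariant setting it must instead be derived from the non-singularity of the Jacobian of $(F_1,\ldots,F_r)$ on an appropriate $r$-dimensional slice, which is precisely the content of the non-singularity analysis of \cite[\S\S4--5]{PPW2013}. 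The general position constraint built into $\grF_c$ is designed exactly so that this Jacobian is a $\varpi$-adic unit and Hensel's lemma applies; reconciling that constraint with the weighted averaging — which, as in \S6, forces an extra layer of H\"older applications over residue classes — is where most of the technical labour sits. Granting this, the iteration of \S6 proceeds with no essential change beyond carrying $r$, $K$ and the degree $k$ through the exponent bookkeeping in place of $k$, $k(k+1)/2$ and $k$: one propagates a chain of inequalities $X^\Lam M^{\Lam\psi_n}\ll X^{c_n\del}M^{-\gam_n}\llbracket K_{a_n,b_n}(X)\rrbracket$ via a portable lemma in the style of Lemma~\ref{lemma6.1}, and after $N$ steps the trivial bound of Lemma~\ref{lemma3.3} forces $\Lam\le 4s/N$, hence $\Lam\le 0$. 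Finally, the $\eps$-removal for $s>r(k+1)$ follows the Keil-Zhao device of \S7: one requires a Weyl-type minor arc estimate $\sup_{\bfalp\in\grm}|F(\bfalp;X)|\ll X^{d-\tau+\eps}$ together with convergence of the associated singular series and singular integral for $2s>K+O(1)$, all of which are supplied by \cite{PPW2013}, that reference furnishing the analogues of \cite[Theorems 1.3 and 2.4]{AKC2004} invoked in Lemma~\ref{lemma7.1}.
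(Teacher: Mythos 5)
Your proposal is correct and follows essentially the route the paper itself intends: \S 9 offers no detailed argument, asserting only that one adapts the weighted efficient congruencing machinery of \S\S2--7 ``in a pedestrian manner'' within the framework of \cite{PPW2013}, which is precisely your plan, including the identification of the rank $r$ as the block size, the degree $k$ as governing the growth of moduli, the non-singularity/Hensel analysis of \cite{PPW2013} as the substitute for Lemma \ref{lemma5.1}, and the Keil--Zhao device for the $\eps$-removal. Your exponent bookkeeping ($s=ur$ with $u\ge k$, moments of order $2(s+r)$, target $\Lam\le 0$ for $s\ge rk$, expected magnitude $X^{(s+r)d-K}$) matches the intended generalisation, so no correction is needed.
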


The arguments required for the proof of Theorem \ref{theorem9.1} are straightforward 
analogues of those required in the case $d=1$ central to this paper, and involve none of the 
complications demanded by the multigrade efficient congruencing methods of 
\cite{Woo2014a, Woo2014b, Woo2014c}. We consequently propose to expand no further on 
this subject, leaving the reader to complete the routine exercises needed for its proof, 
and to apply \cite{PPW2013} as the necessary framework.

\bibliographystyle{amsbracket}
\providecommand{\bysame}{\leavevmode\hbox to3em{\hrulefill}\thinspace}

\end{document}